\tikzstyle{vertex} = [circle,draw=black,minimum size=2pt,inner sep=1pt] 
\tikzstyle{vertexSmall} = [circle,draw=black,minimum size=2pt,fill=white] 
\tikzstyle{edge} = [line width=1pt] 
\tikzstyle{arrow} = [edge,->] 
\tikzstyle{descent} = [red,dashed,line width=2pt,->] 
\newcommand{\N}{\mathbb{N}}
\newcommand{\Z}{\mathbb{Z}}
\newcommand{\lp}{\left(}
\newcommand{\rp}{\right)}
\newcommand{\sm}{\setminus}
\DeclareRobustCommand*{\ora}{\overrightarrow}  
\renewcommand{\phi}{\varphi}
\DeclareMathOperator{\des}{des}
\DeclareMathOperator{\inv}{inv}
\newtheorem{theorem}{Theorem}[section]
\newtheorem{cor}[theorem]{Corollary}
\newtheorem{conjecture}[theorem]{Conjecture}
\newtheorem{prop}[theorem]{Proposition}
\newtheorem{lemma}[theorem]{Lemma}
\newtheorem{question}[theorem]{Question}
\newtheorem{prob}[theorem]{Problem}
\newtheorem{claim}[theorem]{Claim}
\theoremstyle{definition}
\newtheorem{definition}[theorem]{Definition}
\newtheorem{remark}[theorem]{Remark}
\newcommand{\A}{A}
\newcommand{\floor}[1]{\lfloor #1 \rfloor}
\newcommand{\sig}{\sigma}
\newcommand{\sub}{\subseteq}
\DeclareMathOperator{\mult}{mult}
\newcommand{\X}{\widetilde{X}}
\newcommand{\maj}{\mathrm{maj}}
\begin{document}
\title{Eulerian Polynomials for Digraphs}
\author{Kyle Celano\footnote{Dept.\ of Mathematics, Wake Forest University \url{celanok@wfu.edu}} \and Nicholas Sieger\footnote{Dept.\ of Mathematics, University of California San Diego {\tt nsieger@ucsd.edu}}\and Sam Spiro\footnote{Dept.\ of Mathematics, Rutgers University {\tt sas703@scarletmail.rutgers.edu}. This material is based upon work supported by the National Science Foundation Mathematical Sciences Postdoctoral Research Fellowship under Grant No. DMS-2202730.}}
\date{\today}
\maketitle
\begin{abstract}
   Given an $n$-vertex digraph $D$ and a labeling $\sigma:V(D)\to [n]$, we say that an arc $u\to v$ of $D$ is a descent of $\sigma$ if $\sigma(u)>\sigma(v)$.   Foata and Zeilberger introduced a generating function $A_D(t)$ for labelings of $D$ weighted by descents, which simultaneously generalizes both Euleraian polynomials and Mahonian polynomials.  Motivated by work of Kalai, we look at problems related to $-1$ evaluations of $A_D(t)$.  In particular, we give a combinatorial interpretation of $|A_D(-1)|$ in terms of ``generalized alternating permutations'' whenever the underlying graph of $D$ is bipartite.
\end{abstract}

\section{Introduction}
Descents and inversions are two of the oldest and most well-studied \textit{permutation statistics} dating back to work of MacMahon 
\cite{macmahon2001combinatory,macmahon1913indices}. A \textit{descent} of a permutation $\sigma$ on the set $[n]:=\{1,2,\dots,n\}$ is an index $i\in [n-1]$ such that $\sigma(i)>\sigma(i+1)$, and an inversion is a pair  of integers $(i,j)$ with $1\leq i<j\leq n$ such that $\sigma(i)>\sigma(j)$. The number of descents and inversions of $\sigma$ are denoted by $\des(\sigma)$ and $\inv(\sigma)$, respectively. For example, if $\sigma=23154$ (written in \textit{one-line notation}, meaning $\sigma(1)=2,\ \sigma(2)=3\dots$), then $2$ and $4$ are descents whereas $(1,3),(2,3),(4,5)$ are inversions, so $\des(\sigma)=2$ and $\inv(\sigma)=3$.

If $\mathfrak{S}_n$ is the set of permutations of $[n]$, then the generating functions 
\[A_n(t)=\sum_{\sigma\in \mathfrak{S}_n}t^{\des(\sigma)}\quad M_n(t)=\sum_{\sigma\in \mathfrak{S}_n}t^{\inv(\sigma)}\]
are called the \textit{Eulerian} and \textit{Mahonian} polynomials respectively. Both of these polynomials are important objects of study in many branches of combinatorics and have been generalized in many different ways.  In this paper, we consider a polynomial due to Foata and Zeilberger~\cite{FOATA199679} which generalizes both the Eulerian and Mahonian polynomials via directed graphs.

A \textit{directed graph} or \textit{digraph} is a pair $D=(V,E)$ consisting of a finite set $V$ of \textit{vertices} and a subset $E\subset V\times V$ of \textit{(directed) edges} or \textit{arcs}. We will sometimes denote arcs $(u,v)\in E$ by $u\to v$ or even $uv$ where convenient. We further assume that $D$ has no loops i.e. no arcs of the form $(v,v)$. We do not allow multiple directed edges from one vertex to another, though it is easy to adapt our forthcoming definitions to accommodate this.

A \textit{permutation} of an $n$-vertex digraph $D=(V,E)$ is a bijection $\sigma:V\to [n]$. We will use the notation $\mathfrak{S}_D,\ \mathfrak{S}_V$, or $\mathfrak{S}_n$ to denote the set of permutations of $D$.  For a given directed graph $D=(V,E)$ and a permutation $\sigma$ of $D$, a \textit{$D$-descent} (or just \textit{descent} when $D$ is understood) is an arc $u\to v$ such that $\sigma(u)>\sigma(v)$. The total number of $D$-descents of a permutation $\sigma$ is denoted by $\des_D(\sigma)$; see \Cref{fig:basic example} for an example.

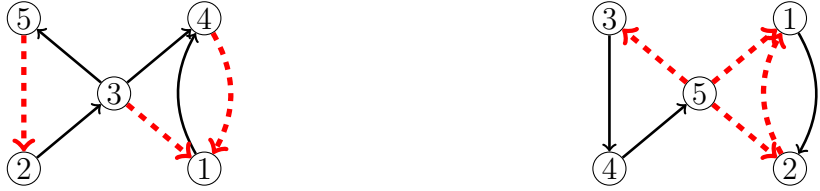
\begin{figure}[h]
	\centering
	\begin{subfigure}[b]{.5\textwidth}
		\centering
		\begin{tikzpicture}
			\node[vertex] (a) at (0,0) {$3$} ;
			\node[vertex] (b) at (1.2,1) {$4$} ;
			\node[vertex] (c) at (1.2,-1) {$1$} ;
			\node[vertex] (d) at (-1.2,1) {$5$} ;
			\node[vertex] (e) at (-1.2,-1) {$2$} ;
			
			\draw[arrow] (a) --(b);
			\draw[descent] (b) edge[bend left] (c);
			\draw[arrow] (c) edge[bend left] (b);
			\draw[descent] (a)--(c);
			\draw[arrow] (a)--(d);
			\draw[descent] (d) -- (e);
			\draw[arrow] (e) -- (a);
			
		\end{tikzpicture} 
	\end{subfigure}   
	~
	\begin{subfigure}[b]{.4\textwidth}
		\centering
		\begin{tikzpicture}
			\node[vertex] (a) at (0,0) {$5$} ;
			\node[vertex] (b) at (1.2,1) {$1$} ;
			\node[vertex] (c) at (1.2,-1) {$2$} ;
			\node[vertex] (d) at (-1.2,1) {$3$} ;
			\node[vertex] (e) at (-1.2,-1) {$4$} ;
			
			\draw[descent] (a) --(b);
			\draw[arrow] (b) edge[bend left] (c);
			\draw[descent] (c) edge[bend left] (b);
			\draw[descent] (a)--(c);
			\draw[descent] (a)--(d);
			\draw[arrow] (d) -- (e);
			\draw[arrow] (e) -- (a);

		\end{tikzpicture}
		\label{fig:basic example with digon}    
	\end{subfigure}  
	\caption{Two labelings $\pi:V(D)\to [5]$ where descent arcs are marked by red dashed lines.}
	\label{fig:basic example}
\end{figure}

\begin{remark}\label{eg:classic}We claim that the statistic $\des_D$ generalizes both $\des$ and $\inv$.  To see this, let $\ora{P}_n$ be the digraph with vertex set $[n]$ and with arcs $(i,i+1)$ for $i\in [n-1]$, and let $\ora{K}_n$ be the digraph with vertex set $[n]$ and with arcs $(i,j)$ for integers $1\leq i<j\leq n$. The reader can check that a \textit{$\ora{P}_n$-descent} is a \textit{descent} (in the classical meaning) and a \textit{$\ora{K}_n$-descent} is an inversion, and hence
\[\des_{\ora{P}_n}(\sigma)=\des(\sigma)\quad \des_{\ora{K}_n}(\sigma)=\inv(\sigma).\]
See \Cref{fig: desD is desinv} for an example.
\end{remark}

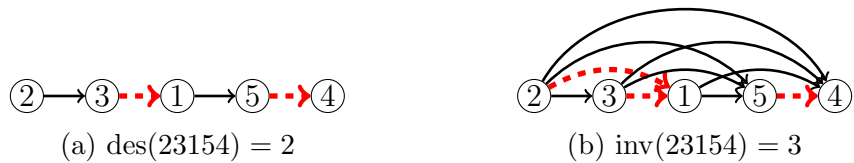
\begin{figure}[h]
    \centering
    \begin{subfigure}[b]{.4\textwidth}   
    \centering
    \begin{tikzpicture}

        \node[vertex] (a) at (0,0) {$2$} ;
        \node[vertex] (b) at (1,0) {$3$} ;
        \node[vertex] (c) at (2,0) {$1$} ;
        \node[vertex] (d) at (3,0) {$5$} ;
        \node[vertex] (e) at (4,0) {$4$} ;

        \draw[arrow] (a) --(b);
        \draw[descent] (b)--(c);
        \draw[arrow] (c)--(d);
        \draw[descent] (d) -- (e);

    \end{tikzpicture}
    \caption{ $\des(23154)=2$ }
    \label{fig:des Pn is des}    
    \end{subfigure}
    \begin{subfigure}[b]{.4\textwidth}
    \centering
    \begin{tikzpicture}
        \node[vertex] (a) at (0,0) {$2$} ;
        \node[vertex] (b) at (1,0) {$3$} ;
        \node[vertex] (c) at (2,0) {$1$} ;
        \node[vertex] (d) at (3,0) {$5$} ;
        \node[vertex] (e) at (4,0) {$4$} ;

        \draw[arrow] (a) --(b);
        \draw[descent] (a) edge[bend left] (c);
        \draw[arrow] (a) edge[bend left=45] (d);
        \draw[arrow] (a) edge[bend left=60] (e);
        \draw[descent] (b)--(c);
        \draw[arrow] (b) edge[bend left] (d);
        \draw[arrow] (b) edge[bend left=45] (e);
        \draw[arrow] (c)--(d);
        \draw[arrow] (c) edge[bend left] (e);
        \draw[descent] (d) -- (e);
        
    \end{tikzpicture}
    \caption{ $\inv(23154)=3$ }
    \label{fig:des Kn is inv}    
    \end{subfigure}    
    \caption{The Eulerian polynomial  $\A_D(t)$ captures both permutation descents and inversions.}
    \label{fig: desD is desinv}
\end{figure}
With all this in mind, we can now define the central object of study for this paper: the \textit{Eulerian polynomial} of a digraph $D=(V,E)$ is the generating function
\begin{equation}
    A_D(t)=\sum_{\sigma\in \mathfrak{S}_D}t^{\des_D(\sigma)}.
\end{equation}
In particular, the previous remark implies $A_{\ora{P_n}}(t)=A_n(t)$ and $A_{\ora{K_n}}(t)=M_n(t)$.

 \subsection{Main results}

The primary objective of this paper is to study evaluations of $A_D(t)$ at $-1$.   This is a problem in the area of \textit{combinatorial reciprocity}, which studies combinatorial polynomials evaluated at negative integers.  For example, the classical Eulerian and Mahonian polynomials both have good combinatorial interpretations for their evaluation at $-1$: the former being the number of \textit{alternating permutations} \cite{foata2005theorie} and the later being the number of \textit{correct proofs of the Riemann hypothesis}\footnote{As of the time of writing.}. Many more results on combinatorial reciprocity can be found in the book by Beck and Sanyal~\cite{beck2018combinatorial}.


In order to understand $A_D(-1)$ , we utilize the following key observation made by Kalai~\cite[Section 8.1]{KALAI2002412}.
\begin{prop}\label{prop:negOne}
    If $D,D'$ are orientations of the same graph $G$, then $|\A_D(-1)|=|\A_{D'}(-1)|$.
\end{prop}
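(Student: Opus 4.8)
The plan is to show that flipping a single arc of $D$ changes $\A_D(-1)$ only by a sign, since any two orientations of $G$ differ by a finite sequence of such single-arc flips. So let $D' $ be obtained from $D$ by reversing one arc, say replacing $u\to v$ with $v\to u$, and let us compare $\des_D(\sigma)$ with $\des_{D'}(\sigma)$ for a fixed permutation $\sigma\in\mathfrak{S}_D=\mathfrak{S}_{D'}$. Every arc other than the flipped one contributes identically to both descent counts, so $\des_{D'}(\sigma) = \des_D(\sigma) \pm 1$: if $\sigma(u)>\sigma(v)$ then $u\to v$ was a $D$-descent but $v\to u$ is not a $D'$-descent, so the count drops by $1$; if $\sigma(u)<\sigma(v)$ it goes up by $1$. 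In either case $\des_{D'}(\sigma) \equiv \des_D(\sigma) + 1 \pmod 2$, hence $(-1)^{\des_{D'}(\sigma)} = -(-1)^{\des_D(\sigma)}$.

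Summing this identity over all $\sigma\in\mathfrak{S}_D$ gives
\[
\A_{D'}(-1) = \sum_{\sigma}(-1)^{\des_{D'}(\sigma)} = -\sum_{\sigma}(-1)^{\des_D(\sigma)} = -\A_D(-1),
\]
so in particular $|\A_{D'}(-1)| = |\A_D(-1)|$.

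To finish, observe that if $D$ and $D'$ are two arbitrary orientations of the same graph $G$, one can pass from $D$ to $D'$ by reversing, one at a time, exactly those arcs on which the two orientations disagree; at each step the absolute value of the $-1$ evaluation is preserved by the computation above, so $|\A_D(-1)| = |\A_{D'}(-1)|$.

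I do not expect any serious obstacle here: the only thing to be careful about is that the set of permutations $\mathfrak{S}_D$ depends only on $V(D)$, not on the arcs, so reversing an arc does not change the index set of the sum — this is what makes the term-by-term sign comparison legitimate. (One could alternatively phrase the whole argument as a single involution-free generating-function identity $\A_{D'}(t) = $ "$t$ times $\A_D$ with $t\mapsto t^{-1}$ on the flipped coordinate," but the mod-$2$ bookkeeping above is cleaner at $t=-1$.)
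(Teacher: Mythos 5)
Your argument is correct and is exactly the paper's approach: reversing a single arc changes $\des_D(\sigma)$ by $\pm 1$ for every permutation $\sigma$, so $\A_{D'}(-1)=-\A_D(-1)$ for each single flip, and iterating over the arcs where the orientations disagree gives the result. The paper states this in one sentence; your write-up just fills in the same details.
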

This result follows from the fact that if any arc of $D$ is reversed, then the number of descents for any permutation $\sig\in \mathfrak{S}_V$ changes by exactly 1.  
With \Cref{prop:negOne} in mind, for any graph $G$ we can define
\[\nu(G):=|\A_D(-1)|,\]
where $D$ is any orientation of $G$.  The problem of studying $\nu(G)$ was first introduced by Kalai \cite{KALAI2002412} due to its relation with the Condorcet paradox in social choice theory, and a few basic properties of $\nu(G)$ were established by Even-Zohar \cite{Even-Zohar2017}.  Outside of this, nothing seems to be known about $\nu(G)$ despite Kalai raising the problem over 20 years ago.

In this paper, we prove three types of results related to $\nu(G)$: we give combinatorial interpretations for $\nu(G)$ for a large class of graphs $G$, we determine the maximum and minimum values achieved by $\nu(G)$ amongst $n$ vertex trees, and we consider the refined problem of determining the multiplicity of $-1$ as a root of $\A_D(t)$.

\subsubsection{Combinatorial Interpretations for $\nu(G)$}
A key observation towards understanding $\nu(G)$ is a result of Foata and Sch\"utzenberger~\cite{foata2005theorie} (see also \cite[Exercise 135]{stanley_2011}) which states that the Eulerian polynomial $A_n(t)$ evaluated at $t=-1$ is equal (up to sign) to the number of alternating permutations of size $n$, i.e.\ the number of permutations $\sig$ with $n$ odd and $\sig(1)<\sig(2)>\sig(3)<\cdots >\sig(n)$.  Because $A_n(t)=A_{\ora{P}_n}(t)$ for $\ora{P}_n$ the directed path, this result implies $\nu(P_n)$ is equal to the number of alternating permutations of size $n$. 

Given this observation, it is natural to expect $\nu(G)$ to count ``alternating permutations for graphs'' for some generalized notion of alternating permutations.  There are many such generalizations one could consider, for example, one could force every maximal path of $G$ to be an alternating permutation.  However, it turns out that the definition we will want to consider is the following (non-obvious) generalization.

\begin{definition}
    Given an $n$-vertex graph $G$, we say that an ordering $\pi=(\pi_1,\ldots,\pi_n)$ of the vertex set $V(G)$ is an \textit{even sequence} if each of the subgraphs $G[\pi_1,\ldots,\pi_i]$ induced by the first $i$ vertices of $\pi$ have an even number of edges for all $1\le i\le n$.  We let $\eta(G)$ denote the number of even sequences of $G$. 
\end{definition}

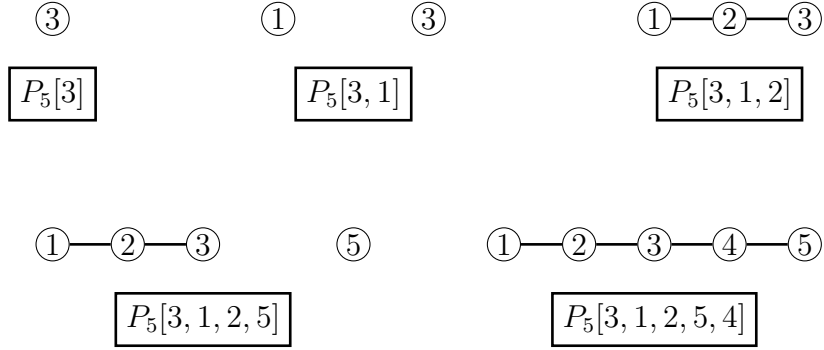
\begin{figure}[h]
    \centering
    \begin{tikzpicture}
        \coordinate (base1) at (-5,3);
        \coordinate (base2) at (0,3);
        \coordinate (base3) at (5,3);
        \coordinate (base4) at (-3,0);
        \coordinate (base5) at (3,0);

        \coordinate (p1) at (-2,0);
        \coordinate (p2) at (-1,0);
        \coordinate (p3) at (0,0);
        \coordinate (p4) at (1,0);
        \coordinate (p5) at (2,0);
        \coordinate (label) at (0,-1);
        
        \node[vertex] (13) at ($(base1) + (p3)$) {$3$};
        \node[draw,line width=1pt] at($(base1) + (p3) + (label)$) {$P_5[3]$};

        \node[vertex] (23) at ($(base2) + (p3)$) {$3$};
        \node[vertex] (21) at ($(base2) + (p1)$) {$1$};
        \node[draw,line width=1pt] at($(base2) + 0.5*(p1) + 0.5*(p3) + (label)$) {$P_5[3,1]$};
        
        \node[vertex] (33) at ($(base3) + (p3)$) {$3$};
        \node[vertex] (31) at ($(base3) + (p1)$) {$1$};
        \node[vertex] (32) at ($(base3) + (p2)$) {$2$};
        \draw[edge] (33) --(32) --(31);
        
        \node[draw,line width=1pt] at($(base3) + (p2) + (label)$) {$P_5[3,1,2]$};

        \node[vertex] (43) at ($(base4) + (p3)$) {$3$};
        \node[vertex] (41) at ($(base4) + (p1)$) {$1$};
        \node[vertex] (42) at ($(base4) + (p2)$) {$2$};
        \node[vertex] (44) at ($(base4) + (p5)$) {$5$};
        \draw[edge] (43) --(42) --(41);
        
        \node[draw,line width=1pt] at($(base4) + 0.5*(p1) + 0.5*(p5) + (label)$) {$P_5[3,1,2,5]$};

        \node[vertex] (53) at ($(base5) + (p3)$) {$3$};
        \node[vertex] (51) at ($(base5) + (p1)$) {$1$};
        \node[vertex] (52) at ($(base5) + (p2)$) {$2$};
        \node[vertex] (54) at ($(base5) + (p4)$) {$4$};
        \node[vertex] (55) at ($(base5) + (p5)$) {$5$};
        \draw[edge] (55) -- (54) -- (53) --(52) --(51);
        
        \node[draw,line width=1pt] at($(base5) + (label)$) {$P_5[3,1,2,5,4]$};
    \end{tikzpicture}

    \caption{A depiction of the induced subgraphs $P_5[\pi_1,\ldots,\pi_i]$ for the ordering $\pi=(3,1,2,5,4)$ of the path graph $P_5$.  Note that $\pi$ is an even sequence since each of these induced subgraphs have an even number of edges.  We also observe that $\pi^{-1}=(2,3,1,5,4)$ is an alternating permutation; see \Cref{prop:path}.} 
    \label{fig:evenEg}
\end{figure}

For example, \Cref{fig:evenEg} depicts an even sequence for the path graph on 5 vertices with vertex set $[5]$.  While not immediate, once can verify that even sequences for the path graph $P_n$ with vertex set $[n]$ are exactly inverses of alternating permutations of size $n$ (see \Cref{prop:path}); so $\nu(P_n)=\eta(P_n)$ in this case. Our main result shows that this equality holds for a substantially larger class of graphs.  

To state this result, we remind the reader that a graph is \textit{complete multipartite} if one can partition its vertices into sets $V_1,\ldots,V_r$ such that $u$ and $v$ are adjacent if and only if $u\in V_i,v\in V_j$ for some $i\ne j$.  We say that a graph is a \textit{blowup of a cycle} if one can partition its vertices into sets $V_1,\ldots,V_r$ such that $u$ and $v$ are adjacent if and only if $u\in V_i$ and $v\in V_{i+1}$ for some $i$ (with the indices written mod $r$).  With these definitions in mind, we have the following.
\begin{theorem}\label{thm:nuEta}
    If $G$ is a graph which is either bipartite, complete multipartite, or a blowup of a cycle, then $\nu(G)=\eta(G)$.
\end{theorem}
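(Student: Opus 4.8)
\emph{Plan.} I would run two parallel recursions, one for $\nu$ and one for $\eta$, and then choose orientations so that the recursions match term by term. Write $m=|E(G)|$. The first observation is that both quantities vanish when $m$ is odd: for $\eta$ this is immediate, since $G$ itself is one of the prefixes $G[\pi_1,\dots,\pi_n]$ and has an odd number of edges; for $\nu$, fix any orientation $D$ and pair each $\sigma\in\mathfrak{S}_D$ with $\bar\sigma$ defined by $\bar\sigma(v)=n+1-\sigma(v)$, noting that an arc $u\to v$ is a descent of exactly one of $\sigma,\bar\sigma$, so $\des_D(\bar\sigma)=m-\des_D(\sigma)$; when $m$ is odd this is a fixed-point-free sign-reversing involution, so $A_D(-1)=0$. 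Hence from now on we may assume $m$ is even.

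\emph{The two recursions.} Removing the last vertex of an even sequence gives $\eta(G)=\sum_{v\in V(G)}\eta(G-v)$, because the only new induced subgraph to check is $G$ itself, which now has an even number of edges; moreover $\eta(G-v)=0$ whenever $\deg_G(v)$ is odd. On the other side, placing the largest label $n$ at a vertex $v$ creates a descent along each out-arc of $v$ and along no in-arc of $v$, so $A_D(t)=\sum_{v}t^{d^+(v)}A_{D-v}(t)$ where $d^+(v)$ is the out-degree of $v$ in $D$, and hence $A_D(-1)=\sum_v(-1)^{d^+(v)}A_{D-v}(-1)$. Inductively $|A_{D-v}(-1)|=\nu(G-v)=\eta(G-v)$ whenever $G-v$ lies in our class, so matching the two recursions reduces to producing an orientation $D$ of $G$ with two properties: (i) every even-degree vertex of $G$ has even out-degree in $D$, so that every surviving coefficient $(-1)^{d^+(v)}$ equals $+1$; and (ii) the signs are consistent, i.e.\ $A_{D-v}(-1)=+\eta(G-v)$ for each $v$ with $\eta(G-v)>0$.

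\emph{Bipartite graphs.} Here conditions (i) and (ii) are easily met: orient every edge from $X$ to $Y$. Then each vertex of $Y$ has out-degree $0$ and each vertex of $X$ has out-degree equal to its degree, so (i) holds; and since this rule is hereditary (the restriction to $G-v$ is again ``all edges from the $X$-side to the $Y$-side''), a routine induction on $|V(G)|$ gives $A_D(-1)=\sum_{v:\,\deg_G(v)\text{ even}}A_{D-v}(-1)=\sum_v\eta(G-v)=\eta(G)$, whence $\nu(G)=|A_D(-1)|=\eta(G)$. Note this already covers complete bipartite graphs and blowups of even cycles, so the remaining content of the theorem is complete multipartite graphs with at least three parts and blowups of odd cycles.

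\emph{Complete multipartite graphs and blowups of cycles.} These are blowups of $K_r$ and of $C_r$ respectively, so it is natural to look for $D$ that is constant on each bundle of edges between two parts $V_i,V_j$, i.e.\ induced by an orientation of the quotient graph. Condition (i) then becomes a congruence condition linking the parities of the part sizes $|V_i|$ to the out-degrees of the quotient orientation, and a short case analysis on the parity of $n$ and of the $|V_i|$ shows that (since $m$ is even) a valid choice always exists: for complete multipartite graphs one pushes all odd-size parts to one side of a linear order on the parts when there are few of them and uses a suitable near-regular tournament on the parts when there are many, the only potential obstruction being a global parity condition that one verifies holds automatically once $m$ is even; the cycle case is handled analogously, using that a blowup of an even cycle is bipartite. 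The crux is condition (ii): unlike the bipartite orientation, these orientations are \emph{not} hereditary, since deleting a vertex flips the parity of a part size and can move $G-v$ into a different case of the construction. To repair this I would track signs quantitatively — using that reversing $k$ arcs multiplies $A_D(-1)$ by $(-1)^k$ (the computation behind \Cref{prop:negOne}), so that it suffices to show the orientations chosen for $G$ and for $G-v$ differ in an even number of arcs — or else strengthen the inductive hypothesis to carry along a compatible family of orientations, or replace the induction by a direct transfer-matrix evaluation of $A_D(-1)$ for the ``quotient-linear'' orientation. I expect this sign bookkeeping to be where the real work lies.
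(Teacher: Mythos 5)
Your overall strategy is the same as the paper's: both quantities vanish when $e(G)$ is odd, both satisfy the recurrence $\sum_v(\cdot)(G-v)$ when $e(G)$ is even, and the work is to find an orientation making the signed recurrence $A_D(-1)=\sum_v(-1)^{d^+(v)}A_{D-v}(-1)$ collapse to the unsigned one. Your bipartite case is complete and is essentially the paper's \Cref{lem:bipartiteOrientation} and \Cref{cor:nuBipartite}. The gap is that for complete multipartite graphs and odd-cycle blowups you explicitly defer the ``sign bookkeeping,'' and that deferred step is exactly where the remaining content of the theorem lives; as written, those two cases are not proved.

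Concretely, two ingredients are missing. First, for complete multipartite graphs your condition (i) \emph{fails} for the natural linear orientation (all arcs from $V_i$ to $V_j$, $i<j$): a vertex $v\in V_i$ can have even degree but odd out-degree when some other part has odd size. The paper rescues this not by finding a better orientation but by first proving (its \Cref{lem:multipartiteOdd}) that a complete multipartite graph with at least two odd parts has $\nu=\eta=0$ — shown via $\nu\le\eta$ and a direct combinatorial argument that no even sequence exists — so that every term with the wrong sign is multiplied by $A_{D-v}(-1)=0$. Without that vanishing lemma your case analysis on part sizes cannot close. Second, for blowups of odd cycles the sign genuinely cannot be made uniformly $+1$: the paper's \Cref{lem:cycleOrientation} shows the correct statement is $(-1)^{m(D)/2}A_D(-1)\ge 0$, where $m(D)$ counts consecutive pairs of odd parts, and the recurrence carries the twist $(-1)^{m(D)/2}A_D(-1)=\sum_v(-1)^{m(D-v)/2}A_{D-v}(-1)$; taking absolute values then recovers $\nu(G)=\sum_v\nu(G-v)$. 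So your condition (ii) in the form $A_{D-v}(-1)=+\eta(G-v)$ is asking for the wrong invariant in the cycle case. (A minor point: the natural orientations here \emph{are} hereditary — deleting a vertex just shrinks a part — so the obstruction is not hereditariness but the sign $(-1)^{m(D)/2}$ and the need for the vanishing lemma.)
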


While the above shows $\nu(G)=\eta(G)$ for a large class of graphs $G$, equality does not hold in general.  In fact, the following result shows that the family of graphs from \Cref{thm:nuEta} is essentially the largest hereditary family of graphs for which equality holds.
\begin{theorem}\label{thm:induced}
    If $G$ is a connected graph such that $\nu(G')=\eta(G')$ for all induced subgraphs $G'\sub G$, then $G$ is either bipartite, complete multipartite, or a blowup of a cycle.
\end{theorem}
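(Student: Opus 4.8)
The strategy is a forbidden-induced-subgraph argument. For an odd integer $\ell\ge 3$, let $C_\ell^+$ denote the graph obtained from a cycle $C_\ell$ by adding one new vertex joined to a single vertex of the cycle (so $C_3^+$ is the paw). I would prove two statements: \textbf{(i)} if $G$ is connected and is neither bipartite, complete multipartite, nor a blowup of a cycle, then $G$ has some $C_\ell^+$ as an induced subgraph; and \textbf{(ii)} $\nu(C_\ell^+)\ne\eta(C_\ell^+)$ for every odd $\ell\ge 3$. These yield the theorem at once: if $\nu(G')=\eta(G')$ for all induced $G'\sub G$ then by (ii) no $C_\ell^+$ is an induced subgraph of $G$, so by (i) $G$ lies in one of the three families. (Together with \Cref{thm:nuEta} and the fact that each of the three families is closed under taking induced subgraphs, this also shows $\{C_\ell^+:\ell\text{ odd}\}$ is exactly the list of minimal obstructions.)

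For (i), first suppose $G$ has a triangle. Since $G$ is connected, contains a triangle, and is not complete multipartite, Olariu's characterization of paw-free graphs (every component of a paw-free graph is triangle-free or complete multipartite) shows $G$ is not paw-free, so $G$ has an induced paw $\cong C_3^+$. Otherwise $G$ is triangle-free; being non-bipartite it has finite odd girth $\ell\ge 5$, and every shortest odd cycle is chordless. Fix a shortest odd cycle $C=v_1\cdots v_\ell$. Using triangle-freeness together with the minimality of $\ell$ (a chord, or a ``chord-path'' through an outside vertex, would cut $C$ into a shorter odd cycle and an even one), one checks that every $u\notin V(C)$ has $|N(u)\cap V(C)|\le 2$, with equality only when the two neighbours are at distance $2$ on $C$. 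There are now three possibilities. If some vertex has exactly one neighbour on some shortest odd cycle $C'$, then $G[V(C')\cup\{u\}]\cong C_\ell^+$. If some vertex $u$ has \emph{no} neighbour on $C$, then choosing $u$ at minimum distance to $C$ forces that distance to be $2$, and a neighbour $w$ of $u$ is attached to a distance-$2$ pair $\{v_{j-1},v_{j+1}\}$ of $C$; replacing the subpath $v_{j-1}v_jv_{j+1}$ of $C$ by $v_{j-1}wv_{j+1}$ yields another shortest odd cycle on which $u$ has a single neighbour, reducing to the previous case. In the remaining case every vertex has a distance-$2$ pair of $C$ as its neighbourhood in $C$, so $V(G)$ partitions into $V_1,\dots,V_\ell$ with $v_i\in V_i$ and $N(u)\cap V(C)=\{v_{i-1},v_{i+1}\}$ for all $u\in V_i$; triangle-freeness makes each $V_i$ independent, and the minimality of $\ell$ makes $V_i$ and $V_j$ non-adjacent whenever $i,j$ are non-consecutive mod $\ell$. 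Then $G$ is precisely the blowup of $C_\ell$ with parts $V_1,\dots,V_\ell$ unless some edge $uu'$ with $u\in V_i$, $u'\in V_{i+1}$ is absent, and in that case deleting $v_{i+1}$ from $G[V(C)\cup\{u,u'\}]$ leaves exactly a $C_\ell^+$ (the cycle $u'v_{i+2}v_{i+3}\cdots v_iu'$ with the pendant vertex $u$ attached at $v_{i-1}$). So in every case $G$ contains an induced $C_\ell^+$.

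For (ii), I would evaluate both sides directly. Conditioning an even sequence on its last vertex gives the recursion: $\eta(G)=\sum_{v\in V(G)}\eta(G-v)$ if $|E(G)|$ is even and $\eta(G)=0$ otherwise. All summands being nonnegative, and taking $v$ to be a cycle-vertex of $C_\ell^+$ other than the one carrying the pendant (so that $C_\ell^+-v\cong P_\ell$, the $\ell$-vertex path), we get $\eta(C_\ell^+)\ge\eta(P_\ell)$, which is the number of alternating permutations of $[\ell]$ and hence positive since $\ell$ is odd. For $\nu$, orient $C_\ell^+$ as the directed cycle $\ora{C}_\ell$ together with the arc from $v_1$ to the pendant vertex $u$, and call this $D$. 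Conditioning on $\sigma(u)=k$ and standardizing the values on the cycle to a permutation $\rho\in\mathfrak S_\ell$ (the number of descents along the cycle depends only on the relative order of the values, and $[\sigma(v_1)>k]$ equals $[\rho(v_1)\ge k]$), then summing over $k\in[\ell+1]$ using $\sum_{k=1}^{\ell+1}(-1)^{[\rho(v_1)\ge k]}=\ell+1-2\rho(v_1)$, yields
\[A_D(-1)=\sum_{\rho\in\mathfrak S_\ell}(-1)^{\des_{\ora{C}_\ell}(\rho)}\bigl(\ell+1-2\rho(v_1)\bigr)=-2\sum_{\rho\in\mathfrak S_\ell}(-1)^{\des_{\ora{C}_\ell}(\rho)}\rho(v_1),\]
since $A_{\ora{C}_\ell}(-1)=0$ (one has $\des_{D'}(\sigma)+\des_{{D'}^{\mathrm{rev}}}(\sigma)=|E|$ for any orientation $D'$, so reversing all $\ell$ arcs of $\ora{C}_\ell$, which gives an isomorphic digraph, multiplies the $t=-1$ evaluation by $(-1)^\ell=-1$). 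Finally, the rotation automorphism of $\ora{C}_\ell$ preserves $\des_{\ora{C}_\ell}$, so $\sum_\rho(-1)^{\des_{\ora{C}_\ell}(\rho)}\rho(v_m)$ is the same for every $m$; averaging over $m$ makes it $\tfrac1\ell\binom{\ell+1}{2}A_{\ora{C}_\ell}(-1)=0$. Hence $\nu(C_\ell^+)=|A_D(-1)|=0\ne\eta(C_\ell^+)$.

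The main obstacle is the triangle-free half of (i): turning ``$G$ is not a blowup of $C_\ell$'' into an actual induced $C_\ell^+$ requires the local casework sketched above — bounding the neighbourhood of an outside vertex on a shortest odd cycle, the cycle-rerouting trick for vertices far from $C$, and the one-vertex deletion that recognises $C_\ell^+$ inside the bad configuration — while keeping the cyclic index arithmetic honest (in particular handling the small cases $\ell=5,7$, where various indices among $v_{i-1},\dots,v_{i+2}$ can coincide). Once the pendant-deletion identity and the rotational symmetry of $\ora{C}_\ell$ are in place, part (ii) is short.
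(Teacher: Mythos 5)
Your proposal is correct, and its top-level architecture is exactly the paper's: identify the odd pans ($C_\ell^+$, which the paper calls $C_{2k+1}^*$) as the obstructions, prove a structure theorem saying that a connected odd-pan-free graph is bipartite, complete multipartite, or a blowup of a cycle, and show $\nu\ne\eta$ on each odd pan. The interesting differences are in how you discharge the two halves. For the structure theorem, the paper handles the triangle case from scratch (taking a maximal induced complete $r$-partite subgraph containing a maximum clique and showing a vertex outside it creates a paw or contradicts maximality), whereas you invoke Olariu's characterization of paw-free graphs; in the triangle-free case the paper grows a maximal induced blowup of a shortest odd cycle, while you partition all of $V(G)$ by neighborhoods on a fixed shortest odd cycle — both require the same local casework (two neighbors forced to be at distance $2$, the rerouting trick for distant vertices), and yours checks out. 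For the pan lemma your argument is genuinely different and arguably slicker: the paper exhibits an explicit even sequence for $\eta(C_\ell^+)>0$ and kills $\nu$ with a sign-reversing involution on a ``bidirected-path-around-the-cycle'' orientation, while you get $\eta(C_\ell^+)\ge\eta(P_\ell)>0$ from the deletion recurrence (valid since $e(C_\ell^+)=\ell+1$ is even) and compute $A_D(-1)=0$ on the directed-cycle-plus-pendant orientation by conditioning on $\sigma(u)$, standardizing, and exploiting both $A_{\ora{C}_\ell}(-1)=0$ and the rotational symmetry $\sum_\rho(-1)^{\des}\rho(v_m)$ being independent of $m$. I verified the arithmetic in that computation ($\sum_k(-1)^{[\rho(v_1)\ge k]}=\ell+1-2\rho(v_1)$ and the averaging step) and it is sound; what your route buys is an algebraic, involution-free proof of $\nu(C_\ell^+)=0$, at the cost of leaning on an external theorem (Olariu) in the triangle case.
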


Our proof of \Cref{thm:induced} relies on a structural graph theory result which may be of independent interest; see \Cref{def:pans} and \Cref{prop:structure} for a precise statement. 

While we do not have a full understanding of $\nu(G)$ for arbitrary graphs, we are able to prove several other results regarding $\nu(G)$, such as the general bound $\nu(G)\le \eta(G)$. 
 We refer the reader to \Cref{prop:nuGeneral}  and \Cref{cor:nuGeneral} for a full list of these results.

\subsubsection{Extremal Values of $\nu(G)$ and $\eta(G)$}
We next turn to the extremal problem of studying the largest and smallest possible values of $\nu(G)$ and $\eta(G)$. For arbitrary $n$-vertex graphs this is an uninteresting problem, since  $\nu(\overline{K_n})=\eta(\overline{K_n})=n!$ and $\nu(K_n)=\eta(K_n)=0$ for $n\ge 2$  are easily seen to achieve the maximum and minimum possible values.  However, this problem becomes non-trivial when one looks at smaller classes of graphs.  To this end, we consider these extremal problems for trees.

To state our result, we recall that a tree is a \textit{star} $K_{1,n}$ if there is a single-non leaf vertex.  We say that a tree is a  \textit{hairbrush} if it consists of a path $v_0v_1\cdots v_n$ such that each vertex $v_i$ with $i\ge 1$ is adjacent to a leaf $u_i$.  That is, hairbrushes are ``comb graphs'' with an extra vertex $v_0$ attached at the end; see \Cref{fig: the hairbrush c2}.
\begin{figure}[h]
    \centering
    \begin{subfigure}[a]{.4\textwidth}
    \centering
    \begin{tikzpicture}
    \node[vertex] (v0) at (0,0) {$v_0$};
    \node[vertex] (v1) at (1,0) {$v_1$};
    \node[vertex] (v2) at (2,0) {$v_2$};
    \node[vertex] (v3) at (3,0) {$v_3$};
    \node[vertex] (u1) at (1,-1) {$u_1$};
    \node[vertex] (u2) at (2,-1) {$u_2$};
    \node[vertex] (u3) at (3,-1) {$u_3$};
    \draw[edge] (v0)--(v1)--(v2)--(v3);
    \draw[edge] (v1)--(u1);
    \draw[edge] (v2)--(u2);
    \draw[edge] (v3)--(u3);
    \end{tikzpicture}
    \caption{The hairbrush $H_3$}
    \label{fig: the hairbrush c2}
\end{subfigure}
\begin{subfigure}[a]{.4\textwidth}
    \centering
    \begin{tikzpicture}
    \node[vertex] (v0) at (0,1) {$v_0$};
    \node[vertex] (v1) at (-2.5,0) {$v_1$};
    \node[vertex] (v2) at (-1.5,0) {$v_2$};
    \node[vertex] (v3) at (-0.5,0) {$v_3$};
    \node[vertex] (v4) at (0.5,0) {$v_4$};
    \node[vertex] (v5) at (1.5,0) {$v_5$};
    \node[vertex] (v6) at (2.5,0) {$v_6$};
    \draw[edge] (v0)--(v1);
    \draw[edge] (v0)--(v2);
    \draw[edge] (v0)--(v3);
    \draw[edge] (v0)--(v4);
    \draw[edge] (v0)--(v5);
    \draw[edge] (v0)--(v6);
    \end{tikzpicture}
    \caption{The Star $K_{1,6}$}
    \label{fig: the star}
\end{subfigure}

    \label{fig:extremal trees}
\end{figure}

\begin{theorem}\label{thm:etaTree}
    If $T$ is a tree on $2n+1$ vertices, then
    \[n! 2^n\le \nu(T)=\eta(T)\le (2n)!\]
    Moreover, equality holds in the lower bound if and only if $T$ is a hairbrush, and equality holds in the upper bound if and only if $T$ is a star.
\end{theorem}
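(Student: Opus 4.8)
The plan is to work entirely with $\eta$: trees are bipartite, so \Cref{thm:nuEta} gives $\nu(T)=\eta(T)$, and it remains to prove the two inequalities on $\eta(T)$ together with their equality cases. The starting point is a reformulation. Viewing an ordering $\pi$ as a sequence of insertions, $\pi$ is an even sequence of $G$ if and only if each vertex, when inserted, has an even number of neighbours already present; equivalently, orienting every edge from its earlier to its later endpoint in $\pi$ produces an orientation in which every vertex has even in-degree. For a forest such an orientation exists if and only if each component has an even number of edges, and then it is \emph{unique}: after rooting each component, the defining linear system over $\mathbb{F}_2$ (whose coefficient matrix is the incidence matrix) becomes lower triangular with unit diagonal once vertices and their parent-edges are listed leaves-first. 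Hence $\eta(G)$ equals the number of linear extensions of this single orientation $O_G$ when it exists, and is $0$ otherwise. Two tools follow: conditioning on the last vertex of an even sequence gives the deletion recursion
\[\eta(G)=\sum_{\substack{v\in V(G)\\ \deg_G(v)\text{ even}}}\eta(G-v)\qquad\text{whenever }|E(G)|\text{ is even,}\]
and adjoining an isolated vertex to an $m$-vertex graph multiplies $\eta$ by $m+1$. With the base case $\eta(\overline{K_m})=m!$ these yield $\eta(K_{1,2n})=(2n)!$ and, via $H_n-v_n=H_{n-1}\sqcup P_1$, the identity $\eta(H_n)=2^n n!$.

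For the upper bound I would prove a single statement about all forests by induction on $|V(F)|$: $\eta(F)\le |V(F)|!/(|E(F)|+1)$, with equality if and only if $F$ is the disjoint union of isolated vertices and at most one star $K_{1,2k}$. Taking $F$ connected (so $|E(F)|=|V(F)|-1$) gives $\eta(T)\le(2n)!$ with equality exactly for the star. Plugging the inductive hypothesis for each $F-v$ into the deletion recursion reduces the inductive step to the elementary inequality
\[\sum_{\substack{v\in V(F)\\ \deg_F(v)\text{ even}}}\frac{1}{|E(F)|-\deg_F(v)+1}\ \le\ \frac{|V(F)|}{|E(F)|+1},\]
which holds for forests (rewrite it using $|E(F)|=|V(F)|-c$ and compare the contribution of the odd-degree vertices with that of the vertices of large even degree); tracking equality throughout isolates the stated extremal forests.

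For the lower bound I would aim for the companion statement: if $\eta(F)\neq 0$ then $\eta(F)\ge 2^{\lfloor |V(F)|/2\rfloor}\lfloor |V(F)|/2\rfloor!$, with equality if and only if $F=H_n$ (when $|V(F)|=2n+1$) or $F=H_{n-1}\sqcup P_1$ (when $|V(F)|=2n$). The tree bound $\eta(T)\ge n!2^n$ is then immediate, and equality forces $O_T$ to have a \emph{unique} sink $s$ (any further sink $s'$ would contribute a second term $\eta(T-s')\ge n!2^n$ to the recursion) with $\eta(T-s)$ attaining the minimum, so $T-s=H_{n-1}\sqcup P_1$; analysing how $s$ can be reattached — it must be joined to the path-end $v_{n-1}$, since any other choice leaves $v_{n-1}$ a second sink — forces $T=H_n$. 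The main obstacle is the induction underlying this forest statement. A direct use of the deletion recursion is too lossy, because deleting an even-degree vertex can create many new isolated vertices and $2\,f(m-1)$ falls well short of $f(m):=2^{\lfloor m/2\rfloor}\lfloor m/2\rfloor!$ for even $m$; I expect the right fix is to prove the bound first for forests with no isolated vertices (deducing the general case from the isolated-vertex multiplication rule), where the deletion recursion produces a sum of terms each boosted by a factorial reflecting the leaf-neighbours of the deleted vertex, so that the numerics close. The accompanying equality analysis, which must single out the hairbrush uniquely at every level of the recursion, is where most of the care will be needed.
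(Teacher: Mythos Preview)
Your overall framework—reduce to $\eta$ via bipartiteness, then induct through the deletion recursion—is the same as the paper's, and your linear-extension reformulation is correct (though neither you nor the paper ends up using it). The substantive divergence is in which vertices you sum over and what structure you extract. You restrict to even-degree vertices; the paper restricts further to
\[
\X(T)=\{x\in V(T):\text{every component of }T-x\text{ has an even number of edges}\}.
\]
The extra terms you keep are all zero, so the recursions agree numerically, but the paper's smaller index set carries usable structure: no vertex of $\X$ is a leaf, no two are adjacent, and if the components of $T-x$ have sizes $n_1,\dots,n_r$ with $\ell_i$ elements of $\X$ in the $i$th one, then $n_i\ge 2\ell_i+1$. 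This structural lemma is what replaces both of your unproven inequalities.

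For the upper bound the paper chains
\[
\nu(T-x)\le\frac{(2n)!}{\prod n_i}\le\frac{(2n)!}{\prod(2\ell_i+1)}\le\frac{(2n)!}{2|\X|-1},
\]
so $\nu(T)\le\frac{|\X|}{2|\X|-1}(2n)!\le(2n)!$, with equality forcing $|\X|=1$ and every $n_i=1$. Your route—prove $\eta(F)\le|V(F)|!/(|E(F)|+1)$ for all forests via the inequality $\sum_{\deg v\text{ even}}1/(e-\deg v+1)\le m/(e+1)$—does work, but the inequality is not the one-liner you suggest: it needs a convexity/extremal argument on the degree sequence, and it is tight for \emph{paths} as well as stars, so the equality characterisation must also pass through the inductive bounds on each $\eta(F-v)$ (e.g.\ $\eta(P_5-v_3)=0$ is what makes $\eta(P_5)$ strict). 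That is all doable, but it is more than you have written.

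For the lower bound your admitted gap is exactly the step the paper handles differently. Because every $x\in\X$ has $T-x$ a disjoint union of \emph{trees} on $2k_i+1$ vertices, the paper never needs a general forest statement: it proves the arithmetic lemma
\[
\prod_{i=1}^{r}\frac{k_i!}{2(2k_i+1)!}\ \ge\ \frac{(n-1)!}{2(2n-1)!}\qquad\bigl(r\text{ even},\ n=\tfrac r2+\textstyle\sum k_i\bigr),
\]
with equality iff $r=2$ and $\{k_1,k_2\}=\{n-1,0\}$, and combines it with the multinomial formula and the inductive hypothesis on each $T_i$ to get $\nu(T-x)\ge n!2^n$ directly. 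Then $\nu(T)\ge|\X|\cdot n!2^n\ge n!2^n$ (using $\X\ne\emptyset$, proved by a separate short induction), and equality forces $|\X|=1$ and $T-x=K_1\sqcup H_{n-1}$; your final reattachment argument for $x$ then matches the paper's. This sidesteps entirely the ``isolated-vertex boosting'' difficulty you identified.
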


Note that the equality $\nu(T)=\eta(T)$ follows from \Cref{thm:nuEta}, and that $\nu(T)=\eta(T)=0$ if $T$ has an even number of vertices (since $e(T)$ is odd), which is why \Cref{thm:etaTree} only considers trees with an odd number of vertices.


\subsubsection{Multiplicity of Roots}
Lastly, we consider the problem of determining the multiplicity of $-1$ as a root of $A_D(t)$, and we denote this quantity by $\mult(A_D(t),-1)$.  Note that studying $\mult(A_D(t),-1)$ is related to studying $\nu(G)=|A_D(-1)|$ in the sense that a graph $G$ has $\nu(G)=0$ if and only if $\mult(A_D(t),-1)>0$ for every orientation $D$ of $G$

One of the first questions one might ask in this setting is to determine how large $\mult(A_D(t),-1)$ can be amongst $n$-vertex digraphs.  Trivially, $\mult(A_D(t),-1)\le e(D)$ (since the degree of $A_D(t)$ is at most $e(D)$), which implies $\mult(A_D(t),-1)\le {n\choose 2}$ if $D$ has $n$ vertices.  We prove a substantially stronger  upper bound which turns out to be sharp.
\begin{theorem}\label{thm root multiplicity upper bound}
    If $D$ is an $n$-vertex digraph, then \[\mult(A_D(t),-1)\le n-s_2(n),\] where $s_2(n)$ denotes the number of 1's in the binary expansion of $n$.  Moreover, for all $n$, there exist $n$-vertex digraphs $D$ with $A_D(t)=\frac{n!}{2^{n-s_2(n)}}(1+t)^{n-s_2(n)}$.
\end{theorem}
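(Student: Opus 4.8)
The statement has two halves: an upper bound, which follows from an arithmetic observation, and a matching construction, which is the real content.

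\textbf{Upper bound.} The plan is to use only that $A_D(t)\in\Z[t]$ (it is a sum of monomials $t^{\des_D(\sigma)}$) and that $A_D(1)=|\mathfrak S_D|=n!$. Set $k=\mult(A_D(t),-1)$, so $A_D(t)=(1+t)^k g(t)$ with $g(-1)\ne0$; dividing a polynomial of $\Z[t]$ by the monic polynomial $(1+t)^k$ keeps us in $\Z[t]$, so $g\in\Z[t]$ and $g(1)\in\Z$. Evaluating at $t=1$ gives $n!=2^k g(1)$, so $2^k\mid n!$. By Legendre's formula the exponent of $2$ in $n!$ is $\sum_{i\ge1}\lfloor n/2^i\rfloor=n-s_2(n)$, hence $k\le n-s_2(n)$.

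\textbf{Two operations.} For the construction I would first record how $A_D$ behaves under two operations. (i) If $D=D_1\sqcup D_2$ with $|V(D_i)|=n_i$, then a labeling of $D$ is a choice of which $n_1$ labels are assigned to $D_1$ together with a labeling of each $D_i$ (via relative order), and descents in the two blocks are independent; so $A_D(t)=\binom{n_1+n_2}{n_1}A_{D_1}(t)A_{D_2}(t)$, and for several blocks one gets a multinomial coefficient times the product of the $A_{D_i}(t)$. (ii) Suppose $\phi$ is an automorphism of $D$ with $\phi(u)=v$ and $\phi(v)=u$ and $u\to v\notin E(D)$. Write $A_D(t)=P(t)+Q(t)$, where $P$ sums $t^{\des_D(\sigma)}$ over labelings with $\sigma(u)<\sigma(v)$ and $Q$ over those with $\sigma(u)>\sigma(v)$. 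The map $\sigma\mapsto\sigma\circ\phi$ is a bijection of $\mathfrak S_D$ that preserves $\des_D$ (an automorphism permutes the arc set) and exchanges the two classes, so $P=Q$. Since the new arc $u\to v$ is a descent of $\sigma$ exactly when $\sigma(u)>\sigma(v)$,
\[A_{D\cup\{u\to v\}}(t)=P(t)+t\,Q(t)=\tfrac{1+t}{2}\,A_D(t).\]

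\textbf{The recursive gadget and assembly.} Now define digraphs $G_k$ on $2^k$ vertices: $G_0=K_1$, and for $k\ge1$ let $G_k$ consist of two disjoint copies of $G_{k-1}$, with canonical isomorphism $\iota$ between them, plus the single arc $w\to\iota(w)$ for one chosen vertex $w$. The automorphism of $G_{k-1}\sqcup G_{k-1}$ that swaps the two copies plays the role of $\phi$ in (ii), and the new arc joins the two copies so it is not already present; hence (i), (ii), and induction give $A_{G_k}(t)=\frac{(2^k)!}{2^{2^k-1}}(1+t)^{2^k-1}$. Finally, for arbitrary $n$ with binary expansion $n=2^{a_1}+\cdots+2^{a_r}$ (so $r=s_2(n)$), put $D=G_{a_1}\sqcup\cdots\sqcup G_{a_r}$; then by (i) the factorials cancel and
\[A_D(t)=\frac{n!}{\prod_i(2^{a_i})!}\cdot\prod_i\frac{(2^{a_i})!}{2^{2^{a_i}-1}}(1+t)^{2^{a_i}-1}=\frac{n!}{2^{n-s_2(n)}}(1+t)^{n-s_2(n)},\]
which is the desired digraph and shows the upper bound is attained.

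\textbf{Main obstacle.} The upper bound and the disjoint-union accounting are routine; the one real idea is operation (ii) — that an arc between two vertices swapped by an automorphism contributes a clean factor $\tfrac{1+t}{2}$ — together with the observation that iterating it along a binary doubling tree installs exactly $n-s_2(n)$ copies of $(1+t)$. The only point needing care is that the required automorphism survives each step of the recursion, which it does because at step $k$ one only ever uses the copy-swap of $G_{k-1}\sqcup G_{k-1}$.
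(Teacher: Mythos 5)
Your proof is correct, and its overall skeleton matches the paper's: the upper bound via $2^m\mid n!$ plus Legendre's formula, gadgets on $2^k$ vertices contributing $(2^k)!\left(\frac{1+t}{2}\right)^{2^k-1}$, and a disjoint union assembled according to the binary expansion of $n$. The one place you genuinely diverge is in how the gadget is justified. The paper builds its gadgets $L_m$ as iterated rooted products $L_{m+1}=L_m\circ_{v_1}P_2$ and invokes a general formula $A_{D_1\circ_v D_2}(t)=\frac{1}{m!}\binom{mn}{n,\ldots,n}A_{D_1}(t)A_{D_2}(t)^m$ for rooted product digraphs, whose proof is a somewhat involved $m!$-to-$1$ counting argument. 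You instead prove a clean local lemma: if $D$ admits an automorphism swapping $u$ and $v$ with $u\to v\notin E(D)$, then adding the arc $u\to v$ multiplies $A_D(t)$ by $\frac{1+t}{2}$ (via the descent-preserving bijection $\sigma\mapsto\sigma\circ\phi$, which exchanges the classes $\sigma(u)<\sigma(v)$ and $\sigma(u)>\sigma(v)$), and you apply it to a doubling construction $G_k$ consisting of two copies of $G_{k-1}$ joined by a single arc between a vertex and its mirror image. Your lemma is more elementary and self-contained for this purpose, and it isolates exactly why each new arc contributes a factor of $(1+t)$; the paper's rooted-product machinery is heavier here but is reused elsewhere (e.g.\ for $\nu$ of rooted product graphs), which is presumably why the authors route the construction through it. Also, your replacement of Gauss's lemma by division by the monic polynomial $(1+t)^k$ in the upper bound is a minor but valid simplification.
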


We also obtain a general lower bound on $\mult(A_D(t),-1)$.
\begin{prop}\label{prop:matching}
    Let $D$ be an orientation of an $n$-vertex graph $G$.  If every matching in the complement of $G$ has size at most $m$, then  $\mult(\A_D(t),-1) \geq \floor{\frac{n}{2}} - m$.
\end{prop}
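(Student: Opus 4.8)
The plan is to reduce to a convenient orientation, then peel off join and disjoint‑union structure, leaving a core case where both $G$ and $\bar G$ are connected.

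\textbf{Step 1: reduce the orientation away.} I would first check that $\mathrm{mult}(A_D(t),-1)$ depends only on the graph $G$ and not on the orientation $D$ (so we may write $\mu(G)$ for it and use whichever orientation is convenient). This refines \Cref{prop:negOne}: if $D'$ is $D$ with a single arc $u\to v$ reversed and we split $\mathfrak S_D$ according to the sign of $\sigma(u)-\sigma(v)$ into $A_D=P+Q$, then $A_{D'}=t^{-1}P+tQ$ with $t\mid P$, so $A_D-tA_{D'}=(1-t^2)Q$ and $A_{D'}-tA_D=t^{-1}(1-t^2)P$; comparing the orders of vanishing at $-1$ of the two sides forces $\mathrm{ord}_{-1}A_D=\mathrm{ord}_{-1}A_{D'}$. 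I would also note that we may assume $m$ is exactly the matching number of $\bar G$, since replacing $m$ by a smaller value only strengthens the statement.

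\textbf{Step 2: the join / union factorizations and the induction.} The workhorses are two identities obtained by grouping the permutations of $V(G)$ by the order‑isomorphism types of the restrictions $\sigma|_{V(G_i)}$ and by the subset $\sigma(V(G_1))\subseteq[n]$. If $G=G_1+G_2$ is a nontrivial join with $|V(G_i)|=n_i$ and we orient all $n_1n_2$ cross‑arcs from $V(G_1)$ to $V(G_2)$, then the cross‑descent count equals the inversion number of the associated shuffle, and summing over shuffles gives
\[A_D(t)=\qbinom{n}{n_1}{t}\,A_{D_1}(t)\,A_{D_2}(t).\]
If instead $G=G_1\sqcup G_2$ there are no cross‑descents and $A_D(t)=\binom{n}{n_1}A_{D_1}(t)A_{D_2}(t)$. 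Since $\qbinom{n}{n_1}{t}=[n]_t!\big/([n_1]_t!\,[n_2]_t!)$ is a polynomial and $\mathrm{mult}([k]_t!,-1)=\floor{k/2}$ (because $[k]_t$ has $-1$ as a simple root exactly when $k$ is even), these give $\mu(G)=\big(\floor{n/2}-\floor{n_1/2}-\floor{n_2/2}\big)+\mu(G_1)+\mu(G_2)$ and $\mu(G)=\mu(G_1)+\mu(G_2)$ respectively. Now induct on $n$. If $G$ is a nontrivial join, then $\bar G=\bar G_1\sqcup\bar G_2$ has matching number $m_1+m_2$, and feeding the inductive bounds $\mu(G_i)\ge\floor{n_i/2}-m_i$ into the join identity telescopes to $\mu(G)\ge\floor{n/2}-m$. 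If $G$ is disconnected, then $\bar G=\bar G_1+\bar G_2$ is a join, and the elementary fact $\mathrm{mat}(H_1+H_2)\ge\mathrm{mat}(H_1)+\mathrm{mat}(H_2)+\floor{q/2}$ --- where $q\in\{0,1,2\}$ counts the $H_i$ with an odd number of vertices, and one pairs an exposed vertex of each odd part across the join --- contributes exactly the term $\floor{n/2}-\floor{n_1/2}-\floor{n_2/2}$, so the inductive bounds again yield $\mu(G)\ge\floor{n/2}-m$.

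\textbf{Step 3: the core case, and the main obstacle.} This leaves the case where $G$ and $\bar G$ are both connected and $n\ge3$ (for $n\le2$ the claim is immediate). Fix a maximum matching $M$ of $\bar G$; then $W:=V(G)\setminus V(M)$ is a clique of $G$ with $|W|=n-2m$, and it suffices to prove $\mu(G)\ge\floor{(n-2m)/2}$. I expect this to be the hard part. The two extremes ($G=\overline{K_n}$ and $G=K_n$) make it clear that this $\floor{(n-2m)/2}$ must come from the clique $W$ --- recall $\mu(K_s)=\floor{s/2}$ because $A_{K_s}(t)=[s]_t!$ --- but $\mu$ is not monotone under edge deletion (for instance $\mu(K_3)=1>0=\mu(P_3)$), so one cannot simply argue that the factor $(1+t)^{\floor{s/2}}$ inside $A_{K_s}(t)$ survives into $A_G(t)$ once the vertices of $W$ are joined to $U:=V(M)$ in an arbitrary way. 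The route I would pursue is to build a free action of $\mathbb{Z}_2^{k}$ on $\mathfrak S_D$ with $k=\floor{(n-2m)/2}$ whose generators swap the $\sigma$‑values of chosen vertex pairs: consecutive pairs inside $W$ under a transitive orientation (so that within $W$ each swap changes $\des_D$ by exactly $\pm1$), together with swaps pairing up the vertices of $U$ along the matching $M$, arranged so that \emph{every} generator changes $\des_D$ by an odd amount; since $1+t^{2j+1}$ is divisible by $1+t$, this would force $(1+t)^{k}\mid A_D(t)$. Making the $U$‑side swaps absorb the parity defects produced by non‑uniform adjacency between $W$ and $U$ --- which is exactly where the connectedness of $\bar G$ (equivalently, the unavailability of any further join or component reduction) must enter --- is the crux of the whole argument.
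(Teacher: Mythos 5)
Your Step 1 is false, and the whole plan rests on it. The multiplicity of $-1$ as a root of $A_D(t)$ does \emph{not} depend only on the underlying graph: the paper exhibits two orientations $D_1,D_2$ of the same $5$-vertex graph with $A_{D_1}(t)=(1+t)^3(1+t+11t^2+t^3+t^4)$ and $A_{D_2}(t)=(1+t)(1+5t+16t^2+16t^3+16t^4+5t^5+t^6)$ (see \Cref{fig:-1 multiplicity changes with orientation}), so the multiplicities are $3$ and $1$ respectively. Your algebra up to $A_D-tA_{D'}=(1-t^2)Q$ and $A_{D'}-tA_D=t^{-1}(1-t^2)P$ is correct, but ``comparing orders of vanishing'' does not force $\mathrm{ord}_{-1}A_D=\mathrm{ord}_{-1}A_{D'}$: if $P$ and $Q$ both vanish to order $a-1$ at $-1$ but their leading parts cancel in $A_D=P+Q$, one gets $\mathrm{ord}_{-1}A_D=a$ while $\mathrm{ord}_{-1}A_{D'}$ can be $a-1$, with no contradiction. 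Since the proposition must be proved for an \emph{arbitrary} orientation $D$, you are not free to pick the all-arcs-one-way orientation in your join step, and Steps 2--3 as written only address one orientation of each graph.

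Even granting Step 1, the argument is incomplete: Step 3 (the case where $G$ and $\overline{G}$ are both connected) is explicitly left as an unresolved sketch, and it is precisely the case carrying all the content. For comparison, the paper's proof avoids both issues with a short induction on $n$ that works uniformly over all orientations: apply \Cref{lemma split into subgraphs} with $k=2$ to write $A_D(t)$ as a sum over $2$-element sets $S$ of terms $\frac{t^{e_D(S,\overline S)}+t^{e_D(\overline S,S)}}{2}A_{D[S]}(t)A_{D-S}(t)$. If $S$ is a non-edge of $G$ then every maximum matching of $\overline G$ meets $S$, so $\overline{G}-S$ has matching number at most $m-1$ and induction gives multiplicity at least $\lfloor (n-2)/2\rfloor-(m-1)=\lfloor n/2\rfloor-m$ from the factor $A_{D-S}(t)$; if $S$ is an edge of $G$ then $A_{D[S]}(t)=1+t$ supplies one factor and induction (with matching number still at most $m$) supplies the remaining $\lfloor n/2\rfloor-m-1$. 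Every summand is divisible by $(1+t)^{\lfloor n/2\rfloor-m}$, hence so is $A_D(t)$. No orientation-independence, join decomposition, or group action is needed.
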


Roughly speaking, \Cref{prop:matching} says that if $G$ is ``dense'' (i.e.\ if the complement of $G$  contains small only matchings), then $\mult(A_D(t),-1)$ will be large. While \Cref{prop:matching} is not tight in general, it turns out to be tight if $D$ is an orientation of the complete graph.

\begin{theorem}\label{thm: -1 multiplicity for tournaments}
    If $D$ is a tournament on $n$ vertices, then $\mult(A_D(t),-1)=\lfloor\frac{n}{2}\rfloor$.
\end{theorem}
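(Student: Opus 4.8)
The plan is to prove the matching lower bound and upper bound separately, combining \Cref{prop:matching} with a matching upper bound obtained either from \Cref{thm root multiplicity upper bound} or from a direct argument. For the lower bound: if $D$ is a tournament on $n$ vertices, its underlying graph is $K_n$, whose complement $\overline{K_n}$ has no edges, so every matching in the complement has size $m=0$. \Cref{prop:matching} then immediately gives $\mult(A_D(t),-1)\ge \lfloor n/2\rfloor$. So the entire content of the theorem is the upper bound $\mult(A_D(t),-1)\le \lfloor n/2\rfloor$.

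For the upper bound, the cleanest route is to compute $A_D(t)$ — or at least its behavior near $t=-1$ — explicitly, exploiting the fact that for the complete graph the descent statistic is (up to orientation) the inversion statistic. By \Cref{prop:negOne}, $|A_D(-1)|$ is the same for every orientation of $K_n$, and more is true: I claim that $\mult(A_D(t),-1)$ is also orientation-independent. Indeed, reversing a single arc $u\to v$ changes $\des_D(\sigma)$ by exactly $\pm 1$ for every $\sigma$, and in fact partitions $\mathfrak{S}_n$ into pairs (swap the values at $u$ and $v$) whose two descent counts differ by one; pushing this through shows $A_{D'}(t) = t\cdot A_D(t)\big|_{\text{(half the terms)}} + \cdots$ — more carefully, one gets that $A_{D'}(t)$ and $A_D(t)$ have the same multiplicity at $t=-1$ because $(1+t)$ divides their difference appropriately, or one simply appeals to the involution directly. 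Granting this, it suffices to compute $\mult(A_{\ora{K}_n}(t),-1) = \mult(M_n(t),-1)$. The Mahonian polynomial factors as $M_n(t) = \prod_{k=1}^{n}(1+t+\cdots+t^{k-1}) = \prod_{k=1}^n [k]_t$, and the number of factors $[k]_t = (1-t^k)/(1-t)$ divisible by $(1+t)$ is exactly the number of even $k$ in $\{1,\ldots,n\}$, namely $\lfloor n/2\rfloor$. Hence $\mult(M_n(t),-1) = \lfloor n/2\rfloor$, and by orientation-independence the same holds for every tournament $D$.

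The main obstacle is justifying that $\mult(A_D(t),-1)$ is the same for all orientations of a fixed graph — \Cref{prop:negOne} only gives equality of absolute values at $-1$, not equality of root multiplicities. To handle this I would argue as follows: fix an edge $e=\{u,v\}$ and let $D, D'$ differ only in the orientation of $e$. The involution $\iota$ on $\mathfrak{S}_n$ that swaps the labels of $u$ and $v$ satisfies $\des_D(\iota(\sigma)) = \des_D(\sigma) \pm 1$ depending on whether $e$ was a descent of $\sigma$, and similarly the same $\sigma\mapsto\sigma$ (no relabeling) compares $D$ to $D'$: every arc other than $e$ contributes identically, and $e$ is a $D$-descent of $\sigma$ iff it is not a $D'$-descent. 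Writing $A_D(t) = B(t) + t^{\epsilon}C(t)$ where $B$ collects $\sigma$ for which $e$ is not a $D$-descent and $C$ those for which it is, with $\epsilon \in\{0,1\}$ tracking the shift, one sees $A_D(t)$ and $A_{D'}(t)$ differ by a factor related to $(1+t)$ on the relevant part, and a short computation shows $(1+t)^k \mid A_D(t) \iff (1+t)^k \mid A_{D'}(t)$. Alternatively, and perhaps more robustly, I would avoid orientation-independence of multiplicity entirely: pick the single convenient orientation $\ora{K}_n$ (acyclic, = inversions), prove $\mult(M_n(t),-1)=\lfloor n/2\rfloor$ via the product formula as above, and separately observe that the lower bound $\mult(A_D(t),-1)\ge\lfloor n/2\rfloor$ holds for \emph{every} orientation by \Cref{prop:matching}; then note that if some orientation had strictly larger multiplicity, reversing arcs back to $\ora{K}_n$ one at a time — each step changing multiplicity by at most... — this still needs the same lemma. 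So the honest answer is: the key lemma is that single-arc reversal preserves $(1+t)$-adic valuation of $A_D(t)$, which I would prove by the explicit pairing argument above, and the rest is the clean factorization $M_n(t)=\prod_{k\le n}[k]_t$ together with counting even $k$.
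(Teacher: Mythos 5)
Your lower bound is exactly the paper's: $\overline{K_n}$ has no edges, so \Cref{prop:matching} gives $\mult(A_D(t),-1)\ge\lfloor n/2\rfloor$. Your computation for the transitive tournament is also correct: $A_{\ora{K}_n}(t)=M_n(t)=\prod_{k=1}^n[k]_t$, and $(1+t)$ divides $[k]_t$ exactly when $k$ is even, so $\mult(M_n(t),-1)=\lfloor n/2\rfloor$ for that one orientation.

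The gap is the transfer from $\ora{K}_n$ to an arbitrary tournament. Your key lemma --- that reversing a single arc preserves the $(1+t)$-adic valuation of $A_D(t)$ --- is false, and the paper itself exhibits a counterexample: the two digraphs in \Cref{fig:-1 multiplicity changes with orientation} differ in the orientation of one arc, yet have $-1$ multiplicities $3$ and $1$. Your sketched pairing argument does not salvage this. Writing $A_D=B+C$ with $B$ the contribution of permutations for which $e$ is not a $D$-descent, one gets $A_{D'}=tB+t^{-1}C$, hence $tA_{D'}-A_D=(t^2-1)B$; to conclude that $(1+t)^k\mid A_D$ implies $(1+t)^k\mid A_{D'}$ you would need $(1+t)^{k-1}\mid B$, which there is no reason to expect and which fails in the counterexample. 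You acknowledge at the end that your ``alternative route'' circles back to the same unproven lemma, so the upper bound for general tournaments is not established. The paper instead proves the upper bound directly for every tournament: it applies the ordered-set-partition expansion (\Cref{lemma ordered set partition expansion}) with all parts of size $2$, factors out $(1+t)^{\lfloor n/2\rfloor}$ since $A_{D[P_i]}(t)=1+t$ for each pair, and shows the cofactor does not vanish at $t=-1$ by a parity argument: the forward sequence numbers are well defined mod $2$ on unordered partitions, and the number of perfect matchings $(2k-1)!!$ is odd, so a sum of $\pm1$ over them cannot be zero. Something of this flavor --- an argument uniform over all tournaments rather than a reduction to $\ora{K}_n$ --- is what your proof is missing.
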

More generally, we suspect that \Cref{prop:matching} is tight for orientations of complete multipartite graphs, see \Cref{conj:multipartite} for more.

Given \Cref{thm: -1 multiplicity for tournaments} and the fact that $|A_D(-1)|=|A_{D'}(-1)|$ whenever $D,D'$ are orientations of the same graph, it is perhaps natural to guess that $\mult(A_D(t),-1)$ depends only on the underlying graph of $D$.  This turns out to be false; see \Cref{fig:-1 multiplicity changes with orientation} for a counterexample.

    \begin{figure}[hbpt]
        \centering
        \begin{tikzpicture}

        \coordinate (a) at (0:2);
        \coordinate (b) at (72:2);
        \coordinate (c) at (144:2);
        \coordinate (d) at (216:2);
        \coordinate (e) at (288:2);

        \coordinate (base1) at (-6,0);
        \coordinate (base2) at (1,0);

        \node[vertex] (A1) at ($(a) + (base1)$) {$v_1$};
        \node[vertex] (B1) at ($(b) + (base1)$) {$v_2$};
        \node[vertex] (C1) at ($(c) + (base1)$) {$v_3$};
        \node[vertex] (D1) at ($(d) + (base1)$) {$v_4$};
        \node[vertex] (E1) at ($(e) + (base1)$) {$v_5$};

        \node[vertex] (A2) at ($(a) + (base2)$) {$v_1$};
        \node[vertex] (B2) at ($(b) + (base2)$) {$v_2$};
        \node[vertex] (C2) at ($(c) + (base2)$) {$v_3$};
        \node[vertex] (D2) at ($(d) + (base2)$) {$v_4$};
        \node[vertex] (E2) at ($(e) + (base2)$) {$v_5$};
        
        \draw[arrow] (A1) -- (B1);
        \draw[arrow,line width=2pt] (B1)-- (C1);
        \draw[arrow] (C1)-- (D1);
        \draw[arrow](D1) -- (E1);
        \draw[arrow] (A1) -- (C1);
        \draw[arrow] (C1) -- (E1);
        \draw[arrow] (B1) -- (D1);

        \draw[arrow] (A2) -- (B2);
        \draw[arrow,line width=2pt] (C2) -- (B2);
        \draw[arrow] (C2) -- (D2);
        \draw[arrow] (D2) -- (E2);
        \draw[arrow] (A2) -- (C2);
        \draw[arrow] (C2) -- (E2);
        \draw[arrow] (B2) -- (D2);
        
        \end{tikzpicture}
        \caption{Two orientations of the same graph with different $-1$ multiplicities. 
 The digraph on the left has $\A_{D_1}(t) = (1 + t)^3 (1 + t + 11 t^2 + t^3 + t^4)$ while the one on the right has $\A_{D_2}(t) = (1 + t) (1 + 5 t + 16 t^2 + 16 t^3 + 16 t^4 + 5 t^5 + t^6)$.}
        \label{fig:-1 multiplicity changes with orientation}
    \end{figure}
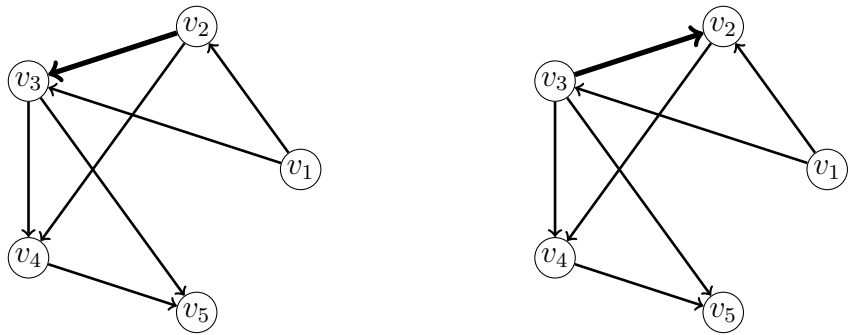

\subsection{Related Works}

Before presenting our proofs, we first comment on a variety of similar looking polynomials that appear in the literature.  This summary is by no means exhaustive, as there are countless objects adopting the monikers of \textit{Eulerian numbers}, \textit{Eulerian polynomials}, \textit{descents} or \textit{inversions}; most of which have little to no relation to the problems studied here.

As previously mentioned, Foata and Zeilberger \cite{FOATA199679} were the first to define the above polynomial $A_D(t)$. They were primarily interested in finding digraphs $D$ for which the statistic $\des_D$ has the same distribution on $\mathfrak{S}_D$ as another statistic $\maj_D$ which is a generalization of the classical ``major index" permutation statistic. Their problem has a positive answer when $D$ is ``bipartitional'', which in our terminology is a join of bidirected complete graphs and digraphs with no edges. In our \Cref{prop basic facts}(d) we consider more general joins of digraphs which recovers some of the results from \cite{FOATA199679}.

When $D$ is acyclic, $\des_D$ is a \textit{weighted-inversion statistic} as in Kadell \cite{KADELL198522} and Degengardt--Milne \cite{DEGENHARDT200049} which is any function $w:\mathfrak{S}_n\to \Z_{\geq 0}$ of the form
\[w(\sigma)=\sum_{\substack{1\leq i<j\leq n\\ \sig(i)>\sig(j)}} w_{i,j}\]
for some upper triangular matrix $(w_{i,j})$. In fact, it is easy to see that $\des_D$ encapsulates all weighted inversion statistics $w$ for which $w_{i,j}\in \{0,1\}$ for all $i,j$. 


Archer et al \cite{ARCHER2020112041} define a \textit{Eulerian polynomial for a family $\mathfrak{B}_n$ of digraphs} as
    \[b_n(u)=\sum_{D\in \mathfrak{B}_n}u^{\des(D)}\]
where $\des(D)$ is defined by \textit{fixing} a labeling $\omega_D:V(D)\to[n]$ of the vertices of $D$  for each $D$ and then counting the number of edges that go from larger label to smaller label.  The polynomial $\A_D(t)$ can be recovered from $b_n(u)$ by selecting $\mathfrak{B}_n$ to be \textit{all} labelings of a fixed digraph, although this particular choice of family of digraphs is not considered in \cite{ARCHER2020112041}.
The polynomial $A_D(t)$ also appears as specializations of the Ellzey \textit{chromatic quasisymmetric function for digraphs} \cite{Ellzey_2017} as well as the Awan and Bernardi \textit{B-polynomial} \cite{awan_bernardi_2020}.

Lastly, we note that Kalai \cite{KALAI2002412} explicitly mentioned the determination of $\A_D(-1)$ as an interesting problem in the context of the Condocet paradox in social choice theory, and Even-Zohar\cite{Even-Zohar2017} gave some basic properties of $\nu(G)$ in their study of the random framed knots through permutation statistics.

\subsection{Organization of the Paper}
The rest of this paper is organized as follows. In \Cref{sec not and basic facts}, we lay out the necessary definitions, notation and elementary properties of $A_D(t)$ for the rest of the paper. In \Cref{sec -1 evaluation} we consider evaluations of $\A_D(t)$ at $t = -1$. In particular, we give  basic properties of $\nu(G)$ in \Cref{subsec:basic}, we prove \Cref{thm:nuEta} in \Cref{subsec:interpretation}, and  we prove \Cref{thm:induced} in \Cref{subsec:induced}. In \Cref{sec bounds for nu on tree} we study the bounds for $\nu(T)$ for trees $T$ and prove \Cref{thm:etaTree}. In \Cref{section: multiplicity} we study the multiplicity of $-1$ as a root of $A_D(t)$ and prove \Cref{prop:matching} and Theorems \ref{thm root multiplicity upper bound} and \ref{thm: -1 multiplicity for tournaments}. We conclude the paper in \Cref{sec conclusion} with a few remarks and open problems regarding $A_D(t)$.

\section{Preliminaries}\label{sec not and basic facts}
\subsection{Notation}\label{sec preliminaries}

    Graphs in this paper will always be finite and simple. An \textit{oriented graph} is a digraph $D$ obtained by taking a graph $G$ and giving an orientation to each of its edges. In this case we say $G$ is the \textit{underlying graph} of $D$ and that $D$ is an \textit{orientation} of $G$.

    We will often denote the vertex set of a graph or digraph by $V(G)$ or $V(D)$ respectively, or simply $V$ whenever $G$ or $D$ is understood; and we similarly use the notation $E(G)$ and $E(D)$. For a subset $S\subseteq V(D)$, we write $D[S]$ for the induced subgraph of $D$ on $S$, and we write $D - S$ for the induced subgraph $D[V\setminus S]$.  We write $\overline{S}$ for the complement $V(D)\sm S$.  For two sets $S,T\subseteq V(D)$, we write $e_D(S,T)$ for the number of arcs  $uv$ whose tail $u$ is in $S$ and whose head $v$ is in $T$.

An \textit{integer composition} $\alpha$ of $n\in \N$ of length $\ell$ is a sequence $\alpha=(\alpha_1,\alpha_2,\dots,\alpha_\ell)$ of positive integers such that $\alpha_1+\cdots+\alpha_\ell=n$, and for such a sequence we write $\alpha\vdash n$. The elements $\alpha_1,\dots,\alpha_\ell$ of $\alpha$ are sometimes called the \textit{parts} of $\alpha$. If $\alpha_1 \geq \alpha_2 \geq \dots \geq \alpha_\ell$, we say that $\alpha$ is an \textit{integer partition}. The \textit{type} of an integer composition $\alpha$, denoted by $\lambda(\alpha)$, is the integer partition given by sorting the parts of $\alpha$ in weakly decreasing order. We will use the notation $(n)^m$ to denote the integer partition $({n,\dots,n})$ that has $m$ copies of $n$.  

An \textit{ordered set partition} of a set~$S$ is a sequence $P=(B_1,\dots,B_\ell)$ of mutually disjoint subsets $B_i$ of $S$ called \textit{blocks} such that $\bigcup_{i=1}^\ell B_i= S$. The \textit{type} of a set partition $P$ is the type of the integer composition $(|B_1|,\dots,|B_\ell)|$. An unordered set partition is an ordered set partition with the order forgotten. 

For positive integers $n_1,\dots,n_r$ and $n=n_1+\cdots+n_r$, the \textit{$t$-multinomial coefficient} is defined to be  \[\begin{bmatrix}
    n\\ n_1,\ldots,n_r
\end{bmatrix}_t=\frac{[n]_t!}{[n_1]_t!\cdots [n_r]_t!}\] where $[n]_t!=(1+t)(1+t+t^2)\cdots (1+t+t^2+\cdots+t^{n-1})$.


	

\subsection{Basic Properties}\label{sec basic facts}

In this subsection, we prove a number of basic facts regarding $\A_D(t)$ which will be used throughout the paper. We begin by establishing a list of elementary properties for $\A_D(t)$.  Some of these properties can be found in Even-Zohar \cite{Even-Zohar2017}; we have included their proofs for completeness. For this, we recall that a polynomial $f(t)=\sum_{k} a(k) t^k$ is \textit{palindromic with center $d/2$} if $a(k)=a(d-k)$ for all $k$. 

\begin{prop}\label{prop basic facts}
    Let $D=(V,E)$ be a directed graph with $n$ vertices and $m$ arcs.
     \begin{enumerate}
        \item[(a)] The polynomial $\A_D(t)$ is palindromic with center $m/2$.
        \item[(b)] If $(u,v),(v,u)\in E$, then $\A_D(t)=t\cdot \A_{D-(u,v)-(v,u)}(t)$.
        \item[(c)] If $D=\bigsqcup_{i=1}^r D_i$ is a disjoint union of digraphs of orders $n_1,\ldots,n_r$, then
        \[\A_D(t)=\binom{n}{n_1,\ldots,n_r} \cdot \prod_{i=1}^r A_{D_i}(t).\]
        \item[(d)] If $D$ is formed by taking the disjoint union of digraphs $D_1,\ldots,D_r$ of orders $n_1,\ldots,n_r$ and then adding all arcs of the form $u\to v$ with $u\in V_i,\ v\in V_j$ and with $i<j$, then
        \[\A_D(t)=\begin{bmatrix} n\\ n_1,\ldots,n_r\end{bmatrix}_t \cdot \prod_{i=1}^r A_{D_i}(t).\]
    \end{enumerate}
\end{prop}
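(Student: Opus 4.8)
The plan is to prove parts (a)–(d) in order, since later parts build on earlier ideas, and I will focus most attention on (d), which is the only genuinely nontrivial statement.

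\textbf{Part (a).} The idea is to exhibit an involution on $\mathfrak{S}_D$ that sends $\des_D(\sigma)$ to $m - \des_D(\sigma)$. The natural candidate is $\sigma \mapsto \bar\sigma$ where $\bar\sigma(v) = n+1-\sigma(v)$. For each arc $u\to v$, we have $\sigma(u)>\sigma(v)$ if and only if $\bar\sigma(u) < \bar\sigma(v)$, so exactly the non-descent arcs of $\sigma$ become the descent arcs of $\bar\sigma$; hence $\des_D(\bar\sigma) = m - \des_D(\sigma)$. This gives $a(k) = a(m-k)$, i.e.\ palindromicity with center $m/2$.

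\textbf{Part (b).} If both $(u,v)$ and $(v,u)$ are arcs, then for every permutation $\sigma$ exactly one of these two arcs is a descent (since either $\sigma(u)>\sigma(v)$ or $\sigma(v)>\sigma(u)$). So $\des_D(\sigma) = 1 + \des_{D'}(\sigma)$ where $D' = D - (u,v) - (v,u)$, and since $\mathfrak{S}_D = \mathfrak{S}_{D'}$ as sets, summing $t^{\des_D(\sigma)} = t\cdot t^{\des_{D'}(\sigma)}$ over all $\sigma$ yields $A_D(t) = t\cdot A_{D'}(t)$.

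\textbf{Parts (c) and (d).} Part (c) is the special case of (d) with no arcs added between the blocks (the $t$-multinomial degenerates to the ordinary multinomial at $t=1$... no, rather: with no cross arcs the cross-contribution to descents is $0$, so one only needs the ordinary multinomial count), so I will prove (d) and note (c) follows by the same argument with all $w_{i,j}$ cross-terms absent. For (d): a permutation $\sigma$ of $D$ is determined by (i) the ordered set partition $(S_1,\ldots,S_r)$ of $[n]$ recording which values land on $V_1,\ldots,V_r$ — here $|S_i| = n_i$ — together with (ii) for each $i$, an order-preserving bijection $V_i \to S_i$, equivalently a permutation $\sigma_i \in \mathfrak{S}_{D_i}$ (identify $S_i$ with $[n_i]$ via the unique increasing map). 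The key observation is that the descents of $\sigma$ split into two independent groups: descents along arcs internal to some $D_i$, which depend only on $\sigma_i$ and contribute $\sum_i \des_{D_i}(\sigma_i)$; and descents along the added cross arcs $u\to v$ with $u\in V_i$, $v\in V_j$, $i<j$, which depend only on the ordered set partition $(S_1,\ldots,S_r)$. For the cross arcs, since $D$ contains \emph{every} arc from $V_i$ to $V_j$ when $i<j$, the number of cross descents equals $\#\{(a,b): a\in S_i, b\in S_j, i<j, a>b\}$, i.e.\ the number of inversions of any word of content $(n_1,\ldots,n_r)$ whose $k$-th letter is $i$ exactly when the $k$-th smallest overall value lies in $S_i$. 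Summing $t^{(\text{inversions})}$ over all such words is precisely the standard $q$-multinomial identity $\sum t^{\inv(w)} = \qbinom{n}{n_1,\ldots,n_r}{t}$ (this is essentially $A_{\ora{K}_n}$ restricted to a fixed "coarsening", cf.\ \Cref{eg:classic}). Therefore
\[
A_D(t) = \sum_{(S_1,\ldots,S_r)} t^{\,\#\text{cross descents}} \prod_{i=1}^r \sum_{\sigma_i\in\mathfrak{S}_{D_i}} t^{\des_{D_i}(\sigma_i)} = \qbinom{n}{n_1,\ldots,n_r}{t}\prod_{i=1}^r A_{D_i}(t),
\]
where the factorization is legitimate because the internal statistics $\des_{D_i}(\sigma_i)$ do not depend on which set partition was chosen. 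For (c), the cross arcs are absent, so the weight $t^{\#\text{cross descents}}$ is identically $1$ and the sum over ordered set partitions just counts them, giving the ordinary multinomial $\binom{n}{n_1,\ldots,n_r}$.

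\textbf{Main obstacle.} The only real content is identifying the generating function for cross descents with the $t$-multinomial coefficient and checking that the bijection $\sigma \leftrightarrow \big((S_1,\ldots,S_r),(\sigma_1,\ldots,\sigma_r)\big)$ cleanly separates the descent statistic into an "internal" part and a "cross" part that are genuinely independent; once that decomposition is set up correctly, everything factors. The $t$-multinomial identity itself is classical (MacMahon), so I would cite it rather than reprove it, though for $r=2$ one can give a one-line recursive argument via Pascal-type recurrence for $\qbinom{n}{k}{t}$ if a self-contained proof is preferred.
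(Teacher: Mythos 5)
Your proof is correct and takes essentially the same route as the paper's: the same complementation involution for (a), the same ``exactly one of the two opposite arcs is a descent'' observation for (b), and for (c)--(d) the same decomposition of $\sigma$ into an ordered set partition plus internal permutations, with cross-descents identified as inversions of a multiset word and the classical identity $\sum_w t^{\inv(w)}=\begin{bmatrix} n\\ n_1,\ldots,n_r\end{bmatrix}_t$ invoked. One small wording slip: in (d) the bijection $V_i\to S_i$ should be an arbitrary bijection (an order-preserving one would be unique and carry no data); your subsequent identification with $\sigma_i\in\mathfrak{S}_{D_i}$ makes clear this is what you meant.
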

    We note that (b) and (c) allows us to reduce our problems to studying digraphs $D$ which are orientations of connected graphs. 
\begin{proof}
    For (a), consider the map $\phi:\mathfrak{S}_D\to \mathfrak{S}_D$ which sends $\sigma\in \mathfrak{S}_D$ to $\tau\in \mathfrak{S}_D$ with $\tau(u)=n-\sigma(u)+1$ for all $u\in V$.  It is not difficult to see that this is an involution and that $\tau$ has $k$ descents if and only if $\sigma$ has $m-k$ descents.  From this it follows that $\A_D(t)$ is palindromic with center $m/2$.

    For (b), we observe that for every permutation of $D$, exactly one of the arcs $(u,v)$ and $(v,u)$ will be a $D$-descent, giving the result.

    For (c), let $D$, $D_i$ and $n_i$ be as in the statement of the proposition. A permutation $\sigma\in \mathfrak{S}_D$ can be made by choosing an ordered set partition $\pi=(B_1,\dots,B_r)$ of $[n]$ of type $(n_1,\dots,n_r)$ and then choosing a bijection $\sigma_i:V(D_i)\to B_i$ for each $i\in [r]$. For each $i$, we can view $\sigma_i$ as an element of $\mathfrak{S}_{D_i}$. Since $D$ is a disjoint union of digraphs, we have
    \[\des_D(\sigma)=\des_{D_1}(\sigma_1)+\cdots +\des_{D_r}(\sigma_r).\]
    Since there are $\binom{n}{n_1,\dots,n_r}$ ordered set partitions of type $(n_1,\dots,n_r)$, we have
    \begin{align*}
        A_D(t)=\sum_{\sigma\in \mathfrak{S}_D}t^{\des_D(\sigma)}
        =\sum_{(B_1,\dots,B_r)} \prod_{i=1}^r\lp\sum_{\sigma_i\in \mathfrak{S}_{D_i}}t^{\des_{D_i}(\sigma_i)}\rp
        =\binom{n}{n_1,\dots,n_r}\prod_{i=1}^r A_{D_i}
    \end{align*}


    
    For (d), we can again view $\sigma\in \mathfrak{S}_D$ as a tuple $(\pi,\sigma_1,\dots,\sigma_r)$ of ordered set partition $\pi$ of type $(n_1,\dots,n_r)$ and permutations $\sigma_i\in \mathfrak{S}_{D_i}$. Let $\mathfrak{M}_{n_1,\dots,n_r}$ be the set of words $w$ with $n_1$ 1's, $n_2$ 2's, etc. Ordered set partitions $\pi$ of type $(n_1,\dots,n_r)$ are in natural bijection with words $w\in \mathfrak{M}_{n_1,\dots,n_r}$ by the map that sends $\pi$ to the word $w$ whose $i$-th letter is $j$ if $i\in B_j$. Then, a $D$-descent $(u,v)$ either has $u,v\in V_i$ or $u\in V_i, v\in V_j$ with $i<j$: the former are counted by $\des_{D_j}(\sigma_j)$, and the latter are counted by the pairs $(w_{\sigma(u)},w_{\sigma(v)})$ with $w$ the word in bijection with $\pi$. Hence, we have
    \[\des_D(\sigma)=\inv(w)+\des_{D_1}(\sigma_1)+\cdots +\des_{D_r}(\sigma_r).\]
    Then we have
    \begin{align*}
        A_D(t)=\sum_{\sigma\in \mathfrak{S}_D}t^{\des_D(\sigma)}
        =\sum_{w\in \mathfrak{M}_{n_1,\dots,n_r}}t^{\inv(w)} \prod_{i=1}^r\lp\sum_{\sigma_i\in \mathfrak{S}_{D_i}}t^{\des_{D_i}(\sigma_i)}\rp
        =\sum_{w\in \mathfrak{M}_{n_1,\dots,n_r}}t^{\inv(w)} \prod_{i=1}^r A_{D_i}
    \end{align*}
    The result then follows from the well-known result (see \cite[Proposition 1.7.1]{stanley_2011}) that
    \[\sum_{w\in \mathfrak{M}_{n_1,\dots,n_r}}t^{\inv(w)}=\begin{bmatrix}
        n\\ n_1,\dots,n_r
    \end{bmatrix}_t.\qedhere\]
\end{proof}

Next, we have a lemma which allows us to express the Eulerian polynomial of a digraph $D$ in terms of Eulerian polynomials of induced subgraphs of $D$. 
 \begin{lemma}\label{lemma split into subgraphs}
     If $D=(V,E)$ is an $n$-vertex digraph and $k\in [n]$, then
     \[
         \A_D(t) = \sum_{S\in \binom{V}{k}} \frac{t^{e_D(S,\overline{S})} + t^{e_D(\overline{S},S)}}{2}\A_{D[S]}(t)\A_{D - S}(t).
     \]
 \end{lemma}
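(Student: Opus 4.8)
The plan is to prove this by a direct bijective/counting argument over permutations, splitting $\mathfrak{S}_D$ according to which $k$ labels are assigned to which vertices. Fix $S \in \binom{V}{k}$. Any permutation $\sigma \in \mathfrak{S}_D$ restricts to $S$ by recording, for each vertex of $S$, the relative order of its label among the $k$ labels $\sigma(S)$; this gives a permutation $\sigma_S \in \mathfrak{S}_{D[S]}$, and similarly a permutation $\sigma_{\overline S} \in \mathfrak{S}_{D-S}$. The key observation is that a $D$-descent $u \to v$ of $\sigma$ falls into exactly one of three categories: both endpoints in $S$ (counted by $\des_{D[S]}(\sigma_S)$), both endpoints in $\overline S$ (counted by $\des_{D-S}(\sigma_{\overline S})$), or one endpoint in each of $S, \overline S$ (a ``crossing'' descent). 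So the generating function naturally factors as a product of $\A_{D[S]}(t)$, $\A_{D-S}(t)$, and a contribution from the crossing arcs, and the main point is to understand the crossing contribution.

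The heart of the argument is thus the following sub-claim: as we sum over all permutations $\sigma$ with $\sigma(S)$ equal to a fixed $k$-subset $L \subseteq [n]$, the joint statistic (crossing descents, $\sigma_S$, $\sigma_{\overline S}$) ranges over all triples in (some distribution) $\times \mathfrak{S}_{D[S]} \times \mathfrak{S}_{D-S}$ with the three coordinates independent, and the generating function for the number of crossing descents alone is $\frac{1}{2}(t^{e_D(S,\overline S)} + t^{e_D(\overline S, S)}) \cdot \binom{n}{k}^{-1} \cdot \binom{n}{k}$ — more precisely, I claim that
\[
\sum_{\sigma \,:\, \sigma(S) = L} t^{(\text{crossing descents of }\sigma)} = \Big(\sum_{\sigma_S,\sigma_{\overline S}} t^{\des(\sigma_S) + \des(\sigma_{\overline S})}\Big)^{-1}\cdot(\cdots)
\]
is not quite the clean way to say it; instead I would argue: once the \emph{interleaving pattern} of $L$ versus $[n]\setminus L$ among positions is chosen, the crossing descents depend only on this interleaving and on $\sigma_S, \sigma_{\overline S}$, but in fact — and this is the crucial simplification — summing over the $\binom{n}{k}$ choices of $L$ while holding $\sigma_S$ and $\sigma_{\overline S}$ fixed, the number of crossing descents of $u \to v$ (for a crossing arc) behaves like an independent coin flip. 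The cleanest route: reverse-an-arc argument à la \Cref{prop:negOne} is not available here since we are not at $t=-1$; instead, use the involution on pairs $(S\text{-interleaving})$ that is identity-like, OR appeal to \Cref{prop basic facts}(d)-style reasoning applied to the ``crossing bipartite digraph'' between $S$ and $\overline S$.

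Concretely, I would factor the sum as
\[
\A_D(t) = \sum_{S \in \binom{V}{k}} \Big(\sum_{\sigma \,:\, \sigma(S)\text{ is the small }k\text{ labels}} t^{\des_D(\sigma)}\Big)\Big/(\text{overcounting}),
\]
then note that for a \emph{fixed} choice of which $k$ positions of $[n]$ receive the $S$-labels, the crossing descents of arcs from $S$ to $\overline S$ versus $\overline S$ to $S$ split according to that position-pattern, and summing the two ``extreme'' patterns ($S$ gets labels $\{1,\dots,k\}$, giving exactly $e_D(\overline S, S)$ crossing descents always; $S$ gets labels $\{n-k+1,\dots,n\}$, giving exactly $e_D(S,\overline S)$ always) is what produces the symmetric term $\tfrac12(t^{e_D(S,\overline S)} + t^{e_D(\overline S,S)})$ after one checks that averaging over \emph{all} position-patterns for a fixed unordered pair $\{S,\overline S\}$ — equivalently over all $S' \in \binom Vk$ — reduces to averaging these two extremes with the intermediate ones canceling or telescoping via palindromicity. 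The main obstacle I expect is precisely this last step: showing the ``intermediate interleavings'' sum, over all $S$ and all refinements, collapses to the clean two-term average; I would handle it by grouping the $2\binom nk$ ordered pairs $(S,\overline S)$ and observing the crossing-descent generating function of the bipartite digraph between $S$ and $\overline S$ (with labels interleaved) is governed by a $t$-binomial identity, then using that $\sum$ over interleavings of $t^{(\text{crossings})}$, divided by $\binom nk$-style normalization, telescopes — alternatively, and perhaps more robustly, induct on $n$ or directly invoke \Cref{prop basic facts}(d) with $r$ large after a suitable vertex ordering, checking the bookkeeping matches.
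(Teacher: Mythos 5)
There is a genuine gap here: you have collected the right raw materials (the three-way classification of descents, and the observation that the two extreme label-assignments produce exactly $e_D(\overline{S},S)$ and $e_D(S,\overline{S})$ crossing descents) but you never assemble them into a proof, and the ``main obstacle'' you flag --- showing that the intermediate interleavings cancel or telescope --- is a non-issue in the correct argument because no intermediate interleavings ever need to be considered. The key realization you are missing is that the map $\sigma \mapsto \sigma^{-1}([k])$ partitions $\mathfrak{S}_D$ \emph{exactly}: every permutation $\sigma$ determines a unique $S$ with $\sigma(S)=[k]$, so $\A_D(t)=\sum_{S}\sum_{\sigma:\,\sigma(S)=[k]}t^{\des_D(\sigma)}$ with no overcounting, no normalization, and no sum over other label-sets $L$. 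For such a $\sigma$, every crossing arc from $\overline{S}$ into $S$ is automatically a descent and no crossing arc from $S$ into $\overline{S}$ is, so the crossing contribution is the deterministic factor $t^{e_D(\overline{S},S)}$ and the inner sum factors as $t^{e_D(\overline{S},S)}\A_{D[S]}(t)\A_{D-S}(t)$. This already yields one exact identity for $\A_D(t)$; repeating with $\sigma(S)=\{n-k+1,\dots,n\}$ yields a second exact identity with exponent $e_D(S,\overline{S})$, and averaging the two identities gives the lemma. No palindromicity, no $t$-binomial identities, and no appeal to Proposition 2.2(d) are needed.

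A further problem with the route you sketch: for a general $k$-subset $L$, the number of crossing descents of a permutation with $\sigma(S)=L$ depends jointly on the interleaving of $L$ with $[n]\setminus L$ \emph{and} on the relative orders $\sigma_S,\sigma_{\overline{S}}$, so the independence you posit for the triple (crossing descents, $\sigma_S$, $\sigma_{\overline{S}}$) is false except in the two extreme cases. Any attempt to sum over all $L$ therefore runs into real difficulties that the two-extremes argument entirely sidesteps.
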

 \begin{proof} 
 
 For ease of notation we assume $V=[n]$. Fix $\sigma\in \mathfrak{S}_{D}$ and let $S\subseteq V(D)$ be the set such that $\sigma(S)$ is the interval $[k]=\{1,\dots,k\}$. We observe that if $u\to v$ is a descent for $\sigma$ in $D$, then one of the following must hold:
 \begin{itemize}
     \item Both $u$ and $v$ are in $S$
     \item Both $u$ and $v$ are in $\overline{S}$
     \item $v$ is in $S$ and $u$ is in $\overline{S}$ (since $\sig(S)=[k]$)
 \end{itemize} Therefore, if we set $\tau=\sigma|_{S}$ and $\tau'=\sigma|_{\overline{S}}$, we have
 \[
    \des_D(\sigma) = e_D(\overline{S},S)+\des_{D[S]}(\tau) + \des_{D - S}(\sigma') 
 \] Hence we have
     \begin{align*}
         \A_D(t) &=\sum_{\sigma\in \mathfrak{S}_n}t^{\des_D(\sigma)}\\
         &= \sum_{S\in \binom{[n]}{k} }\sum_{\substack{\sigma\in \mathfrak{S}_n\\ \sigma(S)=[k]}}t^{\des_D(\sigma)}\\
         &= \sum_{S\in \binom{[n]}{k}} \sum_{\substack{\tau\in \mathfrak{S}_S\\ \tau'\in \mathfrak{S}_{[n] - S} }}t^{e_D(\overline{S},S)+\des_{D[S]}(\tau) +\des_{D - S}(\tau') }\\
         &= \sum_{S\in \binom{[n]}{k}} t^{e_D(\overline{S},S)}\A_{D[S]}(t) \A_{D - S}(t)
     \end{align*}
     If we repeat this same argument but consider $\sigma$ and $S$ with $\sigma(S)=\{n - k+1,\dots,n\}$, then we get
     \[
        \A_D(t) = \sum_{S\in \binom{[n]}{k}} t^{e_D(S,\overline{S})}\A_{D[S]}(t) \A_{D - S}(t)
     \] By adding these two expressions for $\A_D(t)$ and dividing by 2, we find that
     \[
        \A_D(t) = \sum_{S\in \binom{[n]}{k}} \frac{t^{e_D(S,\overline{S})} + t^{e_D(\overline{S},S)}}{2}\A_{D[S]}(t)\A_{D - S}(t).\qedhere
     \]
 \end{proof}

Finally, we consider a construction which will be useful for \Cref{thm root multiplicity upper bound} and \Cref{prop:nuGeneral}. Given digraphs $D_1,D_2$ and a root vertex $v\in D_2$, the  \textit{rooted product digraph}, denoted $D_1\circ_v D_2$, is obtained by gluing a copy of $D_2$ at $v$ to each vertex of $D_1$, see \Cref{fig:rooted product} for an example.
\begin{figure}[h]
    \centering
    \begin{tikzpicture}
        \tikzset{shorten >= 6pt};
        \coordinate (P3base) at (-6,0);
        \coordinate (K3base) at (-2,0);
        \coordinate (prodbase) at (3,0);
        \coordinate (label) at (0,-3);

        \coordinate (p1) at (-2.25,0);
        \coordinate (p2) at (-0.75,0);
        \coordinate (p3) at (0.75,0);
        \coordinate (p4) at (2.25,0);

        \coordinate (k1) at (0,0);
        \coordinate (k2) at (1,-1);
        \coordinate (k3) at (0,-2);

         \draw[arrow] ($(P3base) + (p1)$) -- ($(P3base) + (p2)$);
         \draw[arrow] ($(P3base) + (p2)$) -- ($(P3base) + (p3)$);
         \draw[arrow] ($(P3base) + (p3)$) -- ($(P3base) + (p4)$);

        \draw[arrow] ($(prodbase) + (p1)$) -- ($(prodbase) + (p2)$);
        \draw[arrow] ($(prodbase) + (p2)$) -- ($(prodbase) + (p3)$);
        \draw[arrow] ($(prodbase) + (p3)$) -- ($(prodbase) + (p4)$);

        \foreach \v in {p1,p2,p3,p4}{
            \node[vertexSmall] at ($(P3base) + (\v)$) {};

            \draw[arrow] ($(prodbase) + (\v)$) -- ($(prodbase) + (\v) + (k2)$);
            \draw[arrow] ($(prodbase) + (\v) + (k2)$) -- ($(prodbase) + (\v) + (k3)$);
            \draw[arrow] ($(prodbase) + (\v) + (k3)$) -- ($(prodbase) + (\v)$);
            
            \node[vertexSmall,fill=black] at ($(prodbase) + (\v)$) {};
            \node[vertexSmall] at ($(prodbase) + (\v) + (k2)$) {};
            \node[vertexSmall] at ($(prodbase) + (\v) + (k3)$) {};
        }

       \draw[arrow] ($(K3base) + (k1)$) -- ($(K3base) + (k2)$);
       \draw[arrow] ($(K3base) + (k2)$) -- ($(K3base) + (k3)$);
       \draw[arrow] ($(K3base) + (k3)$) -- ($(K3base) + (k1)$);

        \node[vertexSmall,fill=black] at ($(K3base) + (k1)$) {};
        \node[vertexSmall] at ($(K3base) + (k2)$) {};
        \node[vertexSmall] at ($(K3base)  + (k3)$) {};

        \node[draw] at ($(P3base) + (label)$) {$\ora{P_4}$};
        \node[draw] at ($(K3base) + (label)$) {$\ora{K_3}$};
        \node[draw] at ($(prodbase) + (label)$) {$\ora{P_4}\circ_v \ora{K_3}$};

        \node[above right=2pt and 2pt of K3base]  {$v$};

    \end{tikzpicture}
    \caption{The rooted product digraph $\ora{P_4} \circ_v \ora{K_3}$ with the vertex $v$ highlighted in black.}
    \label{fig:rooted product}
\end{figure}
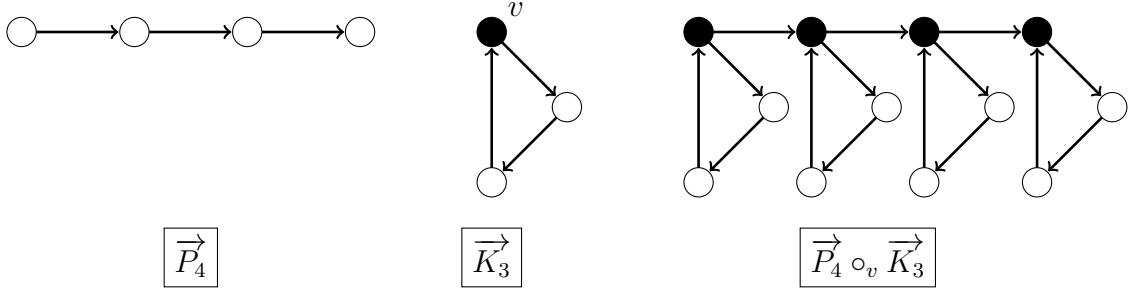

 This product was first defined by Godsil and McKay \cite{godsil1978new}, and it turns out that this operation plays very nicely with the Eulerian polynomial.
    \begin{prop}\label{prop: rooted product graphs}
        Let $D_1$ and $D_2$ be two digraphs on $m$ and $n$ vertices respectively. If $v\in D_2$, then 
        \[A_{D_1\circ_v D_2}(t)=\frac{1}{m!}\binom{mn}{n,\ldots,n}\cdot  A_{D_1}(t)A_{D_2}(t)^m.\]
        In particular, the polynomial is the same for any choice of root $v\in D_2$.
    \end{prop}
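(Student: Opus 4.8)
The plan is to mimic the proof of \Cref{prop basic facts}(c): describe a permutation of $D_1\circ_v D_2$ by the data of (i) how the labels $[mn]$ are distributed among the $m$ copies of $D_2$, and (ii) an internal permutation of each copy, and then observe that the descent statistic splits as a sum over the copies. Write $D=D_1\circ_v D_2$ and let $C_w$ denote the copy of $D_2$ glued at the vertex $w\in V(D_1)$; note $V(D)=\bigsqcup_{w\in V(D_1)} V(C_w)$, and every arc of $D$ lies entirely within a single copy $C_w$ (there are no arcs between distinct copies — the only interaction between copies happens through the shared structure of $D_1$, but in the rooted product the edges of $D_1$ are \emph{not} present; each vertex $w$ of $D_1$ is identified with the root of $C_w$, and the copies are otherwise disjoint). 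Actually, more carefully: the rooted product glues a copy of $D_2$ at $v$ to \emph{each vertex} of $D_1$ \emph{and keeps the edges of $D_1$}. So the arcs of $D$ are: the arcs of $D_1$ (on the roots), plus the arcs internal to each $C_w$. I will need to handle the $D_1$-arcs too.

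First I would set up the bijection: a permutation $\sigma\in\mathfrak S_D$ is equivalent to choosing an ordered set partition $(B_w)_{w\in V(D_1)}$ of $[mn]$ with each $|B_w|=n$ — one block per copy — together with a bijection $\sigma_w\colon V(C_w)\to B_w$ for each $w$. Under this correspondence, a $D$-descent is either (a) an arc internal to some $C_w$, contributing $\des_{D_2}(\sigma_w)$ after identifying $\sigma_w$ with an element of $\mathfrak S_{D_2}$, or (b) an arc $w\to w'$ of $D_1$, which is a descent iff $\sigma(w)>\sigma(w')$, i.e. iff $\min B_w>\min B_{w'}$ — wait, no: $\sigma(w)$ is the label the root of $C_w$ receives, which is $\sigma_w$ evaluated at the root, not $\min B_w$. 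So the $D_1$-descents depend on both the set partition and the internal permutations. This is the subtlety. The cleanest fix: further decompose $\sigma_w$ as a choice of which element of $B_w$ is assigned to the root together with a permutation of $D_2$; equivalently, group the data as (the relative order of the root-labels across copies) $\times$ (the rest). The relative order of the $m$ root-labels $\sigma(w)$, $w\in V(D_1)$, is itself a permutation of $D_1$, and as the set partition and internal permutations range over all possibilities this induced permutation is equidistributed over $\mathfrak S_{D_1}$, independently of the remaining data; the number of ways to realize each fixed induced ordering is the same. Summing, $\des_D(\sigma)=\des_{D_1}(\pi)+\sum_{w}\des_{D_2}(\sigma_w)$ where $\pi\in\mathfrak S_{D_1}$ is the induced root-ordering, and the generating function factors as
\[
A_D(t)=\Bigl(\text{number of ways to realize a fixed }\pi\Bigr)\cdot A_{D_1}(t)\cdot A_{D_2}(t)^m.
\]
The multiplicative constant is a pure counting quantity: the total number of $\sigma$ divided by $|\mathfrak S_{D_1}|\cdot(\text{something})$; tracking it gives $\frac{1}{m!}\binom{mn}{n,\ldots,n}$, since $|\mathfrak S_D|=(mn)!$, $A_{D_1}(1)=m!$, $A_{D_2}(1)^m=(n!)^m$, and $\frac{(mn)!}{m!\,(n!)^m}=\frac{1}{m!}\binom{mn}{n,\ldots,n}$, which forces the constant by evaluating the claimed identity at $t=1$ — and independence is what guarantees the factorization holds at the level of polynomials, not just at $t=1$.

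The main obstacle is making the independence/equidistribution claim in the previous paragraph rigorous: one must verify that, for the joint distribution of (induced root-ordering $\pi$, internal data) as $\sigma$ ranges over $\mathfrak S_D$, the statistic $\des_{D_1}$ sees only $\pi$, the statistic $\sum_w\des_{D_2}(\sigma_w)$ sees only the internal data, and these two pieces are genuinely independent (each fiber has the same size). I would make this precise by exhibiting an explicit bijection $\mathfrak S_D\leftrightarrow \mathfrak S_{D_1}\times \bigl(\text{set of size }\tfrac{1}{m!}\binom{mn}{n,\ldots,n}\bigr)\times\prod_{w}\mathfrak S_{D_2}$ under which $\des_D$ becomes $\des_{D_1}+\sum\des_{D_2}$: e.g., sort the copies by their root-labels to read off $\pi$, record the "standardized" set partition and the internal permutations with roots normalized. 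Once the bijection is written down the descent-additivity is immediate from the fact that arcs of $D$ partition into $D_1$-arcs (among roots) and $C_w$-internal arcs, and the final "in particular" statement — independence of the choice of root $v$ — is then automatic since the right-hand side does not mention $v$.
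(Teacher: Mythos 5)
Your overall architecture matches the paper's: decompose a labeling of $D_1\circ_v D_2$ into an ordered set partition of $[mn]$ into $m$ blocks of size $n$, standardized internal labelings of the copies, and the relative order of the $m$ root labels, and observe that $\des_D$ splits as the sum of the internal $D_2$-descent counts and the $D_1$-descent count of the induced root-ordering. The gap is in the step you yourself flag as the main obstacle: the independence statement you propose to prove is false. Concretely, take $m=n=2$ and fix the standardizations so that the root of the copy at the first vertex of $D_1$ receives the \emph{smaller} label of its block while the root of the copy at the second vertex receives the \emph{larger} label of its block. Of the $\binom{4}{2}=6$ ordered set partitions of $[4]$ into two blocks of size $2$, five give (root label at the first vertex) $<$ (root label at the second vertex) and only one gives the reverse. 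So, conditional on the internal data, the induced root-ordering is \emph{not} equidistributed over $\mathfrak{S}_{D_1}$, and the fibers of your proposed bijection onto $\mathfrak{S}_{D_1}\times(\cdot)\times\prod_w\mathfrak{S}_{D_2}$ do not have constant size: in this example the fiber over the identity root-ordering has size $5$, $1$, $3$, or $3$ as the pair of root-ranks of the standardizations ranges over $(1,2),(2,1),(1,1),(2,2)$. Relatedly, pinning down the constant by setting $t=1$ is circular: that only determines $C$ once the factorization $A_D(t)=C\cdot A_{D_1}(t)A_{D_2}(t)^m$ is already established, and establishing it is exactly what the (false) constant-fiber claim was supposed to do.

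The repair --- and what the paper actually does --- is to reverse the quantifiers. Rather than fixing which block is attached to which vertex of $D_1$ and then reading off the induced ordering, parametrize a labeling by an \emph{unordered} family of $m$ pairs (block, internal permutation), whose roots receive $m$ distinct labels, together with a free $\tau\in\mathfrak{S}_{D_1}$ dictating which vertex of $D_1$ receives the pair whose root label has a given rank. This parametrization is $m!$-to-$1$ (simultaneously permuting the pairs yields the same labeling), the root-ordering is independent of the internal data \emph{by construction}, and the statistic $\sum_w\des_{D_2}(\sigma_w)$ is invariant under the $\mathfrak{S}_m$-action, which is what lets one divide by $m!$ and still factor the generating function. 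Equivalently, in your ``sort by root label'' formulation: the constrained set of sorted data is a fundamental domain for a free $\mathfrak{S}_m$-action preserving $\sum_w\des_{D_2}$, so its generating function is $\frac{1}{m!}$ times the unconstrained one (which visibly factors as $\binom{mn}{n,\ldots,n}A_{D_2}(t)^m$). That orbit argument, not constancy of fiber sizes, is the correct justification; with it in place the rest of your outline goes through.
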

\begin{proof}
            To create a permutation $\sigma\in \mathfrak{S}_D$, we can do the following
            \begin{enumerate}
                \item Select a vector of permutations $(\sigma_1,\dots,\sigma_m)\in \prod_{i=1}^m\mathfrak{S}_{D_2}$.
                \item Select an ordered set partition $\pi=(B_1,\dots,B_m)$ of $[mn]$ of type $(n)^m$. For each $i$, if $B_i=\{b_1<b_2,\dots<b_m\}$, we think of $\sigma_i$ as a map $\sigma:V(D_2)\to B_i$ via $\sigma_i(v_k)=b_j$ if $\sigma(v_k)=j$.
                \item Select a permutation $\tau\in \mathfrak{S}_{D_1}$.
                \item For each $u\in D_1$, if $\tau(u)=i$, assign the permutation $\sigma_j$ with the $i$-th largest value at the root $v$ to the copy of $D_2$ at $u$.
            \end{enumerate}
            Let $T$ be the set of tuples $(\sigma_1,\dots,\sigma_{m};\pi;\tau)$ of permutations $\sigma_i$ of $D_2$, ordered partitions $\pi$ of type $(n)^m$, and permutations $\tau$ of $D_1$. Then, the above defines a map
            \[\phi:T\to \mathfrak{S}_{D}.\]
            For a given $\rho\in \mathfrak{S}_m$, we observe that two elements
            \[(\sigma_1,\dots,\sigma_{m};B_1,\dots,B_m;\tau)\quad\text{\quad and \quad}(\sigma_{\rho(1)},\dots,\sigma_{\rho(m)};B_{\rho(1)},\dots,B_{\rho(m)};\tau)\]
            of $T$ produce exactly the same element of $\mathfrak{S}_D$ under $\phi$. Hence, $\phi$ is an $m!$-to-1 map. 

            For $\psi\in \mathfrak{S}_D$, let $(\sigma_1\dots,\sigma_{m};\pi;\tau)$ be an element of the preimage $\phi^{-1}(\psi)$. The $D$-descents of $\psi$ come from either an edge in one of the copies of $D_2$ or an edge in $D_1$. The former are exactly the $D_2$-descents of the $\sigma_i$ (thinking of them as elements of $\mathfrak{S}_{D_2})$. For the latter, $(u,u')$ is a $D$-descent of $\psi$ between two vertices of $D_1$ if and only if the edge is a $D_1$ descent of $\tau$ because $\psi(u)=\sigma_j(v)$ if and only if $\sigma_j(v)$ is the $\tau(u)$-th largest element of $\{\sigma_1(v),\dots,\sigma_j(v)\}$ (thinking of $\sigma_i$ as a map $\sigma_i:V(D_2)\to B_i$). Hence, we have
            \[\des_D(\psi)=\des_{D_2}(\sigma_1)+\cdots+\des_{D_2}(\sigma_m)+\des_{D_1}(\tau).\]
            Since there are $\binom{mn}{n,\ldots,n}$ ordered set partitions of $[mn]$ of type $(n)^m$ and $\phi$ is $m!$-to-1, we have

            \begin{align*}
        A_{D}(t)&=\sum_{\psi\in \mathfrak{S}_D}t^{\des_D(\psi)}\\
        &=\frac{1}{m!}\sum_{(\sigma_1,\dots,\sigma_m;\pi;\tau)\in T}t^{\des_{D_2}(\sigma_1)+\cdots+\des_{D_2}(\sigma_m)+\des_{D_1}(\tau)}  \\
        &=\frac{1}{m!}\binom{nm}{n,\ldots,n}\lp \sum_{\sigma\in \mathfrak{S}_{D_2}}t^{\des_{D_2}(\sigma)}\rp^m\sum_{\tau\in \mathfrak{S}_{D_1}}t^{\des_{D_1}(\tau)}\\
        &=\frac{1}{m!}\binom{nm}{n,\ldots,n}A_{D_2}(t)^m A_{D_1}(t).\qedhere
    \end{align*}
        \end{proof}

\section{Combinatorial Interpretations of $\nu(G)$}\label{sec -1 evaluation}
In this section, we prove our results regarding $\nu(G):=|\A_D(-1)|$ where $D$ is any orientation of the graph $G$.  As noted in the introduction, $\nu(G)$ is intimately related to the quantity $\eta(G)$, which is the number of even sequences of $G$, i.e.\ the number of orderings $\pi$ of $V(G)$ such that the induced subgraphs $G[\pi_1,\ldots,\pi_i]$ have an even number of edges for all $i$.  As a warmup, we formally establish the connection between alternating permutations and even sequences of paths mentioned in the introduction.
\begin{prop}\label{prop:path}
    Let $P_n$ denote the path graph with vertex set $[n]$.  A permutation $\pi$ of $[n]$ is an even sequence of $P_n$ if and only if $\pi^{-1}$ is an alternating permutation.
\end{prop}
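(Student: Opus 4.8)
The plan is to follow the growth of the edge count of the induced subgraphs as vertices are revealed. Fix an ordering $\pi=(\pi_1,\ldots,\pi_n)$ of $V(P_n)=[n]$, write $\sigma=\pi^{-1}$ (so that $\sigma(v)$ is the index at which the vertex $v$ is inserted), and set $S_i=\{\pi_1,\ldots,\pi_i\}$. The number of edges of $P_n[S_i]$ is nondecreasing in $i$, equals $0$ for $i=1$, and, when passing from $S_{i-1}$ to $S_i$, increases by exactly the number of $P_n$-neighbors of $\pi_i$ already present in $S_{i-1}$; since $P_n$ has maximum degree $2$, that increment is $0$, $1$, or $2$. Hence every $P_n[S_i]$ has an even number of edges if and only if no such increment equals $1$, i.e.\ if and only if for every vertex $v\in[n]$ the number of neighbors of $v$ that precede $v$ in $\pi$ lies in $\{0,2\}$.

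Next I would rewrite this condition in terms of $\sigma$, using that $u$ precedes $v$ in $\pi$ precisely when $\sigma(u)<\sigma(v)$. Since the neighbors of $v$ in $P_n$ are $v-1$ and $v+1$ (those lying in $[n]$), the condition becomes: for each interior vertex $2\le v\le n-1$, $\sigma(v-1)<\sigma(v)$ holds if and only if $\sigma(v+1)<\sigma(v)$ — that is, position $v$ is a strict local maximum or a strict local minimum of the word $\sigma(1)\sigma(2)\cdots\sigma(n)$ — while at $v=1$ the lone neighbor $2$ must come after $1$, so $\sigma(1)<\sigma(2)$, and at $v=n$ the lone neighbor $n-1$ must come after $n$, so $\sigma(n-1)>\sigma(n)$.

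It then remains to check that these three requirements hold together exactly when $\sigma$ is an alternating permutation in the sense of the paper, i.e.\ $n$ is odd and $\sigma(1)<\sigma(2)>\sigma(3)<\cdots>\sigma(n)$. Given $\sigma(1)<\sigma(2)$, an easy induction using the interior condition shows that the comparisons between consecutive values must alternate, since a position that is a strict local maximum cannot be immediately followed by another strict local maximum, and likewise for minima; thus $\sigma(k)<\sigma(k+1)$ for odd $k$ and $\sigma(k)>\sigma(k+1)$ for even $k$. The boundary condition $\sigma(n-1)>\sigma(n)$ then says that $n-1$ is even, i.e.\ that $n$ is odd, and it is automatic once $n$ is odd; conversely any up-down permutation of odd length obviously satisfies all three requirements. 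The step that needs the most care is the bookkeeping at the two endpoints of the path together with the parity of $n$: it is precisely the endpoint conditions that force an even sequence to exist only when $n$ is odd (consistent with $P_n$ having $n-1$ edges), and a proof that only verified the interior alternation would be incomplete.
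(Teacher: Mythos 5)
Your proof is correct and follows essentially the same route as the paper's: both rest on the observation that the edge count of $P_n[\pi_1,\ldots,\pi_i]$ grows at each step by the number of already-revealed neighbors of $\pi_i$ (which is $0$, $1$, or $2$), so that all prefixes are even exactly when every vertex has $0$ or $2$ earlier neighbors, which is the local-extremum (plus endpoint and parity) description of an alternating inverse. Your single chain of equivalences is a slightly tidier packaging than the paper's two separate inductive claims, but the mathematical content is the same.
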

\begin{proof}
    First assume $\pi^{-1}$ is an alternating permutation (which in particular means $n$ is odd), and define $G_j:=P_n[\pi_1,\ldots,\pi_j]$.
    \begin{claim}
        Fix $1\le i\le n$ and let $j=\pi^{-1}_i$.  If $i$ is even then $e(G_j)-e(G_{j-1})=2$, and if $i$ is odd then $e(G_j)-e(G_{j-1})=0$.
    \end{claim}
    \begin{proof}
        First assume $i$ is even (which in particular means $1<i<n$).  Since $\pi^{-1}$ is an alternating permutation, $j:=\pi^{-1}_i>\pi^{-1}_{i-1},\pi^{-1}_{i+1}$.  This means both of $i$'s neighbors in $P_n$ (namely $i-1$ and $i+1$) lie in $\{\pi_1,\ldots,\pi_{j-1}\}$, so $e(G_j)-e(G_{j-1})=2$ as claimed.  

        Now assume $i$ is odd.  Because $\pi^{-1}$ is an alternating permutation, $\pi^{-1}_i$ is less than all of the neighbors of $i$ in $P_n$, so $e(G_j)-e(G_{j-1})=0$ as desired.
    \end{proof}
    Because $e(G_j)-e(G_{j-1})$ is even for all $j$, and since $e(G_1)=0$ is even, we conclude that $e(G_j)$ is even for all $j$, proving that $\pi$ is an even sequence.

    Now assume $\pi$ is an even sequence, i.e.\ that the induced subgraphs $G_j:=P_n[\pi_1,\ldots,\pi_j]$ have an even number of edges for all $j$.  In particular, $n$ must be odd in order for $G_n=P_n$ to have an even number of edges.
    
    \begin{claim}
        For all $1\le i<n$, if $i$ is odd then $\pi_i^{-1}<\pi_{i+1}^{-1}$, and otherwise $\pi_i^{-1}>\pi_{i+1}^{-1}$.
    \end{claim}
    \begin{proof}
        We prove the result by induction on $i$ starting with the base case $i=1$.  Assume for contradiction that $\pi_2^{-1}<\pi_1^{-1}:=j$.  This implies $e(G_j)-e(G_{j-1})=1$ (since $\pi_j=1$ has exactly one neighbor amongst the set $\{\pi_1,\ldots,\pi_{j-1}\}$, namely $2$), contradicting $e(G_j),e(G_{j-1})$ both being even.  Thus we must have $\pi_1^{-1}<\pi_2^{-1}$.

        Inductively assume we have proven the result up to some value $i>1$.  If $i$ is odd and $\pi_{i+1}^{-1}<\pi_i^{-1}:=j$, then $e(G_j)-e(G_{j-1})=1$ (since  $\pi_j=i$ has a unique neighbor in $\{\pi_1,\ldots,\pi_{j-1}\}$, namely $\pi_{i+1}^{-1}$ due to the inductive hypothesis $\pi_{i-1}^{-1}>\pi_i^{-1}$), a contradiction.  If $i$ is even and $\pi_{i+1}^{-1}>\pi_i^{-1}:=j$, then again $e(G_j)-e(G_{j-1})=1$ (since  $\pi_j=i$ has a unique neighbor in $\{\pi_1,\ldots,\pi_{j-1}\}$, namely $\pi_{i-1}^{-1}$ due to the inductive hypothesis $\pi_{i-1}^{-1}<\pi_i^{-1}$).  With this we conclude the claim.
    \end{proof}
    This claim, together with the observation that $n$ must be odd, shows that $\pi^{-1}$ is an alternating permutation, completing the proof.
\end{proof}


\subsection{Basic Properties of $\nu(G)$}\label{subsec:basic}
In this subsection we prove several basic properties of $\nu(G)$, some of which will be needed for the proof of \Cref{thm:nuEta}.   We begin with a basic but important observation.
\begin{lemma}\label{prop odd edges to evaluation at -1}
    If $G$ has an odd number of edges, then $\nu(G)=0$.
\end{lemma}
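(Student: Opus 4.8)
The plan is to deduce this immediately from the palindromicity of $\A_D(t)$ established in \Cref{prop basic facts}(a). Fix any orientation $D$ of $G$, and let $m=e(D)=e(G)$, which is odd by hypothesis. By \Cref{prop basic facts}(a), if we write $\A_D(t)=\sum_k a(k)t^k$, then $a(k)=a(m-k)$ for all $k$, which is equivalent to the functional equation
\[
\A_D(t)=t^m\,\A_D(1/t).
\]

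Now I would simply substitute $t=-1$ into this identity. Since $m$ is odd, $(-1)^m=-1$, so the identity gives $\A_D(-1)=-\A_D(-1)$, hence $2\A_D(-1)=0$ and therefore $\A_D(-1)=0$. Consequently $\nu(G)=|\A_D(-1)|=0$, as claimed. (Equivalently, one can argue term-by-term: pairing the coefficient $a(k)$ with $a(m-k)=a(k)$, the corresponding monomials contribute $a(k)((-1)^k+(-1)^{m-k})=0$ because $k$ and $m-k$ have opposite parity when $m$ is odd, and no term is left unpaired since $m/2\notin\Z$.)

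There is no real obstacle here: the only input needed is the palindromicity of $\A_D(t)$ with center $m/2$, which has already been proved, and the parity observation that an odd-degree palindrome vanishes at $-1$. The one point worth stating carefully is that $\nu(G)$ is well-defined independent of the orientation $D$ by \Cref{prop:negOne}, so it suffices to verify $\A_D(-1)=0$ for a single orientation.
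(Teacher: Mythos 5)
Your proof is correct and follows essentially the same route as the paper: both invoke the palindromicity of $\A_D(t)$ from \Cref{prop basic facts}(a) and observe that $(-1)^k=-(-1)^{m-k}$ when $m$ is odd, forcing $\A_D(-1)=0$. The functional-equation phrasing $\A_D(t)=t^m\A_D(1/t)$ is just a repackaging of the same coefficient-pairing argument, so there is nothing substantively different here.
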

\begin{proof}
    Let $m=e(G)$ and $D$ be any orientation of $G$.  By Proposition~\ref{prop basic facts}(a), $\A_D(t) $ is palindromic with center $(m-1)/2$.  Since $(-1)^k = -(-1)^{m - k}$ for $m$ odd, it follows that $\A_D(-1)=0$.
\end{proof}

The remainder of our proofs for this section rely heavily on the following special case of \Cref{lemma split into subgraphs}.

 \begin{cor}\label{lemma remove a vertex}
     If $D$ is an $n$-vertex digraph, then
     \[
        \A_{D}(t) = \sum_{v\in V} \frac{t^{\deg_D^+(v)} + t^{\deg_D^-(v)}}{2}\A_{D - v}(t)
     \]
 \end{cor}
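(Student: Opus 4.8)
The plan is to derive \Cref{lemma remove a vertex} directly from \Cref{lemma split into subgraphs} by specializing to the case $k=n-1$. First I would observe that subsets $S \in \binom{V}{n-1}$ are in bijection with vertices $v \in V$ via $S = V \setminus \{v\}$, so that the sum $\sum_{S \in \binom{V}{n-1}}$ becomes $\sum_{v \in V}$. Under this correspondence, $D[S] = D - v$ and $D - S = D[\{v\}]$, and since the single-vertex digraph has exactly one permutation with no descents, $\A_{D-S}(t) = \A_{D[\{v\}]}(t) = 1$.

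Next I would identify the exponents $e_D(S, \overline{S})$ and $e_D(\overline{S}, S)$ in this special case. Here $\overline{S} = \{v\}$, so $e_D(S, \overline{S})$ counts arcs with tail in $V \setminus \{v\}$ and head equal to $v$; this is precisely the in-degree $\deg_D^-(v)$. Symmetrically, $e_D(\overline{S}, S)$ counts arcs with tail $v$ and head in $V \setminus \{v\}$, which is the out-degree $\deg_D^+(v)$. Substituting these identifications into the statement of \Cref{lemma split into subgraphs} with $k = n-1$ gives
\[
\A_D(t) = \sum_{v \in V} \frac{t^{\deg_D^+(v)} + t^{\deg_D^-(v)}}{2} \A_{D-v}(t) \cdot 1,
\]
which is exactly the claimed identity.

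Since this is purely a matter of bookkeeping with definitions, there is no real obstacle; the only point requiring a moment of care is matching the direction conventions so that $e_D(S,\overline S)$ pairs with $\deg^-$ and $e_D(\overline S, S)$ pairs with $\deg^+$ (or vice versa), but by the palindromic symmetry of the summand $\frac{t^{a}+t^{b}}{2}$ the two choices yield the same expression anyway, so even a sign/direction slip is harmless. One could also note that $k=1$ in \Cref{lemma split into subgraphs} gives the same formula with the roles of $\A_{D[S]}$ and $\A_{D-S}$ swapped, providing a consistency check.
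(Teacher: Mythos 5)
Your proof is correct and takes essentially the same route as the paper, which specializes \Cref{lemma split into subgraphs} to $k=1$ (so $S=\{v\}$) rather than your $k=n-1$ (so $\overline{S}=\{v\}$); by the symmetry of the summand $\tfrac{t^a+t^b}{2}$ the two choices are interchangeable, as you note yourself in your consistency check.
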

 \begin{proof}
     Applying \Cref{lemma split into subgraphs} for $k = 1$ gives
     \[
         \A_D(t) = \sum_{S\in \binom{V}{1}} \frac{t^{e_D(S,\overline{S})} + t^{e_D(\overline{S},S)}}{2}\A_{D[S]}(t)\A_{D - S}(t).
     \]
     Note that for $S=\{v\}$, we have $e_D(S,\overline{S}) = \deg_D^+(v)$,  $e_D(\overline{S},S) = \deg_D^-(v)$, and $A_{D[S]}(t)=1$, giving the result.
 \end{proof}

For ease of notation, we will often write the summation symbol $\sum_{v\in V}$ in \Cref{lemma remove a vertex} simply as $\sum_v$ or even $\sum$.  This result immediately gives the following. 
\begin{cor}\label{cor:nuInequality}
    We have $\nu(G)\le \sum_v \nu(G-v)$.
\end{cor}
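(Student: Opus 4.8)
The plan is to deduce $\nu(G) \le \sum_v \nu(G-v)$ directly from \Cref{lemma remove a vertex} by substituting $t = -1$ and applying the triangle inequality. First I would fix an arbitrary orientation $D$ of $G$ and apply \Cref{lemma remove a vertex} at $t = -1$, which yields
\[
\A_D(-1) = \sum_{v \in V} \frac{(-1)^{\deg_D^+(v)} + (-1)^{\deg_D^-(v)}}{2}\,\A_{D-v}(-1).
\]
The key observation is that the coefficient $\frac{(-1)^{\deg_D^+(v)} + (-1)^{\deg_D^-(v)}}{2}$ is always either $0$, $1$, or $-1$ (it is $\pm 1$ when $\deg_D^+(v)$ and $\deg_D^-(v)$ have the same parity and $0$ otherwise); in particular its absolute value is at most $1$. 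Taking absolute values and using the triangle inequality gives
\[
|\A_D(-1)| \le \sum_{v \in V} \left| \frac{(-1)^{\deg_D^+(v)} + (-1)^{\deg_D^-(v)}}{2} \right| \cdot |\A_{D-v}(-1)| \le \sum_{v \in V} |\A_{D-v}(-1)|.
\]
Finally, since $D - v$ is an orientation of $G - v$, \Cref{prop:negOne} gives $|\A_{D-v}(-1)| = \nu(G-v)$, and likewise $|\A_D(-1)| = \nu(G)$, so the chain of inequalities becomes $\nu(G) \le \sum_v \nu(G-v)$, as desired.

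This argument is entirely routine — there is no real obstacle, since all the work has been done in \Cref{lemma remove a vertex} and \Cref{prop:negOne}. The only point worth stating carefully is why the per-vertex coefficient has absolute value at most $1$, which is just the elementary fact that $\frac{a+b}{2} \in \{-1, 0, 1\}$ whenever $a, b \in \{-1, 1\}$. If one wished to be slightly more precise, one could note that equality $\nu(G) = \sum_v \nu(G-v)$ forces, among other things, that every vertex has $\deg^+_D(v) \equiv \deg^-_D(v) \pmod 2$ (equivalently every vertex of $G$ has even degree) and that the signed terms all align, but this refinement is not needed for the stated inequality.
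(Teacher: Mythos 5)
Your proof is correct and is essentially identical to the paper's: both apply \Cref{lemma remove a vertex} at $t=-1$, note the coefficients lie in $\{-1,0,1\}$, and conclude by the triangle inequality together with \Cref{prop:negOne}. No issues.
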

\begin{proof}
    When $t=-1$, each term in the summation of \Cref{lemma remove a vertex} has coefficients in $\{-1,0,1\}$.  Taking absolute values on both sides and using the triangle inequality gives for any orientation $D$ of $G$ that
    \[\nu(G)=|A_D(-1)|\le \sum_v |A_{D-v}(-1)|=\sum_v \nu(G-v).\]
\end{proof}

An analog of the result above holds for even sequences.
\begin{lemma}\label{lem:etaRecurrence}
    If $G$ is a graph with an odd number of edges, then $\eta(G)=0$.  Otherwise $\eta(G)=\sum_v \eta(G-v)$.
\end{lemma}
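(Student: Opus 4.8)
The statement has two parts. The first part---that $\eta(G)=0$ when $e(G)$ is odd---is essentially immediate: an even sequence of $G$ requires $G[\pi_1,\dots,\pi_n]=G$ itself to have an even number of edges, so if $e(G)$ is odd there are no even sequences at all. The real content is the recurrence $\eta(G)=\sum_{v}\eta(G-v)$ when $e(G)$ is even, and the plan is to prove it by a direct bijective/counting argument that classifies even sequences according to their \emph{last} entry $\pi_n$.

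First I would fix $v\in V(G)$ and examine those even sequences $\pi=(\pi_1,\dots,\pi_n)$ with $\pi_n=v$. For such a sequence, the prefix $(\pi_1,\dots,\pi_{n-1})$ is an ordering of $V(G)\setminus\{v\}=V(G-v)$, and since deleting the last vertex does not change the induced subgraphs $G[\pi_1,\dots,\pi_i]$ for $i\le n-1$, and since $G[\pi_1,\dots,\pi_i]=(G-v)[\pi_1,\dots,\pi_i]$ for all $i \le n-1$ (removing $v$ only removes edges incident to $v$, none of which appear among the first $n-1$ vertices when $\pi_n = v$), the prefix is an even sequence of $G-v$ \emph{provided} that each of these prefix subgraphs has an even number of edges. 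The key observation is the converse: if $(\pi_1,\dots,\pi_{n-1})$ is any even sequence of $G-v$, then appending $v$ gives an even sequence of $G$, because the only extra condition to check is that $G[\pi_1,\dots,\pi_n]=G$ has an even number of edges, which holds by the hypothesis $e(G)$ is even. Thus the map $\pi\mapsto(\pi_1,\dots,\pi_{n-1})$ is a bijection between $\{$even sequences of $G$ ending in $v\}$ and $\{$even sequences of $G-v\}$.

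Summing over all choices of the last vertex $v$ then gives
\[
\eta(G)=\sum_{v\in V(G)}\#\{\text{even sequences of }G\text{ ending in }v\}=\sum_{v\in V(G)}\eta(G-v),
\]
as desired.

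\textbf{Main obstacle.} There is no serious obstacle here; the argument is routine once one decides to condition on the last entry rather than the first. The one point requiring a moment's care is verifying that the identity $G[\pi_1,\dots,\pi_i]=(G-v)[\pi_1,\dots,\pi_i]$ holds for every $i\le n-1$ when $\pi_n=v$ (so that the ``even number of edges'' conditions for the prefix genuinely match between $G$ and $G-v$), and noting that the single remaining top-level condition for $\pi$ to be an even sequence of $G$---namely $e(G[\pi_1,\dots,\pi_n])=e(G)$ even---is exactly the hypothesis in force. This parallels the structure of \Cref{cor:nuInequality} and \Cref{lemma remove a vertex}, except that here the correspondence is an exact bijection rather than an inequality, which is why $\eta$ satisfies an equality where $\nu$ only satisfies $\nu(G)\le\sum_v\nu(G-v)$.
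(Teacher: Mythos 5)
Your proof is correct and follows exactly the paper's argument: the odd case is immediate since $G[\pi_1,\dots,\pi_n]=G$, and the even case conditions on the last entry $\pi_n=v$ and identifies even sequences of $G$ ending in $v$ with even sequences of $G-v$. The paper states this same bijection more tersely, so your write-up just adds detail to the identical approach.
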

\begin{proof}
    If $e(G)$ is odd then there exist no even sequences (since $e(G[\pi_1,\ldots,\pi_n])=e(G)$ is always odd), so $\eta(G)=0$.  Assume now that $e(G)$ is even and let $\eta_v(G)$ denote the number of even sequences of $G$ with $v_n=v$.  Then  $\eta(G)=\sum_v \eta_v(G)$, and it is not difficult to see $\eta_v(G)=\eta(G-v)$ (since $e(G[\pi_1,\ldots,\pi_{n-1},v])=e(G)$ is even for any permutation $\pi$ of $V(G)-v$). This gives the result.
\end{proof}

Finally, we introduce two graph operations that play nicely with $\nu(G)$.  Given a set of graphs $G_1,\ldots,G_r$ on disjoint vertex sets, the \textit{join} $\bigvee_{i=1}^r G_i$ is the graph obtained by taking the disjoint union of the $G_i$ graphs and then adding all possible edges between each of the $G_i$ graphs.  Given graphs $G_1,G_2$ and a root vertex $v\in G_2$, the \textit{rooted product graph} $G_1\circ_v G_2$ is obtained by gluing a copy of $G_2$ at $v$ to each vertex of $G_1$.  With all this established, we can state the following results involving $\nu(G)$.

\begin{prop}\label{prop:nuGeneral}
    Let $G$ be an $n$-vertex graph.
    \begin{enumerate}
        \item[(a)] We have $\nu(G)\le \eta(G)$.
        \item[(b)] We have $\nu(G)\le \sum_{v} \nu(G-v)$.
        \item[(c)] If $G=\bigsqcup_{i=1}^r G_i$ is a disjoint union of graphs of orders $n_1,\ldots,n_r$, then \[\nu(G)=\binom{n}{n_1,\ldots,n_r} \cdot \prod_{i=1}^r \nu(G_i).\]
        \item[(d)] If $G=\bigvee_{i=1}^r G_i$ is the join of graphs of orders $n_1,\ldots,n_r$, then
        \[\nu(G)=\left|\begin{bmatrix} n\\ n_1,\ldots,n_r\end{bmatrix}_{-1} \right|\cdot \prod_{i=1}^r \nu(G_i).\]
        \item[(e)] If $G=G_1\circ_v G_2$ with $|V(G_i)|=n_i$, then
        \[\nu(G)=\frac{1}{n_1!}{n_1n_2\choose n_2,\ldots,n_2} \nu(G_1)\nu(G_2)^{n_1}.\]
    \end{enumerate}
\end{prop}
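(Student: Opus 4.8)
The plan is to deduce each item from the structural identities for $\A_D(t)$ already proven, by first reducing to an orientation $D$ of $G$ and then specializing the relevant polynomial identity at $t=-1$. For part (a), I would proceed by induction on $n$ using the recursive structure. When $e(G)$ is odd, $\nu(G)=\eta(G)=0$ by \Cref{prop odd edges to evaluation at -1} and \Cref{lem:etaRecurrence}, so we may assume $e(G)$ is even. In that case, every edge $\deg_D^+(v)+\deg_D^-(v)=\deg_G(v)$ has fixed parity, and I would want to compare $\A_D(-1)=\sum_v \tfrac{(-1)^{\deg_D^+(v)}+(-1)^{\deg_D^-(v)}}{2}\A_{D-v}(-1)$ (from \Cref{lemma remove a vertex}) against $\eta(G)=\sum_v\eta(G-v)$ (from \Cref{lem:etaRecurrence}). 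The coefficient $\tfrac{(-1)^{\deg_D^+(v)}+(-1)^{\deg_D^-(v)}}{2}$ is $\pm1$ when $\deg_G(v)$ is even and $0$ when $\deg_G(v)$ is odd; meanwhile, when $\deg_G(v)$ is odd the graph $G-v$ has odd edge count so $\nu(G-v)=\eta(G-v)=0$ and that term drops from both sums. Thus $\nu(G)\le\sum_{v:\,\deg_G(v)\ \mathrm{even}}|\A_{D-v}(-1)|=\sum_{v}\nu(G-v)$, and by the inductive hypothesis $\nu(G-v)\le\eta(G-v)$, giving $\nu(G)\le\sum_v\eta(G-v)=\eta(G)$. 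Note this simultaneously proves part (b), which is exactly \Cref{cor:nuInequality}.

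For parts (c), (d), and (e), the strategy is uniform: apply the corresponding factorization of $\A_D(t)$ with $D$ an orientation of $G$, then take absolute values at $t=-1$. For (c), \Cref{prop basic facts}(c) gives $\A_D(t)=\binom{n}{n_1,\dots,n_r}\prod_i\A_{D_i}(t)$, and since $\binom{n}{n_1,\dots,n_r}$ is a nonnegative integer, evaluating at $-1$ and taking absolute values yields $\nu(G)=\binom{n}{n_1,\dots,n_r}\prod_i\nu(G_i)$; here one uses that $D_i$ is an orientation of $G_i$ so $|\A_{D_i}(-1)|=\nu(G_i)$ by \Cref{prop:negOne}. For (d), I would fix an orientation $D_i$ of each $G_i$ and then choose the orientation $D$ of $G=\bigvee G_i$ that restricts to $D_i$ on each block and orients all cross-edges ``from $V_i$ to $V_j$ for $i<j$''; this is precisely the digraph of \Cref{prop basic facts}(d), so $\A_D(t)=\qbinom{n}{n_1,\dots,n_r}{t}\prod_i\A_{D_i}(t)$, and taking $|\cdot|$ at $t=-1$ gives the claimed formula. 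For (e), I would similarly realize an orientation of $G_1\circ_v G_2$ as $D_1\circ_v D_2$ for orientations $D_i$ of $G_i$ and apply \Cref{prop: rooted product graphs}: $\A_{D_1\circ_v D_2}(t)=\frac{1}{n_1!}\binom{n_1n_2}{n_2,\dots,n_2}\A_{D_1}(t)\A_{D_2}(t)^{n_1}$, and since $\frac{1}{n_1!}\binom{n_1n_2}{n_2,\dots,n_2}$ is a (nonnegative) integer, evaluating at $-1$ and taking absolute values gives (e).

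The only genuinely delicate point is part (a): unlike (c)--(e), which are immediate corollaries of already-stated polynomial identities, (a) requires the inductive bookkeeping above, and in particular the observation that the vanishing of $\nu(G-v)$ exactly when $\deg_G(v)$ is odd is what lets us discard the ``bad'' terms rather than needing to bound them. A secondary subtlety across (c)--(e) is making sure the multinomial-type prefactors are nonnegative (hence survive the absolute value cleanly): the ordinary multinomials in (c) are manifestly nonnegative integers, the quantity in (e) is a nonnegative integer because it counts ordered set partitions modulo relabeling the blocks (as in the proof of \Cref{prop: rooted product graphs}), and in (d) we simply absorb the sign of $\qbinom{n}{n_1,\dots,n_r}{-1}$ into the absolute value, which is why the $|\cdot|$ appears explicitly in the statement. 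No new ideas beyond the cited results are needed.
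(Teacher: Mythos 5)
Your proposal is correct and takes essentially the same route as the paper: parts (b)--(e) are read off from \Cref{cor:nuInequality}, \Cref{prop basic facts}, and \Cref{prop: rooted product graphs} by evaluating at $t=-1$ and taking absolute values, and part (a) is proved by induction using the vertex-removal bound $\nu(G)\le\sum_v\nu(G-v)$ together with the recurrence $\eta(G)=\sum_v\eta(G-v)$ from \Cref{lem:etaRecurrence}. The extra bookkeeping you do in (a) about the terms with $\deg_G(v)$ odd vanishing is already subsumed in the paper's triangle-inequality argument for \Cref{cor:nuInequality}, so nothing new is needed.
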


\begin{proof} 
Note that (b) is \Cref{cor:nuInequality}, (c) and (d) follow from \Cref{prop basic facts}, and (e) follows from \Cref{prop: rooted product graphs}.  It thus remains to prove (a), which we do by induction on $V(G)$.

The base case is trivial, so assume we have proven the result up to some order $n$ and let $G$ be a graph of order $n$.  If $G$ has an odd number of edges then $\nu(G)=\eta(G)=0$ by Lemmas~\ref{prop odd edges to evaluation at -1} and \ref{lem:etaRecurrence}.  Otherwise by \Cref{cor:nuInequality},
    \[\nu(G)\le \sum_{v} \nu(G-v)\le \sum_{v} \eta(G-v)=\eta(G),\]
    where the second inequality used the inductive hypothesis and the equality used \Cref{lem:etaRecurrence}.
\end{proof}
\Cref{prop:nuGeneral} has a number of nice consequences.  For example, (c) and (d) imply that to determine $\nu(G)$ for all graphs $G$, it suffices to do this for graphs $G$ such that $G$ and its complement are both connected.  We also have the following immediate consequences.
\begin{cor}\label{cor:nuGeneral}
    Let $G$ be an $n$-vertex graph.
    \begin{enumerate}
        \item[(a)] If $G$ has a component with an odd number of edges, then $\nu(G)=0$.
        \item[(b)] If every vertex of $G$ has odd degree, then $\nu(G)=0$.
        \item[(c)] If $G$ has a vertex $v$ of degree $n-1$,  then $\nu(G)=0$ if $n$ is even, and otherwise $\nu(G)=\nu(G-v)$.
    \end{enumerate}
\end{cor}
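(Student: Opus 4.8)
The plan is to deduce each of (a), (b), (c) directly from \Cref{prop:nuGeneral} and \Cref{lemma remove a vertex}, using nothing new. For (a), I would decompose $G$ into its connected components $G_1,\ldots,G_k$ and apply \Cref{prop:nuGeneral}(c) to get $\nu(G) = \binom{n}{n_1,\ldots,n_k}\prod_{i=1}^k \nu(G_i)$, where $n_i = |V(G_i)|$; if some $G_i$ has an odd number of edges then $\nu(G_i) = 0$ by \Cref{prop odd edges to evaluation at -1}, and hence $\nu(G) = 0$.

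For (b), I would fix an arbitrary orientation $D$ of $G$ and evaluate the identity of \Cref{lemma remove a vertex} at $t = -1$:
\[
  A_D(-1) = \sum_{v\in V} \frac{(-1)^{\deg_D^+(v)} + (-1)^{\deg_D^-(v)}}{2}\, A_{D-v}(-1).
\]
For each vertex $v$ the hypothesis says $\deg_D^+(v) + \deg_D^-(v) = \deg_G(v)$ is odd, so exactly one of $\deg_D^+(v), \deg_D^-(v)$ is even; therefore $(-1)^{\deg_D^+(v)} + (-1)^{\deg_D^-(v)} = 1 + (-1) = 0$ and every summand vanishes, so $\nu(G) = |A_D(-1)| = 0$.

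For (c), I would observe that a vertex $v$ of degree $n-1$ is adjacent to all other vertices, so $G$ is the join $\{v\} \vee (G-v)$ of the one-vertex graph with $G - v$. Applying \Cref{prop:nuGeneral}(d) with $r = 2$, $n_1 = 1$, $n_2 = n-1$, and using that the one-vertex graph has $\nu = 1$, gives
\[
  \nu(G) = \left| \qbinom{n}{1,\, n-1}{-1} \right| \cdot \nu(G - v).
\]
Finally, $\qbinom{n}{1,\, n-1}{t} = \frac{[n]_t!}{[1]_t!\,[n-1]_t!} = 1 + t + \cdots + t^{n-1}$, so $\qbinom{n}{1,\,n-1}{-1} = \sum_{k=0}^{n-1}(-1)^k$ equals $0$ for $n$ even and $1$ for $n$ odd, which is exactly the stated dichotomy. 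None of these steps presents a genuine obstacle; the only points that need a moment of care are the parity bookkeeping in (b) and the evaluation of the $t$-multinomial coefficient at $-1$ in (c), both elementary.
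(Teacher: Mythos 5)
Your proofs of (a) and (c) coincide with the paper's: (a) reduces to \Cref{prop:nuGeneral}(c) plus \Cref{prop odd edges to evaluation at -1}, and (c) writes $G$ as the join $K_1\vee(G-v)$ and evaluates $[n]_{-1}$, exactly as in the paper. For (b) you take a genuinely different route. The paper argues combinatorially on the $\eta$ side: for any ordering $\pi$, one of $G[\pi_1,\ldots,\pi_n]$ and $G[\pi_1,\ldots,\pi_{n-1}]$ has an odd number of edges (their edge counts differ by the odd number $\deg(\pi_n)$), so $\eta(G)=0$, and then $\nu(G)\le\eta(G)$ from \Cref{prop:nuGeneral}(a) finishes. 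You instead evaluate the vertex-deletion identity of \Cref{lemma remove a vertex} at $t=-1$ and note that since $\deg^+_D(v)+\deg^-_D(v)=\deg_G(v)$ is odd, the two exponents have opposite parity and every coefficient $\tfrac{(-1)^{\deg^+_D(v)}+(-1)^{\deg^-_D(v)}}{2}$ vanishes, so $A_D(-1)=0$ outright. Both arguments are correct and about equally short; yours is more self-contained in that it bypasses even sequences and the inequality $\nu\le\eta$ entirely, while the paper's version yields the slightly stronger combinatorial fact that $G$ admits no even sequences at all (i.e.\ $\eta(G)=0$, not merely $\nu(G)=0$), which is in keeping with the paper's theme of matching $\nu$ and $\eta$.
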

\begin{proof}
    Part (a) follows from \Cref{prop:nuGeneral}(c) and the fact that $\nu(G)=0$ whenever $G$ has an odd number of edges by \Cref{prop odd edges to evaluation at -1} (or alternatively by \Cref{prop:nuGeneral}(a)).  

    For (b), we observe that for any ordering $\pi$ of $V(G)$, either the graph $G=G[\pi_1,\ldots,\pi_n]$ or the graph $G[\pi_1,\ldots,\pi_{n-1}]$ has an odd number of edges.  Thus $\eta(G)=0$, and hence $\nu(G)=0$ by \Cref{prop:nuGeneral}(a).  

    For (c), we observe that $G$ is the join of $K_1$ together with $G-v$, so by \Cref{prop:nuGeneral}(d) we have $\nu(G)=|[n]_{-1} |\cdot 1\cdot \nu(G-v)$, and this equals 0 if $n$ is even and otherwise equals $\nu(G-v)$ as desired.
\end{proof}

\subsection{Proof of \Cref{thm:nuEta}}\label{subsec:interpretation}
In this section, we prove \Cref{thm:nuEta}, which we recall says that if $G$ is a graph that is either bipartite, complete multipartite, or a blowup of a cycle, then $\nu(G)=\eta(G)$.

The proofs for each of these cases follows the same basic strategy: We first show that for some ``natural'' orientation $D$ of $G$, we can easily predict the sign of $\A_D(-1)$.  From this we deduce $\nu(G)=\sum \nu(G-v)$, and hence that $\nu(G)=\eta(G)$ since the statistics $\nu,\eta$ satisfy the same recurrence relation.

We begin with the following ``natural'' orientations for bipartite graphs.

\begin{lemma}\label{lem:bipartiteOrientation}
    Let $D$ be a digraph such that one can partition its vertex set into $U\cup V$ such that every arc $u\to v$ of $D$ has $u\in U$ and $v\in V$. Then
    \[\A_D(-1)\ge 0,\]
    and if $D$ has an even number of arcs, then
    \[\A_D(-1)=\sum_{v\in V(D)}\A_{D-v}(-1).\]
\end{lemma}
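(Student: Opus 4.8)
The key idea is that for a digraph $D$ of the specified ``bipartite-oriented'' form, all descents of any permutation $\sigma$ lie within a predictable structure, so we can control the sign of $t^{\des_D(\sigma)}$ at $t=-1$. Given the partition $V(D)=U\cup V$ with all arcs pointing from $U$ to $V$, consider an arbitrary permutation $\sigma\in\mathfrak{S}_D$. An arc $u\to v$ is a descent exactly when $\sigma(u)>\sigma(v)$; since all arcs have their tail in $U$ and head in $V$, a natural thing to try is to relate $\des_D(\sigma)$ to a quantity whose parity is constant across a sign-reversing involution, or to show directly that $\A_D(-1)=\sum_\sigma (-1)^{\des_D(\sigma)}$ has no cancellation of the ``wrong'' sign.

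First I would look for an involution or a clean parity statement. The cleanest route: I expect the authors use the palindromicity from \Cref{prop basic facts}(a) together with a parity argument. Here is the plan I would carry out. Step 1: Reduce to the case where $D$ has an even number of arcs --- if $e(D)$ is odd then $\A_D(-1)=0\ge 0$ by the palindromicity argument (\Cref{prop odd edges to evaluation at -1}), and the recurrence statement is vacuous. Step 2: For $e(D)$ even, show $\A_D(-1)\ge 0$ by exhibiting a sign on each permutation. I would try to prove that $(-1)^{\des_D(\sigma)}$ depends only on $\sigma$ in a way that makes the sum manifestly nonnegative; concretely, I suspect one shows $(-1)^{\des_D(\sigma)} = (-1)^{e_D(U,V)}\cdot(\text{something squared-like})$, or more likely that the permutations split into classes where $\des_D$ has fixed parity and the generating function factors nicely. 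Actually the most promising approach: use \Cref{lemma remove a vertex}, which gives
\[
\A_D(t)=\sum_{v\in V(D)}\frac{t^{\deg_D^+(v)}+t^{\deg_D^-(v)}}{2}\,\A_{D-v}(t).
\]
For $v\in U$ we have $\deg_D^-(v)=0$, and for $v\in V$ we have $\deg_D^+(v)=0$; in either case one of the two exponents is $0$, so the coefficient $\frac{t^{\deg^+}+t^{\deg^-}}{2}$ evaluated at $t=-1$ equals $\frac{1+(-1)^{d}}{2}\in\{0,1\}$ where $d$ is the nonzero degree. Hence at $t=-1$ every surviving term is $\A_{D-v}(-1)$ with coefficient $0$ or $1$, and $D-v$ is again of the bipartite-oriented form (restricting the partition). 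By induction on the number of vertices, $\A_{D-v}(-1)\ge 0$ for every $v$, so $\A_D(-1)\ge 0$; this proves the first claim.

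Step 3: For the recurrence, note that when $e(D)$ is even, I claim the coefficient $\frac{1+(-1)^d}{2}$ is in fact $1$ for every vertex $v$ with nonzero degree $d$, and vacuously the identity $\A_D(-1)=\sum_v \A_{D-v}(-1)$ will follow once I show the parity of $\deg_D(v)$ is even for every $v$ --- but that is false in general, so instead I must handle vertices of odd degree: if $v$ has odd degree, then both $D-v$ and $D$ cannot simultaneously have an even number of edges... wait, $e(D-v)=e(D)-\deg_D(v)$ is odd, so $\A_{D-v}(-1)=0$ by \Cref{prop odd edges to evaluation at -1}, and also the coefficient $\frac{1+(-1)^d}{2}=0$. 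So for odd-degree $v$, the term contributes $0$ to both sides. For even-degree $v$, the coefficient is $1$ and $D-v$ has an even number of edges. Therefore
\[
\A_D(-1)=\sum_{v:\deg_D(v)\text{ even}}\A_{D-v}(-1)=\sum_{v\in V(D)}\A_{D-v}(-1),
\]
the last equality because odd-degree vertices contribute $0$ anyway. This gives the second claim.

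\textbf{Main obstacle.} The only delicate point is the base case and verifying that $D-v$ genuinely inherits the bipartite-oriented structure (it does, trivially, by intersecting the partition with $V(D-v)$), and making sure the induction is set up on the right parameter (number of vertices, with the induced structure passed down). I anticipate essentially no computational difficulty; the whole argument is \Cref{lemma remove a vertex} plus the observation that one of $\deg^+,\deg^-$ vanishes at each vertex, plus \Cref{prop odd edges to evaluation at -1} to kill odd-degree terms. If the intended proof is instead via a direct sign-reversing involution, the obstacle would shift to constructing that involution, but the recurrence-based route above sidesteps that entirely and is the one I would write up.
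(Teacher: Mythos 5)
Your proposal is correct and follows essentially the same route as the paper: apply \Cref{lemma remove a vertex}, observe that one of $\deg^+,\deg^-$ vanishes at every vertex so each coefficient at $t=-1$ lies in $\{0,1\}$, use \Cref{prop odd edges to evaluation at -1} to show odd-degree vertices contribute $0$ to both sides of the recurrence, and induct on $|V(D)|$ for nonnegativity. The only cosmetic difference is that you establish $\A_D(-1)\ge 0$ directly from the $\{0,1\}$ coefficients rather than first deriving the recurrence, but the substance is identical.
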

\begin{proof}
    We first establish the equality for $D$ with an even number of arcs.   By Corollary~\ref{lemma remove a vertex}, we have
    \[\A_D(-1)=\sum_{u\in U}\frac{(-1)^{\deg^+(u)}+1}{2}\A_{D-u}(-1)+\sum_{v\in V}\frac{1+(-1)^{\deg^-(v)}}{2}\A_{D-v}(-1).\]
    We claim that for $u\in U$, we have $\frac{(-1)^{\deg^+(u)}+1}{2}\A_{D-u}(-1)=\A_{D-u}(-1)$.  Indeed, this is immediate if $\deg^+(u)$ is even.  If $\deg^+(u)$ is odd, then since $D$ has an even number of arcs, $D-u$ has an odd number of arcs.  By \Cref{prop odd edges to evaluation at -1}, $\A_{D-u}(-1)=0$, so again the claim trivially holds.  An analogous result holds for the $v$ terms, and applying these claims to the inlined equation above gives the result. 

    We next prove $\A_D(-1)\ge 0$ by using induction on $|V(D)|$, the base case being trivial.  If $D$ has an odd number of arcs then this quantity is 0 by \Cref{prop odd edges to evaluation at -1}, so we may assume $D$ has an even number of arcs.  Thus, by the result proven above, we have
    \[\A_D(-1)=\sum_{v\in V(D)}\A_{D-v}(-1)\ge 0,\]
    with the last step using the inductive hypothesis on each of the digraphs $D-v$ (each of which continues to satisfy the hypothesis of the lemma).  This completes the proof.
\end{proof}
\begin{cor}\label{cor:nuBipartite}
    If $G$ is a bipartite graph with an odd number of edges, then $\nu(G)=0$, and otherwise $\nu(G)=\sum_v \nu(G-v)$.
\end{cor}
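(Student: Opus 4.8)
The plan is to deduce this corollary directly from \Cref{lem:bipartiteOrientation} by choosing the "natural" orientation of a bipartite graph and tracking signs. Since $G$ is bipartite, fix a bipartition $V(G) = U \sqcup W$ so that every edge of $G$ runs between $U$ and $W$, and let $D$ be the orientation of $G$ in which every edge is directed from its endpoint in $U$ to its endpoint in $W$. By \Cref{prop:negOne} we have $\nu(G) = |\A_D(-1)|$, and $D$ satisfies the hypothesis of \Cref{lem:bipartiteOrientation}.

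First I would dispatch the odd case: if $e(G)$ is odd, then \Cref{prop odd edges to evaluation at -1} gives $\nu(G) = 0$, as claimed. So assume $e(G)$ is even. Then \Cref{lem:bipartiteOrientation} yields both $\A_D(-1) \ge 0$ — so that $\nu(G) = |\A_D(-1)| = \A_D(-1)$ — and the recurrence
\[
\A_D(-1) = \sum_{v \in V(D)} \A_{D-v}(-1).
\]

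The key observation is that for each vertex $v$, the digraph $D - v$ is an orientation of $G - v$ which \emph{still} satisfies the hypothesis of \Cref{lem:bipartiteOrientation}: its vertex set partitions as $(U \setminus \{v\}) \sqcup (W \setminus \{v\})$, and every remaining arc still runs from the first part to the second. Hence \Cref{lem:bipartiteOrientation} applies again to give $\A_{D-v}(-1) \ge 0$, so that $\A_{D-v}(-1) = |\A_{D-v}(-1)| = \nu(G - v)$ by \Cref{prop:negOne}. Substituting into the displayed recurrence gives $\nu(G) = \sum_v \nu(G-v)$, completing the proof.

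There is essentially no hard step here; the only thing to get right is the remark that deleting a vertex preserves the "all arcs from $U$ to $W$" structure, which is what lets us remove the absolute value bars uniformly and turn the signed recurrence from \Cref{lem:bipartiteOrientation} into the unsigned recurrence for $\nu$. (One should also note for completeness that the formula $\nu(G) = \sum_v \nu(G-v)$ remains valid even when some $G - v$ has an odd number of edges, since then both $\nu(G-v) = 0$ and $\A_{D-v}(-1) = 0$.)
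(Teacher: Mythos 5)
Your proposal is correct and follows the same route as the paper: handle the odd case via \Cref{prop odd edges to evaluation at -1}, then apply \Cref{lem:bipartiteOrientation} to the natural $U\to W$ orientation to drop the absolute values on both $\A_D(-1)$ and each $\A_{D-v}(-1)$ and convert the signed recurrence into the one for $\nu$. The observation that $D-v$ inherits the hypothesis of the lemma is exactly the point the paper relies on as well.
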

\begin{proof}
    The result when $G$ has an odd number of edges follows from \Cref{prop odd edges to evaluation at -1}, so assume $G$ has an even number of edges, and let $D$ be an orientation of $G$ as in the previous lemma.  Having $\A_D(-1)\ge 0$ implies
    \[\nu(G)=\A_D(-1)=\sum_v \A_{D-v}(-1)=\sum_v\nu(G-v),\]
    where the second equality used the second part of \Cref{lem:bipartiteOrientation} and the last equality used $\nu(G-v)=\A_{D-v}(-1)$ since this latter quantity is non-negative by \Cref{lem:bipartiteOrientation}.
\end{proof}

We next turn to orientations of complete multipartite graphs.  We begin by establishing the following simple criteria for determining if $\nu(G)=0$.

\begin{lemma}\label{lem:multipartiteOdd}
    Let $G$ be a complete multipartite graph on $V_1\cup \cdots \cup V_r$. If $|V_i|$ is odd for at least two values of $i$, then $\nu(G)=\eta(G)=0$.
\end{lemma}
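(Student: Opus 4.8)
The claim is that if $G$ is a complete multipartite graph on parts $V_1, \ldots, V_r$ with at least two parts of odd size, then $\nu(G) = \eta(G) = 0$.

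**Strategy.** The cleanest route is to show $\eta(G) = 0$ directly (i.e. there are no even sequences), and then invoke $\nu(G) \le \eta(G)$ from \Cref{prop:nuGeneral}(a) to conclude $\nu(G) = 0$ as well. Actually, since $\nu, \eta \ge 0$ always, it suffices to prove $\eta(G) = 0$.

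**Main argument for $\eta(G)=0$.** Let $\pi = (\pi_1, \ldots, \pi_n)$ be any ordering of $V(G)$. I want to exhibit some prefix $G[\pi_1, \ldots, \pi_i]$ with an odd number of edges. Say the two odd parts are $V_a$ and $V_b$. Consider the sequence of values $f(i) := e(G[\pi_1, \ldots, \pi_i]) \bmod 2$. When we add vertex $\pi_{i}$, the number of new edges is the number of already-placed vertices lying in parts other than the part containing $\pi_i$; modulo $2$ this is $(i-1) - (\text{number of already-placed vertices in } \pi_i\text{'s own part})$. The plan is to track the parity more carefully: for a vertex $v$ in part $V_j$, adding $v$ after $k_j$ vertices of $V_j$ and $k - k_j$ vertices total outside $V_j$ have been placed (where $k$ is the number placed so far) changes the edge count by $k - k_j$. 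Summing telescopically, $e(G[\pi_1,\ldots,\pi_i]) = \sum_{j} \left( \binom{|S_j|}{1}\text{-type terms}\right)$ — concretely, if $S_j$ is the set of $V_j$-vertices among the first $i$, then the edge count of the prefix is $\sum_{j < j'} |S_j||S_{j'}| = \binom{\sum_j |S_j|}{2} - \sum_j \binom{|S_j|}{2}$. So modulo $2$, we need $\binom{i}{2} \not\equiv \sum_j \binom{|S_j|}{2} \pmod 2$ for some prefix.

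**Finding the bad prefix.** Here is the key step, and where I'd spend the most care. Take $i = n$ first: if $e(G)$ is odd we are done immediately (every prefix of length $n$ is $G$ itself). Otherwise $e(G)$ is even, and I look for an internal prefix. The idea: order so that within the argument we may assume vertices of $V_a$ all come before vertices of $V_b$ in $\pi$ is NOT allowed (we must handle arbitrary $\pi$), so instead track parity as we go. Let $i^\star$ be the position of the \emph{last} vertex of $V_a \cup V_b$ in $\pi$, and let $i^{\star\star}$ be the position just before the last vertex of whichever of $V_a, V_b$ finishes first. Between these two positions the parity contribution is controlled: each vertex added in that range lies in a part other than $V_a \cup V_b$ or is the last part to finish... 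This is getting delicate; the honest plan is to compute $e(\text{prefix of length } i)$ modulo $2$ as $\binom{i}{2} - \sum_j \binom{|S_j^{(i)}|}{2} \pmod 2$ and argue that because $|V_a|, |V_b|$ are both odd, as we scan $i$ from $0$ to $n$ the quantity $\sum_j \binom{|S_j^{(i)}|}{2} \bmod 2$ and $\binom{i}{2} \bmod 2$ cannot stay in lockstep (equal) the whole way; a parity/counting argument using that $\binom{m}{2}$ is odd iff $m \equiv 2,3 \pmod 4$ should force a discrepancy at some step. I expect the main obstacle to be packaging this scan cleanly — possibly it is easier to induct on $r$ using \Cref{prop:nuGeneral}(d) (join structure: $G = V_a \vee V_b \vee (\text{rest})$, and $|[n]_{-1}|$-type multinomial factors vanish when two arguments are odd since $\qbinom{p+q}{p}{-1} = 0$ when $p,q$ both odd), which sidesteps the combinatorial scan entirely. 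I would likely present the join-based proof: write $G$ as a join involving two odd-order complete multipartite pieces, apply \Cref{prop:nuGeneral}(d), and observe the $t$-binomial coefficient $\qbinom{|V_a|+|V_b|}{|V_a|}{-1}$ evaluates to $0$ because a $q$-binomial at $q=-1$ with both entries odd vanishes (a standard fact: $\qbinom{m}{k}{-1} = \binom{\lfloor m/2\rfloor}{\lfloor k/2 \rfloor}$ if not ($m$ even, $k$ odd), and $=0$ in that excluded case — and one checks $|V_a|, |V_b|$ odd falls into a vanishing regime). This makes the proof two lines.
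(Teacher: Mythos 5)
There is a genuine gap. The lemma asserts two things, $\nu(G)=0$ \emph{and} $\eta(G)=0$, and your final proposed proof (the join-based one via \Cref{prop:nuGeneral}(d) and the vanishing of the $t$-multinomial coefficient at $t=-1$) only addresses $\nu(G)$. It says nothing about even sequences, so $\eta(G)=0$ is left unproved — and that is the part of the statement that actually carries the weight here, since the paper deduces $\nu(G)=0$ from $\eta(G)=0$ via $\nu(G)\le\eta(G)$, not the other way around. (The $\eta(G)=0$ half is also what is needed later when concluding $\nu(G)=\eta(G)$ in \Cref{thm:nuEta} for multipartite graphs with two odd parts.) Your first attempt does aim at $\eta(G)=0$ and correctly writes the prefix edge count as $\sum_{j<j'}|S_j||S_{j'}|$, but you abandon the scan exactly where the one missing idea is needed: instead of tracking parities of $\binom{i}{2}-\sum_j\binom{|S_j|}{2}$ step by step, take the \emph{smallest} index $j$ at which at least two parts have odd intersection with the prefix $\{\pi_1,\dots,\pi_j\}$ (such a $j$ exists because it holds at $j=n$ by hypothesis). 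Minimality forces \emph{exactly} two parts to have odd intersection at that moment, whence exactly one term of $\sum_{i<i'}|S_i||S_{i'}|$ is odd and the prefix has an odd number of edges, contradicting $\pi$ being an even sequence. That single observation closes the argument cleanly; no telescoping parity bookkeeping is required.

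Two smaller remarks. First, your claimed evaluation of the $t$-multinomial at $-1$ cannot be read off from the defining formula, since $[n]_{-1}!=0$ for $n\ge 2$ (every even factor $[2k]_{-1}$ vanishes), so both numerator and denominator are $0$; one must use the closed form $\qbinom{m}{k}{-1}$, which the paper itself only derives \emph{after} this lemma (\Cref{cor:multinomial}, whose proof uses \Cref{thm:nuEta}). Citing it as an external standard fact is defensible but sits awkwardly in the paper's logical order. Second, even granting that fact, the two-line join proof is a correct alternative derivation of $\nu(G)=0$ only; it cannot be repaired into a proof of the full statement without the even-sequence argument above.
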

\begin{proof}
    We will show in this case that $\eta(G)=0$, i.e.\ that there exist no even sequences for $G$.  From this it will follow from $\nu(G)\le \eta(G)$ of \Cref{prop:nuGeneral}(a) that $\nu(G)=0$ as well.

    Assume for contradiction that $\pi$ is an even sequence of $G$.  Let $j$ be the smallest integer such that $|V_i\cap \{\pi_1,\ldots,\pi_j\}|$ is odd for at least two values of $i$, noting that such a (smallest) integer exists since this holds for $j=n$ by hypothesis.  Since $j$ is the smallest integer with this property, there must be exactly two integers $i$ such that $|V_i \cap \{\pi_1,\ldots,\pi_j\}|$ is odd, say this holds for $i=a,b$.  Since $G$ is complete multipartite, the number of edges of $G[\pi_1,\ldots,\pi_j]$ is exactly
    \[\sum_{i<i'} |V_i\cap \{\pi_1,\ldots,\pi_j\}|\cdot |V_{i'}\cap \{\pi_1,\ldots,\pi_j\}|.\]
    Exactly one term in this sum is odd, namely the one with $\{i,i'\}=\{a,b\}$.  This implies $G[\pi_1,\ldots,\pi_j]$ has an odd number of edges, contradicting $\pi$ being an even sequence.
\end{proof}
We next turn to the ``natural'' orientation of complete multipartite graphs.
\begin{lemma}\label{lem:multipartiteOrientation}
    Let $D$ be a digraph with vertex set $V_1\cup \cdots \cup V_r$ and arcs $u\to v$ if and only if $u\in V_i,\ v\in V_j$ and $i<j$.  Then \[\A_D(-1)\ge 0,\]
    and if $|V_i|$ is odd for at most one value of $i$, then
    \[\A_D(-1)=\sum_v \A_{D-v}(-1).\]
\end{lemma}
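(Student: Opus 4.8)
The plan is to mimic the structure of the proof of Lemma~\ref{lem:bipartiteOrientation}, using Corollary~\ref{lemma remove a vertex} to peel off a vertex and controlling the signs $t^{\deg^+_D(v)}$, $t^{\deg^-_D(v)}$ at $t=-1$. Order the vertices so that a vertex $v\in V_i$ has all of its out-neighbors in $\bigcup_{j>i}V_j$ and all of its in-neighbors in $\bigcup_{j<i}V_j$; hence $\deg^+_D(v)=\sum_{j>i}|V_j|$ and $\deg^-_D(v)=\sum_{j<i}|V_j|$, quantities that depend only on the part index $i$, not on $v$. Write $n=\sum_i|V_i|$. Applying Corollary~\ref{lemma remove a vertex},
\[
\A_D(t)=\sum_{i=1}^r\sum_{v\in V_i}\frac{t^{\sum_{j>i}|V_j|}+t^{\sum_{j<i}|V_j|}}{2}\,\A_{D-v}(t).
\]
First I would prove $\A_D(-1)\ge 0$ by induction on $|V(D)|$ exactly as in Lemma~\ref{lem:bipartiteOrientation}: the base case is trivial; if $e(D)$ is odd then $\A_D(-1)=0$ by Lemma~\ref{prop odd edges to evaluation at -1}; and if $e(D)$ is even, I would argue that each coefficient $\tfrac{(-1)^{\sum_{j>i}|V_j|}+(-1)^{\sum_{j<i}|V_j|}}{2}$ is either $0$, $1$, or $-1$, and in the $-1$ case the corresponding $\A_{D-v}(-1)$ vanishes. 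Concretely, when $e(D)$ is even, $\sum_{j>i}|V_j|$ and $\sum_{j<i}|V_j|$ have the same parity (their sum is $n-|V_i|$, and $e(D)=\sum_{i<i'}|V_i||V_{i'}|$ being even forces the needed parity bookkeeping; I will check this directly); if that common parity is even the coefficient is $1$, and if it is odd the coefficient is $-1$ but then $D-v$, having an odd number of arcs, contributes $\A_{D-v}(-1)=0$. Either way the term equals $\A_{D-v}(-1)$, which is $\ge 0$ by induction (each $D-v$ still has the required layered orientation), so $\A_D(-1)\ge 0$.

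Next I would establish the recurrence under the hypothesis that $|V_i|$ is odd for at most one $i$. The key point is that in this regime, for \emph{every} vertex $v$ the coefficient $\tfrac{t^{\deg^+}+t^{\deg^-}}{2}$ evaluates to $1$ at $t=-1$: if $v\in V_i$, then $\deg^+_D(v)=\sum_{j>i}|V_j|$ is even because all parts other than possibly $V_i$ are even, and likewise $\deg^-_D(v)=\sum_{j<i}|V_j|$ is even. Hence every summand in the displayed identity, evaluated at $-1$, is just $\A_{D-v}(-1)$, giving
\[
\A_D(-1)=\sum_{v}\A_{D-v}(-1)
\]
immediately, with no need to invoke Lemma~\ref{prop odd edges to evaluation at -1} for this half. (One should note that when $|V_i|$ is odd for at most one $i$ and $e(D)$ happens to be odd, both sides are $0$, so the identity is vacuous but still correct.)

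The main obstacle is the parity argument inside the $\A_D(-1)\ge 0$ part: one must verify that when $e(D)$ is even, the exponents $\sum_{j>i}|V_j|$ and $\sum_{j<i}|V_j|$ are congruent mod $2$, so that the coefficient is never $\pm\tfrac12$ but always an integer, and that in the $-1$ case the deleted digraph has an odd edge count. This is a short computation — writing $a_i=\sum_{j<i}|V_j|$, $b_i=\sum_{j>i}|V_j|$, one has $a_i+b_i=n-|V_i|$ and $e(D-v)=e(D)-a_i-b_i$ for $v\in V_i$, so $a_i+b_i\equiv e(D)-e(D-v)$; combining with the fact that $e(D)$ even makes $e(D-v)$ and $a_i+b_i$ have the same parity, and $a_i\equiv b_i\pmod 2$ exactly when $a_i+b_i$ is even, which is where the case split lives — but it is the only place the argument is not a verbatim copy of Lemma~\ref{lem:bipartiteOrientation}, so I would present it carefully. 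Everything else follows the bipartite template, and the downstream corollary ($\nu(G)=\sum_v\nu(G-v)$ for complete multipartite $G$, hence $\nu(G)=\eta(G)$ by induction using Lemma~\ref{lem:etaRecurrence}) goes through just as in Corollary~\ref{cor:nuBipartite}, using Lemma~\ref{lem:multipartiteOdd} to handle the case of two or more odd parts.
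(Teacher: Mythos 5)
There is a genuine gap in both halves of your argument, and it is the same gap twice: you assume that the only part whose size can be odd is the part containing the vertex $v$ being deleted, and you try to get all the vanishing you need from \Cref{prop odd edges to evaluation at -1} (odd arc count) alone. Neither holds. For the recurrence half, your claim that ``for every vertex $v$ the coefficient evaluates to $1$'' is false: take $r=2$, $|V_1|=1$, $|V_2|=2$ and $v\in V_2$. Then $\deg^-(v)=|V_1|=1$ is odd, so the coefficient is $\tfrac{(-1)^0+(-1)^1}{2}=0$, not $1$. The hypothesis only says \emph{some} part may be odd, not that it is $V_i$; if the odd part $V_{i_0}$ lies above or below $V_i$, then $\deg^+(v)$ or $\deg^-(v)$ is odd. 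The identity $\A_D(-1)=\sum_v\A_{D-v}(-1)$ still holds, but only because in that situation $|V_i|$ is forced to be even, so $D-v$ has the two odd parts $V_i-v$ and $V_{i_0}$, and hence $\A_{D-v}(-1)=0$ by \Cref{lem:multipartiteOdd} --- which is exactly the lemma you said this half does not need. The paper's proof is precisely this dichotomy: either both degrees of $v$ are even, or $\A_{D-v}(-1)=0$ via \Cref{lem:multipartiteOdd}.

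The nonnegativity half has the analogous error. When $a_i=\sum_{j<i}|V_j|$ and $b_i=\sum_{j>i}|V_j|$ are both odd (coefficient $-1$), you assert that $D-v$ has an odd number of arcs; but $a_i+b_i$ is then even, so with $e(D)$ even you get $e(D-v)=e(D)-a_i-b_i$ \emph{even}, and \Cref{prop odd edges to evaluation at -1} gives you nothing. Concretely, with part sizes $(1,2,1,1,1)$ and $v\in V_2$ one has $a_2=1$, $b_2=3$, coefficient $-1$, and $e(D-v)=10$. The term still vanishes, but the correct reason is again \Cref{lem:multipartiteOdd}: $a_i$ odd forces an odd part among $V_1,\dots,V_{i-1}$ and $b_i$ odd forces one among $V_{i+1},\dots,V_r$, so $D-v$ retains at least two odd parts and $\A_{D-v}(-1)=0$ (in the example $D-v=\ora{K_5}$). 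Both halves of your proof become correct once you route the vanishing through \Cref{lem:multipartiteOdd} rather than through parity of the arc count; as written, the parity bookkeeping you flag as ``the main obstacle'' does not close.
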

\begin{proof}
    As in the bipartite case, we begin by establishing the equality.  Suppose at most one of the parts of $D$ has odd size. By \Cref{lemma remove a vertex} we have
    \[\A_D(-1)=\sum_{v\in V(D)} \frac{(-1)^{\deg^+(v)}+(-1)^{\deg^-(v)}}{2} \A_{D-v}(-1),\]
    so it suffices to show that for each $v\in V(D)$, either $\deg^+(v),\deg^-(v)$ are both even or $\A_{D-v}(-1)=0$.  Suppose $v\in V_i$. Then $\deg^+(v)=|\bigcup_{j>i} V_j|$ and $\deg^-(v)=|\bigcup_{j<i} V_j|$.  If $|V_{i'}|$ is even for all $i'\ne i$, then $\deg^+(v)$ and $\deg^-(v)$ will be even.
    If $|V_{i'}|$ is odd for some $i'\ne i$, then $|V_i|$ must be even by hypothesis, so $|V_i-v|$ is odd.  This means $D-v$ is the orientation of a complete multipartite graph with two parts of odd size, namely $V_i-v$ and $V_{i'}$.  By the previous lemma this implies $\A_{D-v}(-1)=0$, completing the proof of this part.

    The proof that $\A_D(-1)\ge 0$ follows essentially the same inductive proof as in \Cref{lem:bipartiteOrientation}.  We omit the details.
\end{proof}
From these lemmas, the proof of \Cref{cor:nuBipartite} carries over to give the following.
\begin{cor}\label{cor:nuMultipartite}
    If $G$ is a complete multipartite graph with at least two parts of odd size, then $\nu(G)=0$, and otherwise $\nu(G)=\sum_v \nu(G-v)$.
\end{cor}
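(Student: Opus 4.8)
The plan is to transcribe the proof of \Cref{cor:nuBipartite}, replacing \Cref{lem:bipartiteOrientation} by \Cref{lem:multipartiteOrientation} and using \Cref{lem:multipartiteOdd} exactly where a parity argument is needed. First, if $G$ is complete multipartite with at least two parts of odd size, then \Cref{lem:multipartiteOdd} gives $\nu(G)=\eta(G)=0$, which settles the first case.

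For the second case, assume $G$ has parts $V_1,\ldots,V_r$ with at most one of them of odd size, and let $D$ be the ``natural'' orientation of $G$ from \Cref{lem:multipartiteOrientation}, i.e.\ orient every edge from the lower-indexed part to the higher-indexed part. The first conclusion of that lemma gives $\A_D(-1)\ge 0$, so $\nu(G)=|\A_D(-1)|=\A_D(-1)$; and since $G$ has at most one part of odd size, the second conclusion gives $\A_D(-1)=\sum_v \A_{D-v}(-1)$.

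It then remains to identify $\A_{D-v}(-1)$ with $\nu(G-v)$ for each $v$. The key observation is that deleting $v\in V_i$ from $D$ produces precisely the natural orientation (in the sense of \Cref{lem:multipartiteOrientation}) of the complete multipartite graph $G-v$ with parts $V_1,\ldots,V_{i-1},V_i\setminus\{v\},V_{i+1},\ldots,V_r$, discarding the $i$-th part if it becomes empty, since removing a vertex changes neither the adjacency structure nor the relative order of the parts. Hence the first conclusion of \Cref{lem:multipartiteOrientation} applies to $D-v$ as well, giving $\A_{D-v}(-1)\ge 0$ and therefore $\A_{D-v}(-1)=|\A_{D-v}(-1)|=\nu(G-v)$. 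Substituting into the displayed identity yields $\nu(G)=\sum_v \nu(G-v)$, as desired.

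Since every step is a direct copy of the bipartite argument combined with two lemmas that are already in hand, I do not anticipate any genuine obstacle. The only point worth spelling out carefully is the bookkeeping that $D-v$ is again a valid natural orientation of a complete multipartite graph, which is what licenses reusing the non-negativity half of \Cref{lem:multipartiteOrientation}; note that this half requires no hypothesis on the number of odd parts, so it applies to $D-v$ even when vertex deletion creates a second odd part.
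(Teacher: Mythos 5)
Your proof is correct and is exactly the argument the paper intends: the paper simply states that ``the proof of \Cref{cor:nuBipartite} carries over'' using \Cref{lem:multipartiteOdd} and \Cref{lem:multipartiteOrientation}, which is precisely your transcription. Your extra remark that the non-negativity half of \Cref{lem:multipartiteOrientation} needs no hypothesis on the number of odd parts, so it still applies to $D-v$, is the right bookkeeping point and fills in the one detail the paper leaves implicit.
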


Finally, we prove our lemmas for graphs $G$ which are blowups of cycles, which we recall means that one can partition the vertex set of $G$ into sets $V_1,\ldots,V_r$ (which we will call the \textit{parts} of $G$) such that $uv$ is an edge of $G$ if and only if $u\in V_i$ and $v\in V_{i+1}$ for some $i$, where the indices are taken modulo $r$.  Again we begin with a simple criteria for having $\nu(G)=0$.

\begin{lemma}\label{lem:cycleOdd}
    Let $G$ be a blowup of a cycle with parts $V_1,\ldots,V_r$. If $|V_i| |V_{i+1}|$ is odd for an odd number of integers $1\le i\le r$, then $\nu(G)=\eta(G)=0$.
\end{lemma}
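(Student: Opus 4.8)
The plan is to mirror the strategy used in Lemmas~\ref{lem:multipartiteOdd} and \ref{lem:cycleOdd}'s companion results: I will show that $\eta(G)=0$ by arguing that no even sequence of $G$ can exist, and then invoke $\nu(G)\le \eta(G)$ from \Cref{prop:nuGeneral}(a) to conclude $\nu(G)=0$ as well. So the entire content is a combinatorial claim about even sequences of a cycle blowup.

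First I would set up the bookkeeping. Given an ordering $\pi=(\pi_1,\ldots,\pi_n)$ of $V(G)$ and an index $j$, write $a_i^{(j)} := |V_i\cap\{\pi_1,\ldots,\pi_j\}|$ for the number of vertices of part $V_i$ appearing among the first $j$ entries. Since $G$ is a blowup of the cycle $C_r$, the number of edges in $G[\pi_1,\ldots,\pi_j]$ is exactly $\sum_{i=1}^r a_i^{(j)} a_{i+1}^{(j)}$ (indices mod $r$). For $\pi$ to be an even sequence we need this sum to be even for every $j$. Working mod $2$, let $x_i^{(j)} = a_i^{(j)} \bmod 2 \in \{0,1\}$; the parity of $e(G[\pi_1,\ldots,\pi_j])$ is $\sum_i x_i^{(j)} x_{i+1}^{(j)} \pmod 2$. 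As $j$ runs from $0$ to $n$, exactly one coordinate $x_i$ flips at each step (the part containing $\pi_j$), so we are walking along the hypercube $\{0,1\}^r$ from $\mathbf 0$ to the vector $x^{(n)} = (|V_1|,\ldots,|V_r|) \bmod 2$, changing one coordinate at a time, and we must stay inside the set $Z := \{x\in\{0,1\}^r : \sum_i x_i x_{i+1} \equiv 0 \pmod 2\}$ the whole way.

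The key step is to read off the hypothesis. The parity of $\sum_i x_i x_{i+1}$ at the endpoint $x^{(n)}$ is exactly the parity of the number of $i$ with $|V_i|\,|V_{i+1}|$ odd, i.e.\ the number of $i$ with both $|V_i|$ and $|V_{i+1}|$ odd. The hypothesis says this is odd, so $x^{(n)}\notin Z$: the terminal vertex of the required walk lies outside $Z$. Hence no walk from $\mathbf 0$ (which is in $Z$) staying entirely in $Z$ can reach it, and in particular the last step would already force $e(G[\pi_1,\ldots,\pi_{n-1}])$ or $e(G) = e(G[\pi_1,\ldots,\pi_n])$ to be odd — more simply, $e(G)$ itself is odd, so $\eta(G)=0$ immediately by \Cref{lem:etaRecurrence}, and $\nu(G)=0$ by \Cref{prop odd edges to evaluation at -1}. (In fact the statement follows at once from $e(G)$ being odd; the hypercube-walk picture is only needed to see that the hypothesis is precisely the condition $e(G)$ odd.)

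The ``main obstacle,'' such as it is, is simply to verify carefully that $e(G) = \sum_{i=1}^r |V_i|\,|V_{i+1}|$ for a cycle blowup and that this sum is odd exactly when an odd number of the products $|V_i|\,|V_{i+1}|$ are odd, which is immediate from parity arithmetic; once that identity is in hand, the result is an instant consequence of \Cref{prop odd edges to evaluation at -1} (or \Cref{prop:nuGeneral}(a) together with \Cref{lem:etaRecurrence}). I would present the short version: compute $e(G)$, observe the hypothesis makes it odd, and cite \Cref{prop odd edges to evaluation at -1}.
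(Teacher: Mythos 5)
Your proof is correct and, once you strip away the hypercube-walk detour (which you yourself note is unnecessary), it is exactly the paper's argument: $e(G)=\sum_{i=1}^r |V_i|\,|V_{i+1}|$, the hypothesis makes this odd, so $\eta(G)=0$ and hence $\nu(G)=0$ via $\nu(G)\le\eta(G)$. The short version you say you would present is precisely what the paper writes.
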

\begin{proof}
    By the definition of $G$ being a blowup of a cycle, we have $e(G)=\sum_{i=1}^r |V_i||V_{i+1}|$.  Thus if $|V_i|V_{i+1}|$ is odd for an odd number of integers, then $e(G)$ is odd.  This implies $\eta(G)=0$ and hence $\nu(G)=0$ by \cref{prop:nuGeneral}(a).
\end{proof}

Our analog of Lemmas~\ref{lem:bipartiteOrientation} and \ref{lem:multipartiteOrientation} will be slightly more complex in the setting of blowups of cycles.  For this, we define our ``natural'' directed analog of blowups of cycles as follows: we say a digraph $D$ is a \textit{blowup of a directed $r$-cycle} if it has vertex set $V_1\cup \cdots \cup V_r$ and arcs of the form $u\to v$ if and only if $u\in V_i$ and $v\in V_{i+1}$ for some $i$.  For such a digraph, we define $m(D)$ to be the number of integers $1\le i\le r$ such that $|V_i|$ and $|V_{i+1}|$ are both odd, i.e.\ such that $|V_i||V_{i+1}|$ is odd. 

\begin{lemma}\label{lem:cycleOrientation}
    Let $D$ be a blowup of an $r$-cycle.  
    \begin{enumerate}
        \item[(a)] If $m(D)$ is odd, then $\A_D(-1)=0$.
        \item[(b)] If $m(D)$ is even, then 
        \[(-1)^{m(D)/2} A_D(-1)\ge 0\]
        and
        \[(-1)^{m(D)/2}\A_D(-1)=\sum_{v} (-1)^{m(D-v)/2} \A_{D-v}(-1)\]
    \end{enumerate}
\end{lemma}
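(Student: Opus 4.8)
The plan is to prove part (a) directly and part (b) by induction on $|V(D)|$. For part (a), I would observe that since the indices are taken mod $r$ the number of arcs is $e(D)=\sum_{i=1}^r |V_i|\,|V_{i+1}|$, and the $i$-th summand is odd exactly when both $|V_i|$ and $|V_{i+1}|$ are odd; hence $e(D)\equiv m(D)\pmod 2$. So if $m(D)$ is odd then $D$ has an odd number of arcs and $\A_D(-1)=0$ by \Cref{prop odd edges to evaluation at -1}.

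For part (b) I would run an induction on $|V(D)|$ that establishes both displayed assertions at once. To keep the relevant class of digraphs closed under vertex deletion it is convenient to allow empty parts $V_i$ (so that "blowup of a directed $r$-cycle" also covers blowups of directed paths); the definition of $m(D)$ is unchanged, the congruence $e(D)\equiv m(D)\pmod2$ still holds, and part (a) still applies. The base case $|V(D)|\le 1$ is immediate since there $\A_D(t)=1$ and $m(D)=0$. For the inductive step, assume $m(D)$ is even. For $v\in V_i$ one has $\deg_D^+(v)=|V_{i+1}|$ and $\deg_D^-(v)=|V_{i-1}|$, so \Cref{lemma remove a vertex} evaluated at $t=-1$ gives
\[
\A_D(-1)=\sum_{v\in V(D)} c_v\,\A_{D-v}(-1),\qquad c_v:=\frac{(-1)^{|V_{i+1}|}+(-1)^{|V_{i-1}|}}{2}\in\{-1,0,1\},
\]
where $i$ is the index of the part containing $v$; explicitly $c_v=1$ when $|V_{i-1}|,|V_{i+1}|$ are both even, $c_v=-1$ when both are odd, and $c_v=0$ when exactly one is odd.

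The key step I would carry out next is a parity comparison between $c_v$ and $m(D-v)-m(D)$: deleting $v$ alters $m$ only through the two consecutive pairs involving $V_i$, namely $(V_{i-1},V_i)$ and $(V_i,V_{i+1})$. A short check on the parities of $|V_{i-1}|,|V_i|,|V_{i+1}|$ then shows that if $c_v=0$ then $m(D-v)$ is odd, while if $c_v=\pm1$ then $m(D-v)$ is even and $c_v=(-1)^{(m(D-v)-m(D))/2}$ (when $|V_{i-1}|,|V_{i+1}|$ are both even neither pair is ever "both odd", so $m$ is unchanged; when they are both odd, deleting $v$ flips the parity of $|V_i|$ and toggles both pairs simultaneously, changing $m$ by $\pm2$). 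Granting this, part (a) applied to the smaller digraph $D-v$ annihilates every summand with $m(D-v)$ odd (in particular all terms with $c_v=0$), so multiplying the identity above by $(-1)^{m(D)/2}$ yields
\[
(-1)^{m(D)/2}\A_D(-1)=\sum_{v\in V(D)}(-1)^{m(D-v)/2}\A_{D-v}(-1),
\]
which is the claimed recurrence. The sign statement then follows at once: each $D-v$ is a blowup of a directed $r$-cycle on fewer vertices, so by the inductive hypothesis every summand on the right is nonnegative (it is $0$ when $m(D-v)$ is odd and equals $(-1)^{m(D-v)/2}\A_{D-v}(-1)\ge0$ when $m(D-v)$ is even), hence $(-1)^{m(D)/2}\A_D(-1)\ge0$.

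I expect the main obstacle to be the parity bookkeeping that matches the sign $c_v$ produced by \Cref{lemma remove a vertex} with the change $m(D-v)-m(D)$: one has to track, across the cases "$|V_i|$ even" and "$|V_i|$ odd" together with the four parity possibilities for the pair $(|V_{i-1}|,|V_{i+1}|)$, exactly how the two pairs adjacent to $V_i$ switch their "both parts odd" status. One must also be a little careful that the argument is robust under empty parts (so that $D-v$ genuinely stays in the class being inducted over) and under small values of $r$, where $D$ may contain digons; those degenerate cases are either vacuous or reduce to \Cref{prop basic facts} and \Cref{lem:bipartiteOrientation}.
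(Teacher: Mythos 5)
Your proposal is correct and follows essentially the same route as the paper: part (a) via the parity of $e(D)=\sum_i|V_i||V_{i+1}|$, and part (b) via \Cref{lemma remove a vertex} together with the same case analysis matching the sign $\frac{(-1)^{|V_{i+1}|}+(-1)^{|V_{i-1}|}}{2}$ to $(-1)^{(m(D-v)-m(D))/2}$, followed by induction for the nonnegativity. Your explicit remark that one should allow empty parts so that the class is closed under vertex deletion is a small point the paper glosses over, but it does not change the argument.
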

Note that the first statement implies that when $m(D-v)$ is odd, $\A_{D-v}(-1)=0$. Hence, the sum in the second statement is a well-defined real number. 
\begin{proof}
    For (a), note that if $m(D)$ is odd, then $D$ is an orientation of a graph $G$ as in \Cref{lem:cycleOdd}, so $A_D(-1)=0$ as desired.  It thus remains to prove (b), and we begin by establishing the sum. 
    
    By \Cref{lemma remove a vertex}, we have
    \[\A_D(-1)=\sum_{v\in V(D)} \frac{(-1)^{\deg^+(v)}+(-1)^{\deg^-(v)}}{2} \A_{D-v}(-1),\]
    so to prove the desired sum, it suffices to show that for each $v\in V(D)$, either $\A_{D-v}(-1)=0$, or $\deg^+(v),\deg^-(v)$ both have the same parity as $(m(D-v)-m(D))/2$.  Note that by (a) we have $\A_{D-v}(-1)=0$ if $m(D-v)$ is odd, so from now on we may assume $m(D-v)$ is even (and hence it makes sense to talk about the parity of $(m(D-v)-m(D))/2$ since $m(D)$ is assumed to be even).
    
    Suppose $v\in V_i$, which means $\deg^+(v)=|V_{i+1}|$ and $\deg^-(v)=|V_{i-1}|$.  If $|V_{i-1}|\not\equiv_2 |V_{i+1}|$, then $m(D-v)=m(D)+1$ if $|V_i|$ is even and $m(D-v)=m(D)-1$ if $|V_i|$ is odd.  Since $m(D)$ is even, $m(D-v)$ is odd in either case, which we assumed not to be the case.  Thus we must have $|V_{i-1}|\equiv_2 |V_{i+1}|$.  
    
    If both $|V_{i-1}|$ and $|V_{i+1}|$ are even, then $m(D-v)=m(D)$, and hence $\deg^+(v)=|V_{i+1}|,\deg^-(v)=|V_{i-1}|$ have the same parity as $(m(D-v)-m(D))/2$.  If instead both these quantities are odd, then $m(D-v)=m(D)+2$ if $|V_i|$ is even and $m(D-v)=m(D)-2$ if $|V_i|$ is odd. Hence, we have $m(D-v)-m(D)=\pm 2$ and so \[(m(D-v)-m(D))/2=\pm 1\equiv_2 |V_{i\pm 1}|=\deg^{\pm}(v),\] so again in this case the desired result follows.  This completes the proof of the equality.

    The proof that $(-1)^{m(D)/2}\A_D(-1)\ge 0$ again follows from essentially the same inductive proof as in \Cref{lem:bipartiteOrientation}.  More precisely, by the equality we just proved we find
    \[(-1)^{m(D)/2} \A_D(-1)=\sum_v (-1)^{(m(D-v)} \A_{D-v}(-1)\ge0,\]
    with this last inequality using the inductive hypothesis.
\end{proof}

Again these lemmas give the following corollary.
\begin{cor}
    If $G$ is a blowup of a cycle such that $|V_i| |V_{i+1}|$ is odd for an even number of integers $i$, then $\nu(G)=0$, and otherwise $\nu(G)=\sum_v \nu(G-v)$.
\end{cor}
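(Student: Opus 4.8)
The plan is to obtain this corollary from \Cref{lem:cycleOdd} and \Cref{lem:cycleOrientation} by the same two-line argument that produced \Cref{cor:nuBipartite} from \Cref{lem:bipartiteOrientation}. Fix a blowup $G$ of a cycle with parts $V_1,\dots,V_r$, let $D$ be the orientation of $G$ making it a blowup of a directed $r$-cycle, and recall that $m(D)$ counts the indices $i$ for which $|V_i||V_{i+1}|$ is odd. Since $e(G)=\sum_{i=1}^r|V_i||V_{i+1}|$, the number of edges of $G$ is odd exactly when $m(D)$ is odd, and because $\nu(G)=|\A_D(-1)|$ for this orientation, the whole argument amounts to reading $|\A_D(-1)|$ off of the two lemmas according to this parity.

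I would first dispose of the vanishing alternative: when $m(D)$ is odd (equivalently $e(G)$ is odd), \Cref{lem:cycleOdd} gives $\nu(G)=0$ directly, consistent with $\A_D(-1)=0$ from \Cref{lem:cycleOrientation}(a). When $m(D)$ is even I would instead invoke \Cref{lem:cycleOrientation}(b): its sign statement $(-1)^{m(D)/2}\A_D(-1)\ge 0$ lets me write $\nu(G)=|\A_D(-1)|=(-1)^{m(D)/2}\A_D(-1)$, after which the accompanying identity
\[
(-1)^{m(D)/2}\A_D(-1)=\sum_{v}(-1)^{m(D-v)/2}\A_{D-v}(-1)
\]
does the work. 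The goal is then to recognize each summand as $\nu(G-v)$: if $m(D-v)$ is even this is the same sign bound applied to $D-v$, while if $m(D-v)$ is odd then $\A_{D-v}(-1)=0=\nu(G-v)$ by part (a), so the identity $(-1)^{m(D-v)/2}\A_{D-v}(-1)=\nu(G-v)$ holds in both subcases. Summing yields $\nu(G)=\sum_v\nu(G-v)$, the recurrence alternative.

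The step I expect to demand the most care is the bookkeeping of the sign $(-1)^{m(D)/2}$, together with the verification that each deleted digraph $D-v$ is again a blowup of a directed $r$-cycle so that both lemmas genuinely apply to it. The only subtlety arises when $v$ is the last remaining vertex of its part $V_i$: then $V_i$ becomes empty, but the defining incidence condition of a blowup of a directed cycle still holds vacuously at that part, and the resulting change in $m$ is precisely the one already tracked inside the proof of \Cref{lem:cycleOrientation}(b). Once this closure under vertex deletion is noted, the term-by-term identification above is justified and the corollary follows.
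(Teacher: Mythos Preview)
Your argument is correct and follows exactly the paper's route: invoke \Cref{lem:cycleOdd} for the vanishing case, and in the even-$m(D)$ case use the sign statement of \Cref{lem:cycleOrientation}(b) to identify $\nu(G)=(-1)^{m(D)/2}\A_D(-1)$, then apply the recurrence and the same sign identification termwise to get $\sum_v\nu(G-v)$. Your extra care about the parity of $m(D-v)$ and closure under vertex deletion is a welcome elaboration of steps the paper leaves implicit, but the underlying proof is the same.
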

\begin{proof}
    \Cref{lem:cycleOdd} implies the first half of the result.  Otherwise, if $D$ is the directed blowup of an $r$-cycle whose underlying graph is $G$, then $m(D)$ is even and \Cref{lem:cycleOrientation} inductively gives
    \[\nu(G)=(-1)^{m(D)/2} \A_D(-1)=\sum_v (-1)^{m(D-v)/2}\A_{D-v}(-1)=\sum_v \nu(G-v),\]
    completing the proof.
\end{proof}

We are now ready to prove our main result for this subsection.
\begin{proof}[Proof of \Cref{thm:nuEta}]
    We aim to show that $\nu(G)=\eta(G)$ whenever $G$ is bipartite, complete multipartite, or a blowup of a cycle.  We first consider the case that $G$ is bipartite.  We prove this result by induction on $|V(G)|$, the base case $\nu(K_1)=\eta(K_1)=1$ being trivial.    By \Cref{cor:nuBipartite} and \Cref{lem:etaRecurrence}, if $G$ has an odd number of edges then $\nu(G)=\eta(G)=0$, and otherwise
    \[\nu(G)=\sum_v \nu(G-v)=\sum_v \eta(G-v)=\eta(G),\]
    where the middle equality used the inductive hypothesis (and that $G-v$ is bipartite whenever $G$ is).  
    
    Nearly identical arguments work for the cases when $G$ is either complete multipartite or a blowup of a cycle, completing the proof.
\end{proof}

It is tempting to try to generalize the approach of this subsection by finding ``natural'' orientations of other graphs in order to show $\nu(G)=\sum \nu(G-v)$; see for example \Cref{conj:Eulerian}.  However, we emphasize that \Cref{thm:induced} shows that the inductive proof of \Cref{thm:nuEta} can not be extended beyond the class of graphs which are bipartite, complete multipartite, or blowups of cycles.

Before moving on, we note the following cute consequence of our results for $\nu(G)$ which gives a combinatorial interpretation for $t$-multinomial coefficients evaluated at $-1$.
\begin{cor}\label{cor:multinomial}
    If $n_1,\ldots,n_r$ are positive integers and $n=n_1+\cdots+n_r$, then
    \[
    \left|\begin{bmatrix}
   n\\ n_1,\ldots,n_r
\end{bmatrix}_{-1}\right|=\begin{cases}
0&\text{at least two parts of odd size}\\
    \binom{\lfloor n/2\rfloor}{\lfloor n_1/2\rfloor,\dots,\lfloor n_r/2\rfloor}&\text{otherwise.}
\end{cases}
\]

\end{cor}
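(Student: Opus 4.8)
The plan is to realize $\left|\begin{bmatrix} n\\ n_1,\ldots,n_r\end{bmatrix}_{-1}\right|$ as $\nu(G)$ for a suitable complete multipartite graph $G$ and then apply the results already established. Specifically, let $G = K_{n_1,\ldots,n_r}$ be the complete multipartite graph with parts $V_1,\ldots,V_r$ of sizes $n_1,\ldots,n_r$, and take $D$ to be the ``natural'' orientation of \Cref{lem:multipartiteOrientation} (arcs $u\to v$ whenever $u\in V_i$, $v\in V_j$, $i<j$). Observe that $G$ is the join of the empty graphs $\overline{K_{n_1}},\ldots,\overline{K_{n_r}}$, each of which satisfies $\nu(\overline{K_{n_i}}) = n_i!$ (there are no edges, so every ordering is an even sequence, and $\nu=\eta$ by \Cref{thm:nuEta} applied to the edgeless — hence bipartite — graph). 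Therefore \Cref{prop:nuGeneral}(d) gives
\[
\nu(G) = \left|\begin{bmatrix} n\\ n_1,\ldots,n_r\end{bmatrix}_{-1}\right|\cdot \prod_{i=1}^r n_i!.
\]
So it remains to compute $\nu(K_{n_1,\ldots,n_r})$ directly in two ways: once as the left side above, and once via the even-sequence interpretation $\nu(G)=\eta(G)$ from \Cref{thm:nuEta}.

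The second computation is the combinatorial heart. First, if at least two of the $n_i$ are odd, then \Cref{lem:multipartiteOdd} gives $\nu(G)=\eta(G)=0$, which matches the first case of the claimed formula (and then the displayed identity forces $\left|\begin{bmatrix} n\\ n_1,\ldots,n_r\end{bmatrix}_{-1}\right|=0$ as well). So assume at most one $n_i$ is odd. I would count even sequences $\pi=(\pi_1,\ldots,\pi_n)$ of $G$ by the following bookkeeping: for a prefix $\{\pi_1,\ldots,\pi_j\}$, record the vector of ``parities'' $(|V_1\cap\{\pi_1,\ldots,\pi_j\}| \bmod 2,\ldots,|V_r\cap\{\pi_1,\ldots,\pi_j\}|\bmod 2)$. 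The number of edges in $G[\pi_1,\ldots,\pi_j]$ is $\sum_{i<i'}|V_i\cap\{\ldots\}|\,|V_{i'}\cap\{\ldots\}|$, whose parity equals $\binom{p}{2}\bmod 2$ where $p$ is the number of odd entries in the parity vector; this is even iff $p\equiv 0$ or $1\pmod 4$. Combined with the argument of \Cref{lem:multipartiteOdd} (the first $j$ at which two coordinates are simultaneously odd already fails), the key point is that an even sequence is exactly an ordering in which, at every prefix, at most one part has odd intersection. I would then argue that such orderings are in bijection with: a choice, for each part $V_i$, of which $\lfloor n_i/2\rfloor$ ``pairs'' plus possible singleton appear, interleaved so that vertices of each $V_i$ come in consecutive-in-$\pi$ pairs except for at most one leftover — more cleanly, I would set up the count recursively using \Cref{lem:etaRecurrence}: $\eta(G)=\sum_v \eta(G-v)$, and removing a vertex from $V_i$ turns the problem into $\eta(K_{n_1,\ldots,n_i-1,\ldots,n_r})$. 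Unwinding this recursion (and noting that whenever we reach a configuration with two odd parts the contribution vanishes) produces a multinomial-times-factorials closed form.

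Rather than fight the recursion blindly, the cleanest route is probably a direct bijective/counting argument: an even sequence of $K_{n_1,\ldots,n_r}$ (with at most one odd part) is determined by (i) an ordering of $n$ ``slots'' which, within each color class, groups all but at most one element into adjacent pairs — equivalently, choose an unordered perfect-or-near-perfect pairing of each $V_i$ and then order the resulting $\lfloor n/2\rfloor$ blocks (pairs, plus at most one singleton coming from the unique odd part) in $\lfloor n/2\rfloor!$ ways, then order within pairs. Counting: the pairings of $V_i$ contribute $n_i!/(2^{\lfloor n_i/2\rfloor}\lfloor n_i/2\rfloor!)$, ordering the $\lfloor n/2 \rfloor$ blocks contributes $\lfloor n/2\rfloor!$, and the $2^{\lfloor n_i/2\rfloor}$ internal orderings of pairs cancel the $2$'s. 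Multiplying out yields $\eta(G) = \lfloor n/2\rfloor!\cdot \prod_i \frac{n_i!}{\lfloor n_i/2\rfloor!} = \binom{\lfloor n/2\rfloor}{\lfloor n_1/2\rfloor,\ldots,\lfloor n_r/2\rfloor}\prod_i n_i!$. Dividing by $\prod_i n_i!$ via the displayed identity gives the formula. \textbf{The main obstacle} is justifying rigorously that ``even sequence of $K_{n_1,\ldots,n_r}$'' $\Leftrightarrow$ ``at every prefix at most one color class is odd'' $\Leftrightarrow$ ``the coloring pattern of $\pi$ is a shuffle of near-perfect pairings plus one leftover block'' — the parity analysis of $\binom{p}{2}$ is easy, but one must check carefully that allowing a prefix to momentarily have $p=2,3$ odd coordinates is genuinely forbidden (it is, by \Cref{lem:multipartiteOdd}'s minimality argument) and that no other prefix obstruction arises, i.e.\ that the pairing-shuffle description is exactly right and not merely sufficient.
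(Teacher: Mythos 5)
Your proposal is correct and follows essentially the same route as the paper's (sketched) proof: realize the quantity as $\nu(K_{n_1,\ldots,n_r})$ via \Cref{prop:nuGeneral}(d), invoke $\nu=\eta$ from \Cref{thm:nuEta}, and count even sequences as shuffles of adjacent same-color pairs, giving $\lfloor n/2\rfloor!\prod_i n_i!/\lfloor n_i/2\rfloor!$. The obstacle you flag resolves itself: the parity vector changes in exactly one coordinate per step, so any prefix with $p\ge 2$ odd coordinates is preceded by one with exactly $p=2$, whose edge count has parity $\binom{2}{2}=1$; hence the even-sequence condition is genuinely equivalent to $p\le 1$ at every prefix, which (since $p$ must equal $j\bmod 2$ for a prefix of length $j$) is exactly the adjacent-pairing description.
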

It is likely that \Cref{cor:multinomial} is already well known in the literature, though the only concrete source we are aware of is \cite[Section 5.2]{ajose_2007} which solves the case $r=2$ (from which the general result can be derived).
\begin{proof}[Sketch of Proof]
    Let $G$ be the complete multipartite graph with parts of sizes $n_1,\ldots,n_r$.  Since $G$ is the join of independent sets of size $n_i$, \Cref{prop:nuGeneral}(d) implies $\nu(G)=\left|\begin{bmatrix}
    n_1+\cdots +n_r\\ n_1,\ldots,n_r
\end{bmatrix}_{-1}\right| \prod n_i!$.  On the other hand, by using ideas similar to those in \Cref{lem:multipartiteOdd}, one can work out that the number of even sequences $\eta(G)$ equals  $\prod n_i!$ times the number of words $w$ consisting of $n_1$ $1$'s, $n_2$ $2$'s, and so on, with the additional property that each prefix $w_1\cdots w_i$ has all but at most one letter appearing an even number of times.  This is equivalent to saying that $w_i=w_{i+1}$ for all odd $i<\sum n_j$, so the number of these words is 0 if $n_i$ is odd for at least two values of $i$, and otherwise equals $ \binom{\lfloor n/2\rfloor}{\lfloor n_1/2\rfloor,\dots,\lfloor n_r/2\rfloor}$.  By \Cref{thm:nuEta} we have $\nu(G)=\eta(G)$, giving the desired result.
\end{proof}

\subsection{Proof of \Cref{thm:induced}}\label{subsec:induced}
In this subsection we characterize which graphs have $\nu(G')=\eta(G')$ for all induced subgraphs $G'\sub G$.  For this the following will be crucial.
\begin{definition}\label{def:pans}
    We define the \textit{odd pan graph} $C_{2k+1}^*$ to be the graph obtained by taking the odd cycle $C_{2k+1}$ and then adding a new vertex $u$ adjacent to exactly one vertex of $C_{2k+1}$; see \Cref{fig: c7star}.  We say that a graph $G$ is \textit{odd pan-free} if it contains no induced subgraph which is isomorphic to $C_{2k+1}^*$ for any $k\ge 1$.
\end{definition}
We note that some authors use the term ``odd pan'' only to refer to $C_{2k+1}^*$ when $k\ge 2$, but we emphasize that we include the paw graph $C_3^*$ in our definition of odd pans. Our motivation for this definition is the following lemma.

\begin{lemma}\label{lem:pans}
    We have $\nu(C_{2k+1}^*)\ne \eta(C_{2k+1}^*)$ for all $k\ge 1$.
\end{lemma}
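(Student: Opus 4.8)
The plan is to directly compute both $\nu(C_{2k+1}^*)$ and $\eta(C_{2k+1}^*)$ and show they disagree. Since $C_{2k+1}^*$ has $2k+2$ edges (the $2k+1$ edges of the cycle plus the pendant edge), the number of edges is even, so neither quantity is forced to vanish for parity reasons. For $\nu$, I would pick a convenient orientation $D$ of $C_{2k+1}^*$ and compute $A_D(-1)$ using \Cref{lemma remove a vertex} (or \Cref{lemma split into subgraphs}). A natural choice is to orient the cycle consistently (say $v_1\to v_2\to\cdots\to v_{2k+1}\to v_1$) and orient the pendant edge toward the leaf $u$; then removing the leaf $u$ gives the directed odd cycle, removing the cycle-vertex $w$ attached to $u$ gives a directed path on $2k$ vertices together with an isolated vertex, and removing any other cycle-vertex gives a directed path on $2k$ vertices with a pendant edge at an endpoint. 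Each of these pieces has a known or easily-computed Eulerian polynomial, so $A_D(-1)$ becomes an explicit (small) sum.

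For $\eta$, I would count even sequences of $C_{2k+1}^*$ combinatorially. The key structural observation is that if $\pi$ is an even sequence, then looking at when the leaf $u$ is inserted relative to its unique neighbor $w$ controls parity: inserting $u$ before $w$ changes the edge count by $0$, inserting it after $w$ changes it by $1$. Combined with the analysis of even sequences of the odd cycle $C_{2k+1}$ itself (which I expect to relate to even sequences of a path $P_{2k+1}$ — hence to alternating permutations via \Cref{prop:path} — up to boundary corrections), this should let me express $\eta(C_{2k+1}^*)$ in closed form, or at least pin down $\eta(C_{2k+1}^*) \bmod$ something, as a function of $k$.

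The cleanest route may actually be to avoid a full closed form on both sides and instead exploit that $\nu \le \eta$ always (\Cref{prop:nuGeneral}(a)), together with the recurrences $\eta(G)=\sum_v\eta(G-v)$ (\Cref{lem:etaRecurrence}) and $\nu(G)\le\sum_v\nu(G-v)$ (\Cref{cor:nuInequality}): if I can show that for $G=C_{2k+1}^*$ the inequality $\nu(G)\le\sum_v\nu(G-v)$ is \emph{strict}, while each $G-v$ is bipartite, complete multipartite, or a blowup of a cycle (so that $\nu(G-v)=\eta(G-v)$ by \Cref{thm:nuEta}), then $\nu(G) < \sum_v\nu(G-v) = \sum_v\eta(G-v) = \eta(G)$ and we are done. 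Indeed each proper induced-subgraph-after-deleting-a-vertex of an odd pan is a path, a path-plus-pendant, or a disjoint union of a path and a point — all bipartite — so \Cref{thm:nuEta} applies to every $G-v$. So the real content is showing the triangle inequality in \Cref{cor:nuInequality} is strict for odd pans: this amounts to finding, for a suitable orientation $D$, two vertices $v,v'$ such that the degree-parity weights $\frac{t^{\deg^+(v)}+t^{\deg^-(v)}}{2}$ evaluated at $-1$ produce terms $A_{D-v}(-1)$ and $A_{D-v'}(-1)$ of genuinely opposite sign (or one of them gets killed while a cancellation still occurs), so that the absolute value of the sum is strictly less than the sum of absolute values.

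\textbf{Main obstacle.} The hard part will be controlling the \emph{signs} of the $A_{D-v}(-1)$ for the chosen orientation $D$ of the odd pan — that is, showing an actual cancellation occurs rather than all surviving terms lining up in sign. The odd cycle is the obstruction to the sign-control arguments of \Cref{subsec:interpretation} (it is neither bipartite nor complete multipartite nor a blowup in a way those lemmas handle for odd length with a pendant), so I expect the pendant vertex to be precisely what breaks the sign coherence; making this precise — likely by an explicit induction on $k$ tracking $A_{D-v}(-1)$ for the relevant orientations of $C_{2j+1}^*$ and of directed-path-plus-pendant graphs, with the base case $C_3^*$ (the paw) checked by hand — is where the real work lies. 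An alternative, possibly cleaner, obstacle-avoiding tactic is to compute $\nu(C_{2k+1}^*)$ exactly via \Cref{prop: rooted product graphs}-style or \Cref{lemma split into subgraphs}-style identities and compare to an exact formula for $\eta$; I would try the strict-inequality approach first and fall back to explicit computation if the sign bookkeeping gets unwieldy.
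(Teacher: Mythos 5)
Your reduction is sound as far as it goes: every $G-v$ for $G=C_{2k+1}^*$ is indeed covered by \Cref{thm:nuEta} (note, though, that $G-u$ is the odd cycle $C_{2k+1}$, which is not bipartite as you claim; it is a blowup of a cycle, and in any case has an odd number of edges, so $\nu=\eta=0$ there), and the chain $\nu(G)<\sum_v\nu(G-v)=\sum_v\eta(G-v)=\eta(G)$ would finish the proof. But the entire content of the lemma sits in the step you defer --- exhibiting an actual cancellation --- and the proposal does not supply it. You would need, for some orientation $D$, two degree-$2$ cycle vertices $v,v'$ (the odd-degree vertices $u$ and $w$ contribute nothing: their weights $\frac{(-1)^{\deg^+}+(-1)^{\deg^-}}{2}$ vanish, and $D-u$, $D-w$ have an odd number of arcs anyway) for which $(-1)^{\deg^+(v)}A_{D-v}(-1)$ and $(-1)^{\deg^+(v')}A_{D-v'}(-1)$ are nonzero and of opposite sign. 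Each such $D-v$ is an oriented caterpillar tree, and pinning down the sign of $A_{D-v}(-1)$ against the ``natural'' orientation of \Cref{lem:bipartiteOrientation} (each arc reversal negates the value at $-1$) is precisely the bookkeeping you flag as unresolved. Without it, the proposal establishes only $\nu(G)\le\eta(G)$, which is already \Cref{prop:nuGeneral}(a).

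The paper avoids this bookkeeping by proving the stronger fact $\nu(C_{2k+1}^*)=0$ outright: it orients the pan symmetrically about the axis through the attachment vertex $v_{k+1}$ (arcs pointing away from $v_{k+1}$ around both sides of the cycle, the pendant arc $u\to v_{k+1}$, and one arc $v_1\to v_{2k+1}$ crossing the axis), and defines the fixed-point-free involution that reflects each labeling across that axis. The reflection preserves the descent status of every arc except the single arc $v_1\to v_{2k+1}$, whose status it flips, so $\des_D(\iota(\sigma))=\des_D(\sigma)\pm1$ and $A_D(-1)=0$; one explicit even sequence then gives $\eta(C_{2k+1}^*)>0$. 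If you pursue your route, I would drop the strict-triangle-inequality framing and look instead for such a symmetry-based sign-reversing involution --- the mirror symmetry of the pan is what makes the cancellation total rather than merely partial, and it is the idea missing from your plan.
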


\begin{figure}[h]
    \centering
    \begin{subfigure}[b]{.4\textwidth}
    \begin{tikzpicture}
    \tikzstyle{vertex} = [circle,draw=black, minimum size=2pt];

        \coordinate (4) at (-90:2);
        \coordinate (4sub) at (-90:4);
        \coordinate (5) at ({-90+1*51.42}:2);
        \coordinate (6) at ({-90+2*51.42}:2);
        \coordinate (7) at ({-90+3*51.42}:2);
        \coordinate (1) at ({-90+4*51.42}:2);
        \coordinate (2) at ({-90+5*51.42}:2);
        \coordinate (3) at ({-90+6*51.42}:2);

        \node[vertex] (v4) at (4) {$v_4$};
        \node[vertex] (u) at (4sub) {$u$};
        \node[vertex] (v5) at (5) {$v_5$};
        \node[vertex] (v6) at (6) {$v_6$};
        \node[vertex] (v7) at (7) {$v_7$};
        \node[vertex] (v1) at (1) {$v_1$};
        \node[vertex] (v2) at (2) {$v_2$};
        \node[vertex] (v3) at (3) {$v_3$};

         \draw (u) -- (v4)--(v5)--(v6)--(v7)--(v1)--(v2)--(v3)--(v4);
         \node (sp) at (0,0) {~};
         \node (sp) at (3,0) {~};
         \node (sp) at (-3,0) {~};
        \end{tikzpicture}
        \caption{$C_7^*$}
        \label{fig: c7star}
    \end{subfigure}
    \begin{subfigure}[b]{.4\textwidth}
    
        \begin{tikzpicture}
\tikzstyle{vertex} = [circle,draw=black, minimum size=1pt];
        \coordinate (4) at (-90:2);
        \coordinate (4sub) at (-90:4);
        \coordinate (5) at ({-90+1*51.42}:2);
        \coordinate (6) at ({-90+2*51.42}:2);
        \coordinate (7) at ({-90+3*51.42}:2);
        \coordinate (1) at ({-90+4*51.42}:2);
        \coordinate (2) at ({-90+5*51.42}:2);
        \coordinate (3) at ({-90+6*51.42}:2);

        \node[vertex] (v4) at (4) {$5$};
        \node[vertex] (u) at (4sub) {$1$};
        \node[vertex] (v5) at (5) {$2$};
        \node[vertex] (v6) at (6) {$6$};
        \node[vertex] (v7) at (7) {$3$};
        \node[vertex] (v1) at (1) {$7$};
        \node[vertex] (v2) at (2) {$4$};
        \node[vertex] (v3) at (3) {$8$};

         \draw (u) -- (v4)--(v5)--(v6)--(v7)--(v1)--(v2)--(v3)--(v4);
         \node (sp) at (0,0) {~};
         \node (sp) at (3,0) {~};
         \node (sp) at (-3,0) {~};
        \end{tikzpicture}
        \caption{An even sequence}
        \label{fig: c7star even sequence}
    \end{subfigure}
    \caption{}
    \label{fig:c7star and c7star even sequence}
\end{figure}
\begin{proof}
    We prove this by showing $\nu(C_{2k+1}^*)=0$ and $ \eta(C_{2k+1}^*)>0$.  Let $v_1,\ldots,v_{2k+1}$ denote the vertices of the odd cycle of $C_{2k+1}^*$ and $u$ the pendant vertex, say with $u$ adjacent to $v_{k+1}$.  

    Define the sequence $(x_1,\ldots,x_{2k+2})$ by having $x_1=u$ and $x_i=v_{k+2i-2}$ for all $i\ge 1$, with these indices for $v$ written modulo $2k+1$; see \Cref{fig: c7star even sequence}.  The first $k+1$ elements \[\{x_1,\ldots,x_{k+1}\}=\{u,v_{k+2},v_{k+4},\ldots,v_{k-1}\}\] form an independent set, and hence $C^*_{2k+1}[x_1,\ldots,x_i]$ has no edges for all $1\le i\le k+1$. 
    For $i>k+1$, we have that 
    \[k+2i-2=k+2(i-k-1)+2(k+1)-2\equiv_{2k+1}k+2(i-k-1)-1.\]
    Therefore, $x_i=v_{k+2i-2}=v_{k+2(i-k-1)-1}$ is adjacent to $x_{i-k-1}=v_{k+2(i-k-1)-2}$ and $x_{i-k}=v_{k+2(i-k)-2}$. Thus $C_{2k+1}^*[x_1,\ldots,x_i]$ is even for all $i$, so $(x_1,\ldots,x_{2k+2})$ is an even sequence and $\eta(C_{2k+1}^*)>0$.

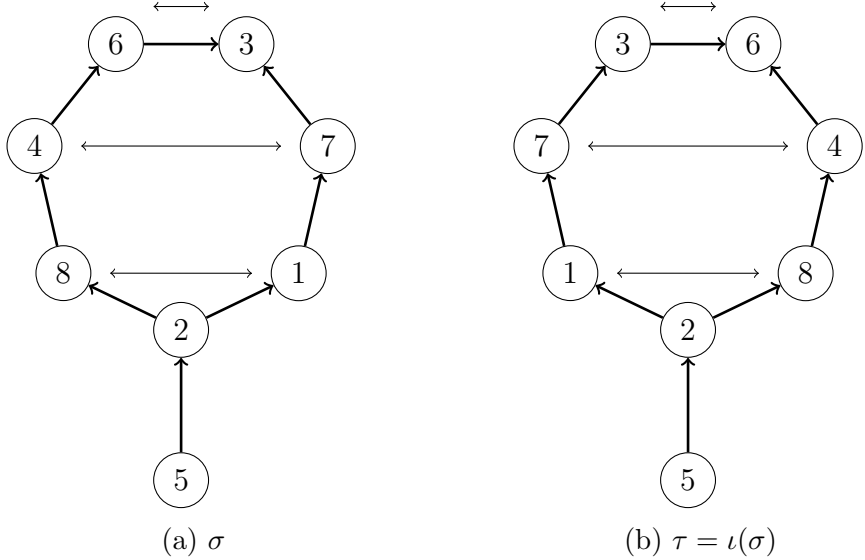
\begin{figure}[h]
    \centering
    \begin{subfigure}[b]{.4\textwidth}
        \begin{tikzpicture}
        \tikzstyle{vertex} = [circle,draw=black, minimum size=2pt];
        \tikzstyle{arrow} = [line width=1pt,->];

        \coordinate (4) at (-90:2);
        \coordinate (4sub) at (-90:4);
        \coordinate (5) at ({-90+1*51.42}:2);
        \coordinate (6) at ({-90+2*51.42}:2);
        \coordinate (7) at ({-90+3*51.42}:2);
        \coordinate (1) at ({-90+4*51.42}:2);
        \coordinate (2) at ({-90+5*51.42}:2);
        \coordinate (3) at ({-90+6*51.42}:2);

        \node[vertex] (v4) at (4) {$2$};
        \node[vertex] (u) at (4sub) {$5$};
        \node[vertex] (v5) at (5) {$1$};
        \node[vertex] (v6) at (6) {$7$};
        \node[vertex] (v7) at (7) {$3$};
        \node[vertex] (v1) at (1) {$6$};
        \node[vertex] (v2) at (2) {$4$};
        \node[vertex] (v3) at (3) {$8$};

         \draw[arrow] (u) -- (v4);
         \draw[arrow](v4)--(v5);
         \draw[arrow](v5)--(v6);
         \draw[arrow](v6)--(v7);
         \draw[arrow](v4)--(v3);
         \draw[arrow](v3)--(v2);
         \draw[arrow](v2)--(v1);
         \draw[arrow](v1)--(v7);
         \node (sp) at (0,0) {~};
         \node (sp) at (3,0) {~};
         \node (sp) at (-3,0) {~};
         \draw[<->,shorten >=.25cm ,shorten <=.25cm](v3)--(v5);
         \draw[<->,shorten >=.25cm ,shorten <=.25cm](v2)--(v6);
         \draw[<->,shorten >=.5cm ,shorten <=.5cm]($(v1)+(0,.5)$)--($(v7)+(0,.5)$);
        \end{tikzpicture}
        \caption{$\sigma$}
        \label{fig: c7star iota1}
    \end{subfigure}
    \begin{subfigure}[b]{.4\textwidth} 
        \begin{tikzpicture}
\tikzstyle{vertex} = [circle,draw=black, minimum size=2pt];
        \coordinate (4) at (-90:2);
        \coordinate (4sub) at (-90:4);
        \coordinate (5) at ({-90+1*51.42}:2);
        \coordinate (6) at ({-90+2*51.42}:2);
        \coordinate (7) at ({-90+3*51.42}:2);
        \coordinate (1) at ({-90+4*51.42}:2);
        \coordinate (2) at ({-90+5*51.42}:2);
        \coordinate (3) at ({-90+6*51.42}:2);

        \node[vertex] (v4) at (4) {$2$};
        \node[vertex] (u) at (4sub) {$5$};
        \node[vertex] (v5) at (5) {$8$};
        \node[vertex] (v6) at (6) {$4$};
        \node[vertex] (v7) at (7) {$6$};
        \node[vertex] (v1) at (1) {$3$};
        \node[vertex] (v2) at (2) {$7$};
        \node[vertex] (v3) at (3) {$1$};

         \draw[arrow] (u) -- (v4);
         \draw[arrow](v4)--(v5);
         \draw[arrow](v5)--(v6);
         \draw[arrow](v6)--(v7);
         \draw[arrow](v4)--(v3);
         \draw[arrow](v3)--(v2);
         \draw[arrow](v2)--(v1);
         \draw[arrow](v1)--(v7);
         \draw[<->,shorten >=.25cm ,shorten <=.25cm](v3)--(v5);
         \draw[<->,shorten >=.25cm ,shorten <=.25cm](v2)--(v6);
         \draw[<->,shorten >=.5cm ,shorten <=.5cm]($(v1)+(0,.5)$)--($(v7)+(0,.5)$);
         \node (sp) at (0,0) {~};
         \node (sp) at (3,0) {~};
         \node (sp) at (-3,0) {~};
    \end{tikzpicture}
        \caption{$\tau=\iota(\sigma)$}
        \label{fig: c7star iota2}
    \end{subfigure}
    \caption{The involution $\iota$}
    \label{fig:c7star involution}
\end{figure}
    
    To show $\nu(C_{2k+1}^*)=0$, let $D$ be an orientation of $C_{2k+1}^*$ such that $u\to v_{k+1},$ $v_1\to v_{2k+1}$, and for $0\le i\le k-1$, $v_{k+1+i}\to v_{k+1+(i+1)}$ and $v_{k+1-i}\to v_{k+1-(i+1)}$; see \Cref{fig: c7star iota1}. Define a map $\iota:\mathfrak{S}_V\to \mathfrak{S}_V$ sending $\sigma$ to $\tau:=\iota(\sigma)$ defined by setting $\tau(u)=\sigma(u)$ and 
    \[\tau(v_{k+1+i})=\begin{cases}
        \sigma(v_{k+1-i})&1\leq |i|\leq k\\
        \sigma(v_{k+1})&\text{otherwise}
    \end{cases}\]
    Then, $\iota$ is clearly an involution with no fixed points. By the orientation $D$ of the graph, we have that $(v_{k+1\pm i},v_{k+1\pm (i+1)})$ is a descent of $\tau$ if and only $(v_{k+1\mp i},v_{k+1\mp(i+1)})$ is a descent of $\sigma$ and that $(u,v_{k+1})$ is a descent of $\tau$ if and only if $(u,v_{k+1})$ is a descent of $\sigma$. Finally, we have that $(v_1,v_{2k+1})$ is a descent of $\tau$ if and only if  $(v_1,v_{2k+1})$ is not a descent of $\sigma$. Thus, $\iota$ changes the number of descents by exactly 1 and hence it is a sign-reversing involution, proving $A_D(-1)=0$.
    \end{proof}

Recall that \Cref{thm:induced} says that if $G$ is a connected graph, then $\nu(G')=\eta(G')$ for all induced subgraphs $G'\sub G$ if and only if $G$ is bipartite, complete multipartite, or a blowup of a cycle.  In view of the lemma above, it will suffice to prove the following structural graph theory lemma.
\begin{prop}\label{prop:structure}
    If $G$ is a connected graph, then $G$ is odd pan-free if and only if it is either bipartite, complete multipartite, or a blowup of a cycle.
\end{prop}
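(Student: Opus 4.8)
The plan is to prove both directions. The easy direction is that bipartite graphs, complete multipartite graphs, and blowups of cycles are all odd pan-free: in each case one checks directly that an induced $C_{2k+1}^*$ cannot occur. For bipartite graphs there is no odd cycle at all. For a complete multipartite graph $G$, any induced odd cycle $C_{2k+1}$ with $k\ge 1$ uses vertices from at least $3$ parts (indeed a triangle uses $3$ parts, and a longer induced cycle is impossible since non-adjacent vertices in different blocks would already be adjacent); then any other vertex $u$ is adjacent to \emph{every} vertex of $C_{2k+1}$ lying outside $u$'s own part, so $u$ has at least two neighbors on the cycle, ruling out the pendant configuration. (When $k=1$ one has to observe $C_3^* = $ paw is not complete multipartite, which again follows because the pendant vertex has degree $1$.) For a blowup of a cycle, an induced odd cycle must ``wind around'' the underlying cycle $C_r$ with $r$ odd and use exactly one vertex from each part encountered; a pendant vertex attached to exactly one such cycle vertex $x\in V_i$ would lie in $V_{i-1}$ or $V_{i+1}$, but the cycle already contains a vertex of each of $V_{i-1},V_{i+1}$, forcing a second adjacency. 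These are all short casework arguments.

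The substantive direction is the forward one: assume $G$ is connected and odd pan-free; show it is bipartite, complete multipartite, or a blowup of a cycle. First I would dispose of the bipartite case: if $G$ is bipartite we are done, so assume $G$ contains an odd cycle, and let $C = v_1 v_2 \cdots v_{2k+1}$ be a \emph{shortest} odd cycle (so $C$ is induced). The key structural claim is: because $G$ is connected and odd-pan-free, \emph{every} vertex $w \notin V(C)$ must have at least two neighbors on $C$ --- otherwise, taking a shortest path from $w$ to $C$ and letting $w'$ be the last vertex before reaching $C$, the vertex $w'$ has exactly one neighbor on $C$ (if it had $\ge 2$ we would shortcut to a shorter odd cycle or be done by induction on the path), producing an induced odd pan $C_{2j+1}^*$ for the relevant shortest induced odd cycle through $w'$. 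Making this precise requires care: one wants to argue that if any vertex has a unique neighbor on a shortest induced odd cycle one directly gets an induced $C_{2j+1}^*$, and if the vertex is farther away one pushes the argument along a shortest path.

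Next, with the fact that every off-cycle vertex has $\ge 2$ neighbors on $C$ in hand, I would analyze the two cases $k=1$ (triangle) and $k\ge 2$ separately. When $k \ge 2$: a vertex $w$ with exactly two neighbors $v_i, v_j$ on the cycle forces, by shortest-odd-cycle considerations, $v_i$ and $v_j$ to be at distance $2$ along $C$ (say $j = i+2$), since a chord of the odd cycle splits it into an even and an odd piece and $w$ together with the odd piece would give a shorter odd cycle unless that piece has length $2$; iterating this, one shows $G$ is a blowup of the $(2k+1)$-cycle --- each $v_\ell$ together with all vertices ``attached the same way'' forms a part $V_\ell$, and one checks adjacency between parts is exactly the cyclic pattern, using odd-pan-freeness again to rule out extra edges within a would-be part. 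When $k = 1$ (so $C$ is a triangle and $G$ is not bipartite but contains no longer shortest odd cycle --- though one must be careful, $G$ might still contain longer odd cycles): here every vertex sees $\ge 2$ of the triangle, and a standard argument shows a triangle-containing, paw-free, connected graph is complete multipartite --- one defines the equivalence ``$u \sim v$ iff $u=v$ or $u,v$ nonadjacent'' and uses paw-freeness (together with absence of induced larger odd pans to handle the general odd-pan-free case) to show $\sim$ is an equivalence relation whose classes are the parts. I expect the \textbf{main obstacle} to be organizing the forward direction cleanly: the interplay between ``shortest odd cycle'' and ``shortest path to it'' needs a careful induction, and one must be vigilant that the various local configurations one extracts are genuinely \emph{induced} copies of $C_{2j+1}^*$ (no accidental chords), which is exactly where every hypothesis --- connectivity, odd-pan-freeness for \emph{all} $j \ge 1$, and minimality of the chosen cycle --- gets used.
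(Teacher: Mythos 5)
Your proposal is correct in outline but takes a genuinely different route from the paper. Both proofs share the same top-level case split (bipartite; contains a triangle $\Rightarrow$ complete multipartite; triangle-free and non-bipartite $\Rightarrow$ blowup of a cycle) and both use odd-pan-freeness to force adjacencies, but the mechanism differs. You build the structure \emph{outward} from a shortest odd cycle $C$: classify the neighborhood of every off-cycle vertex on $C$ (at least two neighbors, and in the $k\ge 2$ case exactly two neighbors at distance two along $C$), then assign each vertex to a ``position'' and verify all pairwise adjacencies between off-cycle vertices. The paper instead runs an extremal argument: it takes a \emph{maximal} induced subgraph $H$ isomorphic to the target structure (a complete $r$-partite graph where $r$ is the clique number, resp.\ a blowup of a $(2k+1)$-cycle), supposes some vertex $u$ adjacent to $H$ lies outside it, and uses odd-pan-freeness and shortest-odd-cycle minimality to show $u$ can be absorbed into one of the parts, contradicting maximality. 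The paper's maximality trick means it only ever analyzes one new vertex at a time against a structure already known to be a complete multipartite graph or cycle blowup, which sidesteps the step your sketch compresses into ``iterating this, one shows $G$ is a blowup'': namely verifying adjacency and non-adjacency between every \emph{pair} of off-cycle vertices (same position $\Rightarrow$ non-adjacent via triangle-freeness, adjacent positions $\Rightarrow$ adjacent via an induced $C_{2k+1}^*$ on a swapped-in cycle, distant positions $\Rightarrow$ non-adjacent via a shorter odd cycle). That verification does go through, and your propagation-along-shortest-paths argument for the ``every off-cycle vertex has $\ge 2$ neighbors on $C$'' claim also works (a vertex at distance $\ge 2$ from $C$ yields a paw or an induced odd pan on a translated copy of $C$), so your plan is sound; it simply front-loads more bookkeeping than the paper's version. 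Your triangle case is essentially Olariu's theorem that a connected paw-free graph containing a triangle is complete multipartite, which the paper's Lemma~\ref{lem:multipartitePan} reproves via the same maximality device; note that only $C_3^*$-freeness is needed there, not the larger pans.
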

We will prove this proposition through the following two lemmas.

\begin{lemma}\label{lem:multipartitePan}
    If $G$ be a connected graph which is odd pan-free and which contains a triangle, then $G$ is a complete multipartite graph.
\end{lemma}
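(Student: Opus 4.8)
The plan is to use the odd pan-free hypothesis together with the presence of a triangle to show the relation ``$u\sim v$'' defined by ``$u=v$ or $u$ and $v$ are non-adjacent'' is an equivalence relation on $V(G)$; since a graph whose non-adjacency relation is an equivalence relation is exactly a complete multipartite graph (the equivalence classes being the parts), this will finish the proof. Reflexivity is built in and symmetry is trivial, so the whole content is \emph{transitivity}: if $x$ is non-adjacent to $y$ and $y$ is non-adjacent to $z$, then $x$ is non-adjacent to $z$.

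First I would record the key local obstruction: in a paw-free ($C_3^*$-free) graph, no vertex can be adjacent to exactly one vertex of a triangle. More generally, odd pan-freeness forbids, for every odd cycle $C$ in $G$ and every vertex $w\notin V(C)$, that $w$ has exactly one neighbor on $C$ — otherwise $C$ together with $w$ induces some $C_{2k+1}^*$. I will use the $k=1$ (paw) case constantly and the general case once or twice. A useful warm-up consequence: if $T=\{a,b,c\}$ is a triangle and $w\notin T$, then $w$ is adjacent to $0$, $2$, or $3$ vertices of $T$.

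Now for transitivity, suppose toward a contradiction that $xy\notin E$, $yz\notin E$, but $xz\in E$. Since $G$ is connected, I would first argue we may find a triangle ``near'' the vertices $x,y,z$ — more carefully, I would run the argument by induction on distance or by carefully transporting the given triangle $T_0$ along a path to $\{x,y,z\}$, using the $0/2/3$ dichotomy above to show that adjacency/non-adjacency patterns propagate. The cleanest route: take a shortest path $P$ from $y$ to the triangle $T_0$; walking $T_0$ along $P$ one edge at a time, paw-freeness forces a triangle through (a neighbor of) $y$, and then one shows $y$ must actually lie in a triangle $\{y,p,q\}$. With a triangle at $y$ in hand, consider how $x$ and $z$ attach to it: $x$ sees $0,2$, or $3$ of $\{y,p,q\}$; since $xy\notin E$, $x$ sees $0$ or $2$, i.e. $x$ is non-adjacent to $y$ and either non-adjacent to both $p,q$ or adjacent to both. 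A short case analysis, always invoking the paw obstruction on the triangle $\{y,p,q\}$ and on the triangle $\{x,p,q\}$ (when it exists) and using $xz\in E$, $yz\notin E$, yields either an induced paw or a longer induced odd pan, the contradiction we want. I expect to need one more observation to glue the cases together — that the set of vertices adjacent to \emph{all} of a fixed triangle is itself a clique, and that two triangles sharing an edge behave coherently — both of which again reduce to the paw obstruction.

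The main obstacle, I expect, is not any single case but organizing the transitivity argument so that the triangle is genuinely available where it is needed: a priori the given triangle could be ``far'' from $\{x,y,z\}$, and one must propagate structure along a path while controlling every intermediate attachment pattern. I would handle this by proving a lemma of the form ``if $G$ is connected, paw-free, and has a triangle, then every vertex lies on a triangle'' (equivalently, $G$ has no ``pendant-ish'' vertices relative to triangles), which is itself a clean induction on distance to the nearest triangle using the $0/2/3$ dichotomy; once every vertex — in particular $y$ — lies on a triangle, the finite case analysis above closes out transitivity, and hence $G$ is complete multipartite.
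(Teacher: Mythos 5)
Your overall strategy is sound and genuinely different from the paper's. The paper takes a maximal induced complete $r$-partite subgraph $H$ (where $r$ is the clique number, $r\ge 3$) and shows that any vertex $u$ adjacent to $H$ can be absorbed into one of the parts, using the paw obstruction twice; maximality then forces $H=G$. You instead aim to show that non-adjacency is transitive, which is an equivalent formulation, and your two main tools --- the $0/2/3$ dichotomy for attachments to a triangle, and the propagation lemma that every vertex lies on a triangle --- are both correct and cleanly proved by the arguments you sketch. Three of your four attachment cases do close exactly as you say: if $x$ sees $\{p,q\}$ and $z$ sees $\{p,q\}$, then $y$ has exactly one neighbor on the triangle $\{x,z,p\}$ (a paw); if exactly one of $x,z$ sees $\{p,q\}$, then the other has exactly one neighbor (namely its partner via the edge $xz$) on the resulting triangle (again a paw).

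The genuine gap is the remaining case, where \emph{both} $x$ and $z$ are non-adjacent to all of $\{y,p,q\}$. There the five vertices $x,z,y,p,q$ induce a triangle together with a disjoint edge, which contains no induced paw and no induced odd pan, so the promised contradiction does not appear; and neither of your ``gluing'' observations (common neighbors of a triangle form a clique; triangles sharing an edge behave coherently) addresses it. You must invoke connectivity a second time. The cleanest fix, in the spirit of your own Step 2, is the stronger propagation lemma: \emph{in a connected paw-free graph, every vertex outside a fixed triangle $T$ is adjacent to at least two vertices of $T$.} Indeed, let $S$ consist of $T$ together with all vertices having a neighbor in $T$ (each such vertex has $\ge 2$ neighbors in $T$ by the dichotomy); if $S\ne V(G)$, connectivity gives an edge $wu$ with $w\in S\setminus T$ and $u\notin S$, and then $u$ has exactly one neighbor on the triangle formed by $w$ and two of its neighbors in $T$ --- a paw. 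With this lemma the bad case cannot arise (both $x$ and $z$ must see two of $\{y,p,q\}$, hence both see $\{p,q\}$), and in fact it subsumes your ``every vertex lies on a triangle'' step, so the whole transitivity argument reduces to the single surviving case. With that repair your proof is complete and is arguably more modular than the paper's, at the cost of one extra global lemma.
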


\begin{proof}
    Let $r$ be the maximum size of a clique of $G$, and note that $r\geq 3$ by hypothesis. Let $H$ be a maximal induced subgraph of $G$ which is isomorphic to a complete $r$-partite graph with non-empty parts, say with parts $V_1,\ldots,V_r$. Note that any $r$-clique of $G$ is trivially a complete $r$-partite induced subgraph, so such a maximal induced subgraph exists. We claim that $H=G$. 
    
    Suppose not, and let $u\in G\setminus H$. Since $G$ is connected, there is a path from $u$ to $H$, so we shall assume that $u$ is adjacent to $H$. Now, $u$ is not adjacent to some $v_i\in V_i$ for all $i$, as this would imply that $u$ together with the $v_i$ form an $(r+1)$-clique in $G$. Hence without loss of generality, we may assume $u$ is not adjacent to any vertex in $V_1$ and that it is adjacent to some $v_2\in V_2$.  If $u$ is not adjacent to some $v_k\in V_k$ for $k\geq 3$, then $u,v_1,v_2,v_k$ forms a copy of $C_3^*$ in $G$, which is a contradiction to $G$ being odd pan-free. Thus, $u$ is adjacent to every element of $V_3,\dots,V_r$, and critically we observe that $V_3\ne \emptyset$ since $r\ge 3$. 
    
    We claim that $u$ is adjacent to every element of $V_2$. Suppose there is some $v_2'\in V_2$ which is not adjacent to $u$.  Since $V_3$ is nonempty, we can take any $v_3\in V_3$ (which is adjacent to $u$) and form a $C_3^*$ out of $u,v_1,v_2',v_3$, which is a contradiction. Thus $u$ is adjacent to every element of $V_2$, as well as every element of $V_3,\ldots,V_r$, and is not adjacent to any element of $V_1$. This means $\{u\}\cup V_1,V_2,\dots,V_r$ forms an induced complete $r$-partite subgraph of $G$ that contains $H$, a contradiction to the maximality of $H$.  We conclude that $H=G$, completing the proof.  
    \end{proof}

The next lemma deals with the case when $G$ is triangle-free.  Here we recall that a graph is a blowup of a cycle if it has vertex set $V_1\cup \cdots \cup V_r$ and edges $uv$ if and only if $u\in V_i$ and $v\in V_{i+1}$ for some $i$. 
\begin{lemma}\label{lem:cyclePan}
    If $G$ be a connected graph which is odd pan-free and which is triangle-free but not bipartite, then $G$ is a blowup of a cycle.
\end{lemma}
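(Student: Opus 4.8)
The plan is to start from a shortest odd cycle $C=v_1v_2\cdots v_\ell$ of $G$ (indices read modulo $\ell$), which exists because $G$ is not bipartite, and which satisfies $\ell=2k+1\ge 5$ because $G$ is triangle-free. Note that $C$ is an induced cycle, since any chord of an odd cycle splits it into a strictly shorter odd cycle and an even cycle. The goal is to produce a partition $V(G)=V_1\cup\cdots\cup V_\ell$ with $v_i\in V_i$ such that the edges of $G$ are exactly the pairs with one endpoint in $V_i$ and the other in $V_{i+1}$, which is precisely the condition that $G$ is a blowup of $C_\ell$.

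The crucial first step is to understand how an arbitrary vertex $u\notin C$ attaches to $C$: I will show that if $u$ has at least one neighbor on $C$, then it has exactly two, and they are of the form $v_{i-1},v_{i+1}$. No two neighbors of $u$ on $C$ can be consecutive on $C$ (else we get a triangle); $u$ cannot have exactly one neighbor on $C$ (else $C\cup\{u\}$ induces the forbidden odd pan $C_\ell^*$); if $v_a,v_b$ are both neighbors of $u$, then the two closed walks $u,v_a,\dots,v_b,u$ along the two arcs of $C$ are cycles of lengths $d+2$ and $\ell-d+2$, where $d,\ell-d\ge 2$ are the arc lengths, so both are $\le\ell$, and since the odd one of the two must have length $\ge\ell$ by minimality of $C$, we are forced to have $\{d,\ell-d\}=\{2,\ell-2\}$, i.e. $v_a$ and $v_b$ are at cyclic distance exactly $2$ on $C$; finally, the graph on $V(C)$ joining two vertices when they are at cyclic distance exactly $2$ is itself a cycle of length $\ell$ (since $\ell$ is odd, so $\gcd(2,\ell)=1$), hence triangle-free, so $u$ cannot have three pairwise-distance-$2$ neighbors. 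This lets me define $V_i$ to be $\{v_i\}$ together with all $u\notin C$ whose two neighbors on $C$ are $v_{i-1}$ and $v_{i+1}$; the $V_i$ are pairwise disjoint since for $\ell\ge 5$ the set $\{i-1,i+1\}$ determines $i$.

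The engine for the remaining steps is a rotation trick: for $u\in V_i$, let $C_i(u)$ be the cycle obtained from $C$ by replacing $v_i$ with $u$; this is again a shortest \emph{induced} odd cycle, since it has length $\ell$ and is chordless (the only neighbors of $u$ on $C$ are $v_{i-1},v_{i+1}$). Applying the first step to the cycles $C_i(u)$ then yields everything. Every vertex of $G$ lies in some $V_i$: otherwise connectivity produces an edge $w'u$ with $u\in V_i$ adjacent to $C$ and $w'$ adjacent to no vertex of $C$, whence $C_i(u)\cup\{w'\}$ induces an odd pan, a contradiction. Each $V_i$ is independent, since $u\sim u'$ for $u,u'\in V_i$ would create the triangle $u,u',v_{i+1}$. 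Consecutive parts are completely joined: for $u\in V_i$ and $u'\in V_{i+1}$, the vertex $u'$ is adjacent to $v_{i+2}\in C_i(u)$, so by the first step its two neighbors on $C_i(u)$ are at cyclic distance $2$ on $C_i(u)$; the second such neighbor can only be $u$ or $v_{i+4}$, and $u'\not\sim v_{i+4}$ because $u'\in V_{i+1}$, forcing $u\sim u'$. Non-consecutive parts have no edges: an edge $uu'$ with $u\in V_i$ and $u'\in V_j$ at cyclic distance $\ge 2$ would, after ruling out the possibility $u'=v_j$ (which would either hand $u$ a third neighbor on $C$ or make $v_iv_j$ a chord of $C$), exhibit a vertex $u'$ off the shortest induced odd cycle $C_i(u)$ with three neighbors $u,v_{j-1},v_{j+1}$ on it, contradicting the first step. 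Combining these facts, $G$ is exactly the blowup of $C_\ell$ with parts $V_1,\dots,V_\ell$.

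The main obstacle is the first step — pinning down the attachment of off-cycle vertices, in particular the ``cyclic distance exactly $2$'' claim and the exclusion of three neighbors — together with phrasing it in a way that applies verbatim to every rotated cycle $C_i(u)$, so that the later arguments can simply invoke it. A secondary nuisance is the handful of degenerate cases in the later steps in which an ``outside'' vertex is actually one of the $v_j$; these are dispatched by observing that such a vertex would overload a neighbor count on $C$ itself or contradict $C$ being induced.
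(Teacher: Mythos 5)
Your proof is correct, but it follows a genuinely different route from the paper's. The paper takes a \emph{maximal} induced subgraph $H$ of $G$ that is a blowup of $C_{2k+1}$ and derives a contradiction from a single vertex $u$ outside $H$ but adjacent to it: triangle-freeness kills neighbors in $V_2\cup V_{2k+1}$, minimality of the odd girth kills $V_4,\dots,V_{2k-1}$ and forbids hitting both $V_3$ and $V_{2k}$, and odd-pan-freeness forces $u$ to dominate some part, so $H$ could have been enlarged. You instead fix one shortest induced odd cycle $C$ and build the partition canonically: your Step 1 (every off-cycle vertex meeting $C$ has exactly two neighbors there, at cyclic distance exactly $2$) uses the same three ingredients (triangle-freeness, pan-freeness, odd-girth minimality, plus the observation that the distance-$2$ graph of an odd cycle is again an induced-triangle-free cycle), but you then must separately verify adjacencies between pairs of vertices both off $C$ — something the paper's maximality argument never confronts, since it only ever compares one external vertex against the already-blown-up $H$. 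Your rotation trick (replacing $v_i$ by $u$ to get another shortest induced odd cycle $C_i(u)$ and reapplying Step 1) is the extra idea that closes this gap, and it checks out: $C_i(u)$ is indeed induced of the same odd length, and the case analyses for coverage, independence of parts, completeness between consecutive parts, and absence of other edges (including the degenerate subcases where a vertex coincides with some $v_j$) are all sound. The trade-off is that your argument yields an explicit, canonical partition directly from $C$ and avoids an extremal choice, at the price of the rotation lemma and a handful of degenerate cases; the paper's version is locally simpler per step but leans on the maximality of $H$ to do the bookkeeping for you.
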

\begin{proof}
    Assume that the shortest odd cycle of $G$ has length $2k+1$, noting that such a cycle exists with $2k+1\ge 5$ by hypothesis of $G$ being non-bipartite and triangle-free.  Let $H$ be a maximal induced subgraph of $G$ which is isomorphic to a blowup of a cycle of length $2k+1$, and let $V_1,\ldots,V_{2k+1}$ be its parts.  We claim that $H=G$.  

    Suppose not, and let $u\in G\setminus H$. Since $G$ is connected, there is a path from $u$ to $H$, so we shall assume that $u$ is adjacent to $H$, say that it is adjacent to $v_1\in V_1$. 

    We claim that $u$ is not adjacent to any vertex in $V_2\cup V_{2k+1}$.  Indeed, if $u$ was adjacent to some $v_2\in V_2$, then $u,v_1,v_2$ would form a triangle in $G$, a contradiction.  A symmetric argument shows $u$ can not be adjacent to any vertex in $V_{2k+1}$.

    We claim that $u$ is not adjacent to any vertex in $V_i$ for $4\le i\le 2k-1$.  Indeed, assume for contradiction that $u$ is adjacent to some $v_i\in V_i$, and for each $j\ne 1,i$ let $v_j$ be some vertex in $V_j$.  Observe that if $i$ is odd, then the vertices $v_1,u,v_i,v_{i+1},\ldots,v_{2k+1}$ form an odd cycle of length $2k+1-(i-3)$ (since it excludes vertices from $V_2,\ldots,V_{i-1}$ but includes $u$), contradicting $G$ having no odd cycles of length shorter than $2k+1$.  Similarly if $i$ is even then $v_1,u,v_i,v_{i-1},\ldots,v_2$ gives a contradiction.

    We claim that $u$ can not be adjacent to vertices in both $V_3$ and  $V_{2k}$.  Indeed, say it were adjacent to some $v_3\in V_3$ and $v_{2k}\in V_{2k}$ and let $v_i\in V_i$ for all other $i$.  Then $u,v_3,v_4,\ldots,v_{2k}$ is a cycle of length $2k-1$ in $G$, a contradiction.

    We claim that there exists some $i\ne 1$ such that $u$ is adjacent to every vertex of $V_i$.  Indeed, if for all $i\ne 1$ there existed a $v_i\in V_i$ which $u$ was not adjacent to, then $u,v_1,\ldots,v_{2k+1}$ would induce a $C_{2k+1}^*$ in $G$, a contradiction.

    With all of the claims above, we can assume $u$ is adjacent to some $v_1\in V_1$, every vertex of $v_3\in V_3$, and that it is adjacent to no vertices in $\bigcup_{i\ne 1,3} V_i$.  A symmetric argument to the previous claim shows that $u$ must be adjacent to every vertex of $V_1$.  Hence $V_1,V_2\cup \{u\},V_3,\ldots,V_{2k+1}$ induce a larger blowup of a cycle of length $2k+1$ in $G$, a contradiction.  We conclude that $H=G$ as desired.
\end{proof}

With these two lemmas we can easily prove \Cref{prop:structure}, and again we recall that this immediately implies \Cref{thm:induced} when combined with \Cref{lem:pans}.

\begin{proof}[Proof of \Cref{prop:structure}]
    It is straightforward to verify that complete multipartite graphs, blowups of cycles, and bipartite graphs are all odd pan-free (with this result also implicitly following from \Cref{thm:nuEta} and \Cref{lem:pans}).  If $G$ is a connected odd pan-free graph which contains a triangle, then \Cref{lem:multipartitePan} implies that $G$ is complete multipartite.  Otherwise $G$ is either bipartite or \Cref{lem:cyclePan} implies that $G$ is a blowup of a cycle, completing the proof.
\end{proof}

\section{Optimal bounds on \texorpdfstring{$\nu(T)$}{} for trees}\label{sec bounds for nu on tree}
Here we prove \Cref{thm:etaTree}, which we recall says that if $T$ is a tree on $2n+1$ vertices, then
\[n! 2^n\le \nu(T)=\eta(T)\le (2n)!,\]
with equality holding in the lower bound if and only if $T$ is a hairbrush, and equality holding in the upper bound if and only if $T$ is a star.

To aid with our proofs, given a tree $T$, we define 
\[\X(T)=\{x\in V(T):\textrm{each component of }T-x\textrm{ has an even number of edges}\},\]
and we will denote this simply by $\X$ whenever $T$ is understood.  Our motivation for this definition is the following.
\begin{lemma}\label{cor:treeInductiveHypothesis}
	If $T$ is a tree with an even number of edges, then
	\[\nu(T)=\sum_{x\in \X} \nu(T-x).\]
\end{lemma}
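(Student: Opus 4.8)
The plan is to run the vertex-deletion recursion for $\nu$ and observe that all the ``bad'' terms vanish, leaving exactly a sum over $\X$. Since $T$ is a tree it is bipartite, and since $e(T)$ is even, \Cref{cor:nuBipartite} applies in the ``otherwise'' case to give the exact identity
\[\nu(T)=\sum_{v\in V(T)}\nu(T-v).\]
So the only thing left to show is that the terms with $v\notin\X$ contribute nothing.

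The key step is therefore the claim: if $v\notin\X$, then $\nu(T-v)=0$. This is immediate from the definitions: by the definition of $\X$, a vertex $v\notin\X$ is one for which the forest $T-v$ has at least one connected component with an odd number of edges. By \Cref{cor:nuGeneral}(a), any graph with a component having an odd number of edges has $\nu=0$, so $\nu(T-v)=0$ as needed. (Equivalently, one could invoke \Cref{prop:nuGeneral}(c), which writes $\nu$ of a disjoint union as a multinomial coefficient times the product of the $\nu$ of the components, one of which is $0$ by \Cref{prop odd edges to evaluation at -1}.)

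Combining the two displays, the sum over $V(T)$ collapses to a sum over $\X$:
\[\nu(T)=\sum_{v\in V(T)}\nu(T-v)=\sum_{x\in\X}\nu(T-x),\]
which is the claimed identity. There is no real obstacle here beyond invoking the correct earlier results; the only point requiring a moment's care is the bookkeeping that ``$v\notin\X$'' is precisely the condition forcing $T-v$ to have an odd component, which is exactly the hypothesis of \Cref{cor:nuGeneral}(a). (Note that $T-x$ is in general a forest rather than a tree, but $\nu$ is defined for arbitrary graphs, so this causes no difficulty; when one later iterates the identity, \Cref{prop:nuGeneral}(c) reduces back to the tree components.)
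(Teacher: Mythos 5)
Your argument is correct and is essentially identical to the paper's own proof: apply \Cref{cor:nuBipartite} to get $\nu(T)=\sum_{v}\nu(T-v)$, then kill the terms with $v\notin\X$ via \Cref{cor:nuGeneral}(a). Nothing to add.
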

\begin{proof}
	By \Cref{cor:nuBipartite}, we have
	\begin{equation}\nu(T)=\sum_{x\in V(T)} \nu(T-x)=\sum_{x\in \X} \nu(T-x)+\sum_{x\in V(T)\setminus \X} \nu(T-x)=\sum_{x\in \X} \nu(T-x),\label{eq:X}\end{equation}
	where the last equality follows from \Cref{cor:nuGeneral}(a).
\end{proof}
With this lemma in mind, the idea for the proofs of the upper and lower bounds is as follows: we first apply \Cref{cor:treeInductiveHypothesis} and then  use induction to bound each of the terms $\nu(T-x)$ in the sum. Finally, we bound our total sum in terms of  $|\X|$ and show that equality can only occur when $|\X|=1$.

Throughout our proofs, we make heavy use of the fact that if $T'$ is a graph on $2n$ vertices with connected components $T_1,\ldots,T_r$ and $n_i=|V(T_i)|$, then
\begin{equation}\nu(T')={2n\choose n_1,\ldots,n_r} \prod_{i=1}^r \nu(T_i),\label{eq:treeDisjoint}\end{equation}
which follows from \Cref{prop:nuGeneral}(c).

\subsection{Lower bound for $\nu(T)$}
Here we prove that $\nu(T)\geq n!2^n$ for all trees $T$ with $2n+1$ vertices with equality when $T$ is the \textit{hairbrush $H_n$}. Recall that this graph $H_n$ is obtained by starting with a path $v_0-v_1-\cdots-v_n$ and then adding a leaf $u_i$ to each $v_i$ for $i\in [n]$; see \Cref{fig: the hairbrush c3}.  We begin by observing the following.

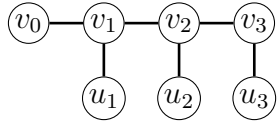
\begin{figure}[h]
	\centering
	\begin{tikzpicture}
		\node[vertex] (v0) at (0,0) {$v_0$};
		\node[vertex] (v1) at (1,0) {$v_1$};
		\node[vertex] (v2) at (2,0) {$v_2$};
		\node[vertex] (v3) at (3,0) {$v_3$};
		\node[vertex] (u1) at (1,-1) {$u_1$};
		\node[vertex] (u2) at (2,-1) {$u_2$};
		\node[vertex] (u3) at (3,-1) {$u_3$};
		\draw[edge] (v0)--(v1)--(v2)--(v3);
		\draw[edge] (v1)--(u1);
		\draw[edge] (v2)--(u2);
		\draw[edge] (v3)--(u3);
	\end{tikzpicture}
	\caption{The hairbrush $H_3$}
	\label{fig: the hairbrush c3}
\end{figure}

\begin{lemma}\label{lem:hairbrushEta}
	For $n\geq 0$, the hairbrush $H_n$ has $\nu(H_n)=n!2^n$. 
\end{lemma}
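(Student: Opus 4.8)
The statement to prove is $\nu(H_n) = n! 2^n$ for all $n \geq 0$, where $H_n$ is the hairbrush. Since $H_n$ is a tree (hence bipartite), \Cref{thm:nuEta} gives $\nu(H_n) = \eta(H_n)$, so it suffices to work with either quantity; I would work with $\nu$ and use the recurrence from \Cref{cor:treeInductiveHypothesis}. The plan is to induct on $n$, with the base cases $n = 0$ (a single vertex, $\nu = 1 = 0! 2^0$) and $n=1$ (the path $P_3$, which has $\nu(P_3) = 2$, the number of alternating permutations of size $3$, matching $1! 2^1$) handled directly.

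\textbf{Main inductive step.} The key is to identify $\X(H_n)$, the set of vertices $x$ such that every component of $H_n - x$ has an even number of edges. The hairbrush $H_n$ has $2n+1$ vertices and $2n$ edges. Removing a leaf $u_i$ leaves a connected graph with $2n-1$ edges (odd), so $u_i \notin \X$. Removing an internal spine vertex $v_i$ with $1 \le i \le n$: one should check the parities of the resulting components (the ``left piece'' containing $v_0,\dots,v_{i-1}$ together with their pendant leaves, the isolated leaf $u_i$, and the ``right piece'' containing $v_{i+1},\dots,v_n$ with their leaves). Removing $v_0$ leaves a connected ``comb'' on $2n$ vertices with $2n-1$ edges (odd), so $v_0 \notin \X$. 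The upshot I expect is that $\X(H_n)$ consists of exactly one vertex — most plausibly $v_1$, the unique vertex adjacent to $v_0$ — because removing $v_1$ splits $H_n$ into the isolated vertex $v_0$ (zero edges, even), the isolated leaf $u_1$ (zero edges, even), and $H_{n-1}$ (on vertices $v_2,\dots,v_n$ and $u_2,\dots,u_n$, which has $2(n-1)$ edges, even). Verifying that no other $v_i$ works is the one genuinely computational point: for $2 \le i \le n$ one of the two spine-pieces will have an odd number of edges, which I would check by a short parity count on the sizes of the pieces.

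\textbf{Assembling the bound.} Once $\X(H_n) = \{v_1\}$ is established, \Cref{cor:treeInductiveHypothesis} gives $\nu(H_n) = \nu(H_n - v_1)$. The graph $H_n - v_1$ is a disjoint union of an isolated vertex $v_0$ (a $K_1$, with $\nu = 1$), an isolated leaf $u_1$ ($K_1$, $\nu = 1$), and a copy of $H_{n-1}$ (on $2n-1$ vertices). Applying \eqref{eq:treeDisjoint} with component sizes $1, 1, 2n-1$ summing to $2n+1$ — wait, $H_n - v_1$ has $2n$ vertices, so the sizes are $1, 1, 2n-2$ — gives
\[
\nu(H_n) = \binom{2n}{1,1,2n-2}\, \nu(K_1)\,\nu(K_1)\,\nu(H_{n-1}) = 2n(2n-1)\,\nu(H_{n-1}).
\]
Hmm, this would give $\nu(H_n) = 2n(2n-1) \cdot (n-1)!2^{n-1}$, which does not equal $n!2^n$. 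So the decomposition must instead have $H_n - v_1$ being a disjoint union whose nontrivial component is isomorphic to $H_{n-1}$ on $2(n-1)+1 = 2n-1$ vertices, forcing the other components to have $1$ vertex total — i.e. $H_n - v_1$ should have only \emph{one} extra isolated vertex, not two. The correct reading is presumably that removing $v_1$ from $H_n$ detaches $v_0$ (isolated, one vertex) and leaves the rest connected as a copy of $H_{n-1}$ \emph{together with} the leaf $u_1$ reattached — but $u_1$ is pendant to $v_1$, so it does become isolated. I would resolve this discrepancy carefully: the likely truth is that $\X(H_n)$ is \emph{not} a single vertex, or the correct vertex to remove is one whose removal yields exactly $H_{n-1}$ plus one isolated vertex. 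Concretely, removing the leaf is wrong (odd), but there may be a pendant-path structure: if $v_0 v_1$ with $v_1$ also adjacent to $u_1$, then perhaps the relevant move peels off the ``$P_3$'' consisting of $v_0, v_1, u_1$ all at once via two applications of the recurrence, giving $\nu(H_n) = c \cdot \nu(H_{n-1})$ with $c = \frac{(2n+1)!}{(2n-1)!}\cdot\frac{1}{(\text{something})} = ?$; for the final answer $n!2^n/((n-1)!2^{n-1}) = 2n$, so we need the multiplicative factor to be exactly $2n$.

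\textbf{The main obstacle} is therefore precisely this bookkeeping: correctly determining $\X(H_n)$ and correctly applying the disjoint-union formula \eqref{eq:treeDisjoint} so that the constant comes out to $2n$. The cleanest route, which I would pursue, is to induct using \Cref{cor:treeInductiveHypothesis} but first prove a small auxiliary claim handling the ``pendant path of length two'' configuration $v_0 - v_1$ with $\deg(v_0)=1$: iterate the vertex-removal recurrence twice to show $\nu(H_n) = 2n \cdot \nu(H_{n-1})$ directly, then conclude $\nu(H_n) = 2n \cdot (n-1)! 2^{n-1} = n! 2^n$ by induction. Everything else — the base case, the bipartiteness giving $\nu = \eta$, and the disjoint-union multinomial — is routine.
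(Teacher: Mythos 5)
Your overall strategy — compute $\X(H_n)$, apply \Cref{cor:treeInductiveHypothesis} and \eqref{eq:treeDisjoint}, and induct — is exactly the paper's, but there is a genuine gap: you never correctly determine $\X(H_n)$, and your candidate $v_1$ is wrong. For $n\ge 2$, removing $v_1$ leaves the components $\{v_0\}$, $\{u_1\}$, and the comb on $v_2,\dots,v_n,u_2,\dots,u_n$; that comb has $2(n-1)$ vertices and $2n-3$ edges, which is odd, so $v_1\notin\X$. (It is also not isomorphic to $H_{n-1}$, since every spine vertex carries a leaf, whereas $H_{n-1}$ has the bare endpoint $v_0$.) This is why your multinomial came out as $2n(2n-1)$ instead of $2n$. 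The same parity count shows $v_i\notin\X$ for all $i\in[n-1]$: the component $\{v_{i+1},u_{i+1},\dots,v_n,u_n\}$ of $H_n-v_i$ has $2(n-i)-1$ edges, which is odd. The correct answer is $\X(H_n)=\{v_n\}$, the spine vertex \emph{farthest} from $v_0$: then $H_n-v_n=H_{n-1}\sqcup\{u_n\}$ with exactly one isolated vertex, component sizes $2n-1$ and $1$, multinomial coefficient $\binom{2n}{2n-1,1}=2n$, and hence $\nu(H_n)=2n\,\nu(H_{n-1})$, which closes the induction.

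Your proposed fallback of ``peeling off the pendant $P_3$ at the $v_0$ end via two applications of the recurrence'' does not repair this: \Cref{cor:treeInductiveHypothesis} sums over \emph{all} of $\X$, so you cannot choose which vertex to remove — you must actually identify $\X$, and the vertex you need sits at the opposite end of the spine from where you were looking. You correctly diagnosed that the required multiplicative factor is $2n$ and that the removed vertex must leave $H_{n-1}$ plus a single isolated vertex, but the proof as written never produces that vertex, so it does not go through.
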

\begin{proof}
	Note that $\nu(H_0)=1=0! 2^0$, so from now on we assume $n>0$.  Since $\{v_0,u_1,\dots,u_n\}$ are leaves and $e(H_n)$ is even, none of these vertices are in $\X$. For $i\in [n-1]$, one of the components of $H_n-v_i$ is the subgraph on $\{v_{i+1},u_{i+1},\dots,v_n,u_{n}$\}, which has an odd number of edges, so $v_i\notin \X$. On the other hand, $H_n-v_n=H_{n-1}\sqcup \{u_n\}$, so $v_n\in \X$. Hence $\X=\{v_n\}$, so by \Cref{cor:treeInductiveHypothesis} and \eqref{eq:treeDisjoint} we have 
	\[\nu(H_n)=\nu(H_n-v_n)=\nu(H_{n-1}\sqcup \{u_n\}) =2n\cdot \nu(H_{n-1}).\]
	This provides a recurrence relation for $\nu(H_n)$ for $n\ge 1$, which combined with the initial condition $\nu(H_0)=1$ gives the desired formula.\qedhere
	
\end{proof}

In view of \Cref{cor:treeInductiveHypothesis}, to show that $\nu(T)>0$ (let alone that $\nu(T)\ge n! 2^n$), it is necessary to show the following.

\begin{lemma}\label{lem:treeNonzero}
	If $T$ is a tree with an even number of edges, then $\X\ne \emptyset$.
\end{lemma}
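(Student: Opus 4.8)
The claim is that any tree $T$ with an even number of edges has a vertex $x$ all of whose deletion-components have an even number of edges (i.e.\ $\X\neq\emptyset$). The natural approach is a \emph{centroid-type} argument: orient each edge of $T$ toward the ``heavier'' side and look for a sink. Concretely, for an edge $e=xy$ of $T$, deleting $e$ splits $T$ into two subtrees; exactly one of them has an odd number of edges (since $e(T)$ is even, the two parts have edge-counts summing to $e(T)-1$, which is odd). Orient $e$ \emph{away from} the endpoint lying in the odd-edge side — equivalently, point $e$ toward the component containing the parity-obstruction. Since $T$ is a tree (acyclic), this orientation of all $e(T)$ edges has no directed cycle, so it has a sink vertex $x$: every edge at $x$ points \emph{into} $x$.

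I would then check that this sink $x$ lies in $\X$. Let $y_1,\dots,y_d$ be the neighbors of $x$, and let $C_j$ be the component of $T-x$ containing $y_j$. The edge $xy_j$ points into $x$, which by our orientation rule means the odd-edge side of $T - xy_j$ is the side \emph{not} containing $x$ — that is, the subtree $C_j \cup \{y_j\text{'s connecting edge to }x\}$... more carefully, the subtree on $V(C_j)$ together with edge $xy_j$ has an odd number of edges, hence $C_j$ itself has an even number of edges. This holds for every $j$, so every component of $T-x$ is even, i.e.\ $x\in\X$. (One should double-check the edge case $d=1$, i.e.\ $x$ a leaf: then $T-x$ is a single tree with $e(T)-1$ edges, which is odd, so a leaf is a sink only if... actually a leaf's unique edge points into it iff the other side is the odd side; since the other side has $e(T)-1$ odd edges, a leaf \emph{is} always a sink under this orientation when $e(T)$ is even — wait, that would contradict $x\in\X$. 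I need to recheck the orientation convention so that leaves are \emph{sources}, not sinks: orient $xy$ \emph{toward} the endpoint whose side is even. Then a sink $x$ has every incident $C_j$ even, giving $x\in\X$, and a leaf, whose single edge's far side is odd, is a source. The acyclicity still guarantees a sink exists.)

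The only real content is verifying that a sink exists and that ``sink'' is exactly the $\X$ condition; both are short once the orientation is set up correctly, so I do not anticipate a serious obstacle. An alternative, perhaps cleaner, phrasing avoiding orientations: take $x$ to be a vertex minimizing the maximum edge-count among components of $T-x$ — or better, run the following process: start at any vertex; if some component $C$ of $T-x$ has an odd number of edges, move $x$ one step into $C$ (toward $C$); argue this strictly decreases $\max_C e(C)$ or some potential, so it terminates, and at termination $x\in\X$. I would likely present the orientation/sink version since it makes termination automatic via acyclicity of $T$. The subtlety to get right — and the one place to be careful — is the parity bookkeeping: ensuring the ``$C_j$ is even'' conclusion follows from the sink condition, which hinges on $e(T)$ being even so that exactly one side of each edge-deletion is odd.
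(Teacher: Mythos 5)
Your orientation/sink strategy is sound and genuinely different from the paper's proof, which instead inducts on $|V(T)|$ by examining the second vertex of a longest path. But the convention you finally settle on is wrong, and it is in fact the very convention you had just diagnosed as broken. For an edge $e=xy$, write $A_x$ and $A_y$ for the two components of $T-e$; since $e(A_x)+e(A_y)=e(T)-1$ is odd, exactly one side is odd. ``Orient $e$ toward the endpoint whose side is even'' is literally the same orientation as ``orient $e$ away from the endpoint whose side is odd,'' so your correction changes nothing: under it a leaf $x$ (whose side $A_x$ has $0$ edges, hence is even) receives its unique edge and is a \emph{sink}, not a source; and more generally a sink $x$ has $e(A_x)$ even for every incident edge, hence $e(C_j)=e(A_{y_j})=e(T)-1-e(A_x)$ \emph{odd} for every component $C_j$ of $T-x$ --- the exact opposite of $x\in\X$. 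So as written the proof does not go through.

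The fix is a single sign flip: orient each edge toward the endpoint whose own side is \emph{odd} (equivalently, so that the component behind the tail is even). Then a sink $x$ has $e(A_x)$ odd at every incident edge, hence every $e(C_j)=e(T)-1-e(A_x)$ is even and $x\in\X$; a leaf becomes a source; and a sink exists because any orientation of a tree is acyclic (the one-vertex tree being trivial). With that convention your argument is complete, avoids induction entirely, and is arguably cleaner than the paper's proof; note that the hypothesis that $e(T)$ is even is used exactly twice, once to ensure each edge has a unique odd side and once to convert ``near side odd'' into ``far side even.''
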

\begin{proof}
	We prove the result by induction on $n=|V(T)|$, the case $n=1$ being trivial. Suppose $n>1$ and let $x_1\cdots x_k$ be a longest path in $T$.  Note that every neighbor of $x_2$ other than $x_3$ is a leaf (as otherwise we could extend the path).  If $\deg(x_2)$ is even, then $T-x_2$ is the disjoint union of $\deg(x_2)-1$ copies of $K_1$ and a tree $T'$ with $e(T)-\deg(x_2)\equiv_2 0$ edges, so $x_2\in \X $.  Thus we may assume $\deg(x_2)$ is odd. 
	
	Let $T^*$ be $T$ after deleting all of the neighbors of $x_2$ other than $x_3$. Observe that $T^*$ is a tree with an even number of edges and with strictly fewer vertices than $T$ (since we have deleted $x_1$, in particular).  By the inductive hypothesis, there exists some vertex $y\in \X(T^*)$, i.e.\  $y$ is such that each connected component of $T^*-y$ has an even number of edges. Each component of $T^*-y$ is either a component of $T-y$ or it contains $x_2$. In the later case, the component of $T-y$ containing $x_2$ has $\deg(x_2)-1$ more edges than that in $T^*-y$. Since $\deg(x_2)-1$ is even by assumption, we have $y\in \X$, completing the proof.
\end{proof}

We will also need the following simple arithmetic inequality. 

\begin{lemma}\label{lem:treeArithmetic}
	Let $n$ be a non-negative integer and $(k_1,\ldots,k_r)$ a sequence of non-negative integers such that $r$ is even, and such that $n=r/2+\sum k_i$.  Then
	\[\prod_{i=1}^r \frac{k_i!}{2(2k_i+1)!}\ge \frac{(n-1)!}{2(2n-1)!},\]
	with equality if and only if $r=2$ and $\{k_1,k_2\}=\{n-1,0\}$.
\end{lemma}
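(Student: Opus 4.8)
The plan is to restate the inequality multiplicatively and then induct on $r$. After substituting $m_i:=2k_i+1$, so that $m_1,\dots,m_r$ are positive odd integers with $m_1+\cdots+m_r=2n$ and $r$ is even, the content of the lemma is captured by the clean inequality
\[ \prod_{i=1}^{r} m_i!! \ \le\ (2n-1)!!, \]
with equality exactly when $r=2$ and $\{m_1,m_2\}=\{2n-1,1\}$ (equivalently $\{k_1,k_2\}=\{n-1,0\}$). So the goal becomes a statement purely about odd double factorials.

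The engine is a super-multiplicativity estimate: for all odd integers $a,b\ge 1$,
\[ a!!\,b!! \ \le\ (a+b-1)!!, \]
with equality iff $a=1$ or $b=1$. I would establish this by induction on $b$ in steps of $2$: the base case $b=1$ is $a!!\le a!!$, and the passage from $a!!\,b!!\le (a+b-1)!!$ to $a!!\,(b+2)!!\le (a+b+1)!!$ amounts to multiplying by $b+2$ and using $b+2\le a+b+1$ (with equality iff $a=1$). Iterating gives $\prod_{i=1}^{s} m_i!!\le\bigl(m_1+\cdots+m_s-s+1\bigr)!!$ for any odd $m_1,\dots,m_s$.

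For the induction on the even integer $r$: the base case $r=2$ is immediate, since $m_1+m_2=2n$ yields $m_1!!\,m_2!!\le (2n-1)!!$ with equality iff $m_1=1$ or $m_2=1$. For $r\ge 4$, I would take any four of the parts, say $m_{r-3},m_{r-2},m_{r-1},m_r$, let $M$ be their sum (an even integer $\ge 4$), and replace these four parts by the two odd parts $M-1$ and $1$. The new list still consists of positive odd integers, has even length $r-2$, and has the same sum $2n$, so the inductive hypothesis applies to it. By the four-term instance of the iterated inequality, $m_{r-3}!!\cdots m_r!!\le (M-3)!! < (M-1)!! = (M-1)!!\cdot 1!!$, and hence
\[ \prod_{i=1}^{r} m_i!! \ <\ \Bigl(\textstyle\prod_{i\le r-4} m_i!!\Bigr)(M-1)!!\cdot 1!! \ \le\ (2n-1)!!, \]
the last step by the inductive hypothesis on the shorter list. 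This is strict, so no configuration with $r\ge 4$ achieves equality, and the equality characterization follows.

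The one point that requires care is the design of the reduction step: it must simultaneously preserve the parities ($r$ even, all parts odd), preserve the total $2n$, and strictly push $\prod m_i!!$ toward its maximum so that the induction is well-founded; the ``four odd parts $\mapsto (M-1,1)$'' move is engineered precisely to accomplish all three, and one only needs to check that $M-1$ is a legitimate odd part (true since $M$ is even and $\ge 4$). Once this scaffolding is fixed, the entire analytic content of the lemma is the two-line fact $a!!\,b!!\le (a+b-1)!!$ together with the trivial $(M-3)!!<(M-1)!!$.
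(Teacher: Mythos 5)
Your double-factorial inequality $\prod_i m_i!!\le(2n-1)!!$ is true, and your proof of it (the super-multiplicativity $a!!\,b!!\le(a+b-1)!!$ together with the merge of four parts into $(M-1,1)$) is sound. The gap is in the very first step: the substitution $m_i=2k_i+1$ does not reduce the lemma to that inequality. Since $(2k+1)!=2^k\,k!\,(2k+1)!!$, each factor satisfies $\frac{k_i!}{2(2k_i+1)!}=\frac{1}{2^{k_i+1}\,m_i!!}$, so the left side of the lemma equals $\bigl(2^{n+r/2}\prod_i m_i!!\bigr)^{-1}$ while the right side equals $\bigl(2^{n}(2n-1)!!\bigr)^{-1}$. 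The lemma is therefore equivalent to $2^{r/2}\prod_i m_i!!\le(2n-1)!!$, and your restatement silently discards the factor $2^{r/2}$. What you prove is strictly weaker than what is required for every $r\ge 4$, so the argument as written does not establish the lemma.

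The discarded factor also exposes an inconsistency in the statement itself: at the claimed equality configuration $r=2$, $\{m_1,m_2\}=\{2n-1,1\}$, the left-hand side of the lemma equals $\frac{(n-1)!}{4(2n-1)!}$, not $\frac{(n-1)!}{2(2n-1)!}$. Indeed, the paper's own proof computes the minimum to be $\frac{(n-r/2)!}{2^r(2n-r+1)!}$, which is $\frac{(n-1)!}{4(2n-1)!}$ at $r=2$, and the application in the proof of the tree lower bound uses exactly that corrected constant; the ``$2$'' in the displayed right-hand side should be a ``$4$''. With that correction the lemma becomes equivalent to $2^{r/2-1}\prod_i m_i!!\le(2n-1)!!$, which your scheme can be repaired to prove: in your reduction step the new product exceeds the old one by the ratio $(M-1)!!$ to the product of the four merged double factorials, which is at least $M-1\ge 3>2$, and this more than compensates for the halving of $2^{r/2-1}$ when $r$ drops to $r-2$, so the induction closes once the power of $2$ is carried along. (For comparison, the paper instead smooths a minimizer to the shape $(k_1,0,\dots,0)$ by a local exchange and then checks monotonicity in $r$.) But as submitted, the multiplicative restatement is not an equivalence, and the proof has a genuine gap.
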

\begin{proof}
	Let $\vec{k}=(k_1,\ldots,k_r)$ be a sequence as in the hypothesis of the lemma such that $\prod \frac{k_i!}{2(2k_i+1)!}$ is as small as possible. Without loss of generality, we may assume that $\vec{k}$ is weakly decreasing. We will prove the result by first showing that $\vec{k}$ is of the form $(k_1,0,0,\dots,0)$, and then that $r=2$.
	
	First assume for contradiction that $k_1\ge k_2>0$, and define the sequence $(k'_1,\ldots,k'_r)$ by $k'_i=k_i$ if $i>2$ and $k'_1=k_1+1,\ k'_2=k_2-1$.  Note that this sequence continues to satisfy the hypothesis of the lemma.  We claim that
	\[\prod_{i=1}^r \frac{k_i!}{2(2k_i+1)!}> \prod_{i=1}^r\frac{k_i'!}{2(2k_i'+1)!}.\]
	Since $k_i=k_i'$ for $i>2$, this is equivalent to saying
	\[\frac{k_1!k_2!}{4(2k_1+1)!(2k_2+1)!}>\frac{(k_1+1)!(k_2-1)!}{4(2k_1+3)!(2k_2-1)!},\]
	which further simplifies to
	\[\frac{k_2}{(2k_2+1)(2k_2)}> \frac{k_1+1}{(2k_1+3)(2k_1+2)} \iff\frac{1}{2k_2+1}> \frac{1}{2k_1+3}\]
	and this last bound holds since $k_1\ge k_2$.  This contradicts $\vec{k}$ being a minimizer, so we  conclude that $k_2=0$.
	
	Hence, we must have $\vec{k}=(k_1,0,\ldots,0)$, where necessarily $k_1=n-r/2$ by the hypothesis of the lemma.  In this case,
	\begin{equation*}\label{eq: function of n and r to make increasing}
		\prod_{i=1}^n \frac{k_i!}{2(2k_i+1)!}= \frac{k_1!}{2(2k_1+1)!}\cdot \frac{1}{2^{r-1}}=\frac{(n-r/2)!}{2^r(2n-r+1)!}.\end{equation*}
	Thus, to conclude the result it suffices to show that this function is strictly increasing for even $r\le n/2$, i.e.\ that for $r<n/2$
	\[\frac{(n-r/2)!}{2^r(2n-r+1)!}<\frac{(n-r/2-1)!}{2^{r+2}(2n-r-1)!}.\]
	This is equivalent to saying
	\[\frac{n-r/2}{(2n-r+1)(2n-r)}<\frac{1}{4},\]
	and indeed this quickly follows since $2n-r+1>2n-r\ge 2$.
\end{proof}
We now prove our lower bound for trees, which we restate below.
\begin{prop}\label{prop:lowerTree}
	If $T$ is a tree on $2n+1$ vertices, then $\nu(T)\geq 2^nn!$ with equality if and only if $T$ is the hairbrush $H_n$.
\end{prop}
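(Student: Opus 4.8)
The plan is to induct on $n$. The base case $n=0$ is immediate, since the only tree is $T=K_1=H_0$ and $\nu(K_1)=1=2^0\cdot 0!$. For the inductive step I would first observe that $T$ has $2n$ edges, an even number, so by \Cref{cor:treeInductiveHypothesis} we have $\nu(T)=\sum_{x\in\X}\nu(T-x)$, and by \Cref{lem:treeNonzero} the set $\X$ is nonempty. Thus it suffices to prove that $\nu(T-x)\ge 2^n n!$ for each individual $x\in\X$ and then to track when equality can hold throughout.

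To bound a single term, fix $x\in\X$. Since $T$ is a tree, $T-x$ is a forest with exactly $r:=\deg_T(x)$ components $T_1,\dots,T_r$, each a tree with an even number of edges (by the definition of $\X$), hence each of odd order, say $|V(T_i)|=2k_i+1$. Summing orders gives $\sum_i(2k_i+1)=2n$, so $r$ is even — in particular $r\ge 2$ — and $\sum_i k_i=n-r/2$. Feeding the inductive bound $\nu(T_i)\ge 2^{k_i}k_i!$ into the disjoint-union formula \eqref{eq:treeDisjoint} and writing the multinomial coefficient as $(2n)!/\prod_i(2k_i+1)!$ would give
\[
 \nu(T-x)\;\ge\;(2n)!\prod_{i=1}^{r}\frac{2^{k_i}k_i!}{(2k_i+1)!}\;=\;(2n)!\,2^{\sum_i k_i}\,2^{r}\prod_{i=1}^{r}\frac{k_i!}{2(2k_i+1)!}.
\]
Then \Cref{lem:treeArithmetic} bounds the remaining product from below, and substituting $\sum_i k_i=n-r/2$ and simplifying (using $r\ge 2$) should yield $\nu(T-x)\ge 2^n n!$. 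Summing over $x\in\X$ gives $\nu(T)=\sum_{x\in\X}\nu(T-x)\ge|\X|\cdot 2^n n!\ge 2^n n!$, which is the lower bound.

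For the equality statement I would run the inequalities backwards. If $\nu(T)=2^n n!$, then $\nu(T)\ge|\X|\cdot 2^n n!$ forces $|\X|=1$, say $\X=\{x\}$, and $\nu(T-x)=2^n n!$; the equality clause of \Cref{lem:treeArithmetic} then forces $r=\deg_T(x)=2$ with $\{k_1,k_2\}=\{n-1,0\}$, while equality in the inductive step forces $T_i\cong H_{k_i}$. Hence $T-x=T_1\sqcup T_2$ with $T_1\cong H_{n-1}$ and $T_2\cong H_0=K_1$, so $T$ arises from a copy of $H_{n-1}$ by joining $x$ to some vertex $a\in V(H_{n-1})$ and hanging a new pendant leaf $b$ (the vertex of $T_2$) off $x$. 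The final step is to pin down $a$: for $n\ge 2$, if $a$ were not the path-endpoint $v_{n-1}$ of $H_{n-1}$, then $v_{n-1}$ would still have degree $2$ in $T$ (neighbors $v_{n-2}$ and the leaf $u_{n-1}$), so $T-v_{n-1}$ would split into $\{u_{n-1}\}$ and a tree on the other $2n-1$ vertices, with $0$ and $2n-2$ edges respectively — both even — placing $v_{n-1}\in\X(T)$ and contradicting $\X(T)=\{x\}$ (note $v_{n-1}\notin\{x,b\}$). Thus $a=v_{n-1}$, and extending the defining path of $H_{n-1}$ by $x$ with $b$ as the new pendant leaf identifies $T$ with $H_n$; the case $n=1$ is trivial since $H_0=K_1$ has a unique vertex. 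Conversely $\nu(H_n)=2^n n!$ by \Cref{lem:hairbrushEta}, so the hairbrushes are exactly the minimizers.

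I expect the main obstacle to be the equality analysis rather than the numerical bound. One must strengthen the inductive hypothesis so that it carries the full ``$\nu=2^n n!$ if and only if $T=H_n$'' characterization (not just the inequality), and then, in the equality case, extract the precise location of the attachment vertex $a$ from the single constraint $|\X(T)|=1$. By contrast, the numerical half is routine bookkeeping once \Cref{lem:treeArithmetic} is in hand.
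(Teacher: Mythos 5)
Your proposal is correct and follows essentially the same route as the paper's proof: induct on $n$, use \Cref{cor:treeInductiveHypothesis} and \Cref{lem:treeNonzero} to reduce to bounding $\nu(T-x)$ for $x\in\X$, apply \eqref{eq:treeDisjoint} with the inductive hypothesis and \Cref{lem:treeArithmetic} to get $\nu(T-x)\ge n!\,2^{n+r/2-1}\ge n!\,2^n$, and then run the equality case back through $|\X|=1$, $r=2$, $\{k_1,k_2\}=\{n-1,0\}$ to force $T\cong H_n$. Your argument for pinning down the attachment vertex (showing $v_{n-1}\in\X$ otherwise) matches the paper's case analysis, so no further changes are needed.
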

\begin{proof}
	We prove the result by induction on $n$, the $n=0$ case being trivial.  Assume we have proven the result up to some value $n$ and let $T$ be a tree on $2n+1$ vertices.
	
	\begin{claim}
		For each $x\in \X$, we have $\nu(T-x)\ge n!2^n$ with equality only if $T-x$ is the disjoint union of $K_1$ and a hairbrush $H_{n-1}$.
	\end{claim}
	\begin{proof}
		let $T_1,\dots,T_r$ be the connected components of $T-x$, say with $n_i=|V(T_i)|$. Since each $T_i$ has an even number of edges, $n_i=2k_i+1$ for some non-negative integer $k_i$. By \eqref{eq:treeDisjoint} and induction, we have  \begin{equation}\nu(T-x)={2n\choose n_1,\ldots,n_r} \prod_{i=1}^r \nu(T_i)\ge {2n\choose n_1,\ldots,n_r} \prod_{i=1}^r k_i! 2^{k_i}.\label{eq:treeInductive}\end{equation} 
		Using $\sum_{i=1}^r k_i=\sum_{i=1}^r (n_i-1)/2=n-r/2$, we see that the quantity above can be rewritten as
		
		\begin{align}
			\binom{2n}{n_1,\dots,n_r}\prod_{i=1}^r k_i!2^{k_i}&=\frac{(2n)!}{n_1!\cdots n_r!}2^{k_1+\cdots+k_r}\prod_{i=1}^r k_i! \nonumber\\ \nonumber
			&=\frac{(2n)!}{(2k_1+1)!\cdots (2k_r+1)!}2^{n-r/2}\prod_{i=1}^r k_i!\\ \nonumber
			&=(2n)!2^{n-r/2}\prod_{i=1}^r \frac{k_i!}{(2k_i+1)!}\\ \nonumber
			&=(2n)!2^{n+r/2}\prod_{i=1}^r \frac{k_i!}{2(2k_i+1)!}\\ 
			&\geq (2n)!2^{n+r/2}\frac{(n-1)!}{2(2n-1)!} \label{eq:hairbrush}\\ 
			&= n! 2^{n+r/2-1}\geq n!2^{n},\nonumber
		\end{align}
		where \eqref{eq:hairbrush} used \Cref{lem:treeArithmetic}.  This proves the desired inequality of the claim.  Moreover, \Cref{lem:treeArithmetic} implies that equality in \eqref{eq:hairbrush} can only occur if $r=2$ and if, say, $T_1$ has one vertex and $T_2$ has $2n-1$ vertices.   Moreover, by induction we know \eqref{eq:treeInductive} can only hold with equality if $T_2$ is a hairbrush, proving the claim.
	\end{proof}
	By \Cref{cor:treeInductiveHypothesis} and the claim above, we have
	\begin{equation}\nu(T)=\sum_{x\in \X} \nu(T-x)\ge |\X|\cdot n! 2^n\ge n! 2^n,\label{eq:oneX}\end{equation}
	with this last inequality using \Cref{lem:treeNonzero}.  This gives the desired lower bound on $\nu(T)$. 
	
	Now suppose $\nu(T)=n! 2^n$.  This implies both inequalities of \eqref{eq:oneX} are equalities, which can only hold if $|\X|=1$, say with $\X=\{x\}$; and if $T-x$ consists of an isolated vertex $y$ together with a hairbrush $H_{n-1}$.  It remains to show that this implies $T$ must be the hairbrush $H_n$, for which it suffices to show that $x$ is adjacent to $v_{n-1}$ in $H_{n-1}$.
	
	If $x$ is adjacent to some $v_i$ or $u_i$ with $i\in [n-2]$, then $T-v_{n-1}$ has 2 components each with an even number of edges ($2n$ and 0 respectively), so $v_{n-1}\in \X$, a contradiction to $\X=\{x\}$. If $x$ is adjacent to $u_{n}$, then $T-v_{n-1}$ has 2 components each with an even number of edges ($2n-2$ and 2 respectively), so $v_{n-1}\in \tilde{X}$. Thus, in all cases, unless $x$ is adjacent to $v_{n-1}$, we have $\X\ne \{x\}$, which is a contradiction. Hence, we conclude that $T$ is $H_{n}$.   Finally, \Cref{lem:hairbrushEta} provides the other direction.
\end{proof}

\subsection{Upper bound for \texorpdfstring{$\nu(T)$}{}}
We will prove that $\nu(T)\leq (2n)!$ for all trees $T$ with $2n+1$ vertices with equality only when $T$ is the \textit{star graph} $K_{1,2n}$, which consists of a center vertex $v_0$ adjacent to $2n$ leaves $v_1,\ldots,v_{2n}$.


\begin{lemma}
	For all $n\ge 0$, $\nu(K_{1,2n})=(2n)!$.
\end{lemma}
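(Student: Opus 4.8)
The plan is to recognize $K_{1,2n}$ as a join and reduce to the edgeless graph via the results already in hand. Concretely, $K_{1,2n}=K_1\vee\overline{K_{2n}}$, the join of the center vertex $v_0$ with an independent set of $2n$ vertices; equivalently, $v_0$ has degree $2n=(2n+1)-1$ in the $(2n+1)$-vertex graph $K_{1,2n}$. I would invoke \Cref{cor:nuGeneral}(c): since the number of vertices $2n+1$ is odd, that corollary gives $\nu(K_{1,2n})=\nu(K_{1,2n}-v_0)$. (One could equally well apply \Cref{prop:nuGeneral}(d) with the factor $\left|\begin{bmatrix}2n+1\\1,2n\end{bmatrix}_{-1}\right|=\left|[2n+1]_{-1}\right|=1$, but \Cref{cor:nuGeneral}(c) is the most direct route.)

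The remaining step is to evaluate $\nu(K_{1,2n}-v_0)$. Deleting the center leaves the edgeless graph $\overline{K_{2n}}$ on $2n$ vertices, whose unique orientation $D$ is itself edgeless, so $\A_D(t)=\sum_{\sigma\in\mathfrak{S}_{2n}}t^0=(2n)!$ and hence $\nu(\overline{K_{2n}})=(2n)!$ (this value is also recorded in the introduction). Combining the two steps yields $\nu(K_{1,2n})=(2n)!$.

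A self-contained alternative, should one prefer to avoid \Cref{cor:nuGeneral}(c), is to compute $\eta(K_{1,2n})$ directly and then apply \Cref{thm:nuEta}: an ordering $\pi$ of $V(K_{1,2n})$ is an even sequence iff every prefix induces an even number of edges; since all edges are incident to $v_0$, a prefix of length $i$ that contains $v_0$ induces exactly $i-1$ edges, which is even precisely when $i$ is odd, so $v_0$ must be the last vertex of $\pi$, while the $2n$ leaves may appear in any order. Hence $\eta(K_{1,2n})=(2n)!$, and since stars are bipartite, $\nu(K_{1,2n})=\eta(K_{1,2n})=(2n)!$ by \Cref{thm:nuEta}. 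I do not anticipate any genuine obstacle here: the statement is a short consequence of results proved earlier in the paper, and the only real decision is which of these equivalent arguments to present.
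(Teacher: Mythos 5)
Your proposal is correct, and your primary argument takes a genuinely different route from the paper. The paper proves this lemma by counting even sequences directly: it observes that if $\pi_{2n+1}\ne v_0$ then the prefix $K_{1,2n}[\pi_1,\ldots,\pi_{2n}]$ is a copy of $K_{1,2n-1}$ with an odd number of edges, so every even sequence ends in $v_0$ and conversely, giving $\eta(K_{1,2n})=(2n)!$; it then invokes \Cref{thm:nuEta} since stars are bipartite. Your main argument instead exploits the join structure $K_{1,2n}=K_1\vee\overline{K_{2n}}$ via \Cref{cor:nuGeneral}(c) (equivalently \Cref{prop:nuGeneral}(d) with $\left|[2n+1]_{-1}\right|=1$), reducing to the trivial evaluation $\nu(\overline{K_{2n}})=(2n)!$. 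This is slightly more economical, as it only needs the elementary join formula rather than the full strength of \Cref{thm:nuEta}; on the other hand, the paper's route has the side benefit of exhibiting the even sequences explicitly, which fits the surrounding discussion of $\eta$ for trees. Your ``self-contained alternative'' is essentially verbatim the paper's proof (your deduction that $v_0$ must be last is stated a little tersely --- the point is that if $v_0$ sat at position $j\le 2n$, then whichever of $j,j+1$ is even would give a prefix inducing an odd number of edges --- but the reasoning is sound). Either version is acceptable.
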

\begin{proof}
	We prove $K_{1,2n}$ has exactly $(2n)!$ even sequences, from which the result follows by \Cref{thm:nuEta} since $K_{1,2n}$ is bipartite.  Let $\pi$ be an ordering of the vertices of $T$.  If $\pi_{2n+1}\ne v_0$, then the induced subgraph $K_{1,2n}[\pi_1,\ldots,\pi_{2n}]$ is isomorphic to $K_{1,2n-1}$ which has an odd number of edges, so $\pi$ is not an even sequence.  Hence, all even sequences $\pi$ have $\pi_{2n+1}=v_0$, and it is easy to see that every $\pi$ with this property is in fact an even sequence.
\end{proof}

We next prove some structural results regarding the set $\X$.

\begin{lemma}\label{lem:ComponentsIntersectingXOfX}
	Let $T$ be a tree with an even number of edges.
	\begin{enumerate}
		\item[(a)] No vertex $x\in \X$ is a leaf, and no two vertices of $\X$ are adjacent.
		\item[(b)] For $x\in \X$, let $T_1,\dots,T_r$, be the connected components of $T-x$ with $n_i=|V(T_i)|$. If $T_i$ contains $\ell_i$ vertices of $\X$, then $n_i\ge 2\ell_i+1$ for each $i\in [r]$.
	\end{enumerate}
\end{lemma}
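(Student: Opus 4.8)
The plan is to establish the two parts in order, both resting on the following parity observation: if $S$ is a tree with an even number of edges, then every $x\in\X(S)$ has even degree in $S$, since deleting $x$ removes exactly $\deg_S(x)$ edges and so $e(S)=\deg_S(x)+\sum_C e(C)$, where $C$ ranges over the components of $S-x$; as $e(S)$ and each $e(C)$ are even, so is $\deg_S(x)$. In particular $\deg_T(x)\neq 1$ for every $x\in\X(T)$, which gives the first half of (a). For the second half, suppose $x,y\in\X(T)$ are adjacent and let $A\ni x$ and $B\ni y$ be the two trees obtained by deleting the edge $xy$; from $e(T)=e(A)+e(B)+1$ being even, exactly one of $e(A),e(B)$ is odd. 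But $B$ is precisely the component of $T-x$ containing $y$ (the only edge of $T$ leaving $B$ is $xy$), so $e(B)$ is even because $x\in\X(T)$; symmetrically $e(A)$ is even because $y\in\X(T)$. This contradiction shows that no two vertices of $\X$ are adjacent.

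For (b), fix $x\in\X(T)$ and let $r=\deg_T(x)$, which is even by the observation above; for each component $T_i$ of $T-x$ let $w_i$ be the unique neighbour of $x$ in $T_i$. I would first show $w_i\notin\X(T)$: the component of $T-w_i$ containing $x$ is $\{x\}\cup\bigcup_{j\neq i}V(T_j)$, whose number of edges is $(r-1)+\sum_{j\neq i}e(T_j)\equiv r-1\equiv 1\pmod 2$ since each $e(T_j)$ is even. Next I would show $V(T_i)\cap\X(T)\subseteq\X(T_i)$: any $y\in V(T_i)\cap\X(T)$ satisfies $y\neq w_i$, so all neighbours of $y$ lie in $T_i$ and $\deg_{T_i}(y)=\deg_T(y)$ is even; the components of $T_i-y$ not containing $w_i$ are components of $T-y$ and hence have an even number of edges, while the component $Q$ of $T_i-y$ containing $w_i$ has $e(Q)=e(T_i)-\deg_{T_i}(y)-(\text{the rest})\equiv 0\pmod 2$ edges, so $y\in\X(T_i)$. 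Combining this with part (a), the set $V(T_i)\cap\X(T)$ is an independent set of $T_i$, each of whose vertices $y$ has $\deg_{T_i}(y)=\deg_T(y)\ge 2$; that is, it is an independent set of non-leaves of $T_i$, and by definition its size is $\ell_i$.

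It then remains only to prove the elementary fact that in any tree $S$ an independent set $I$ of non-leaves satisfies $2|I|\le |V(S)|-1$ (the case $I=\emptyset$ being trivial). This follows by a short double count: since $I$ is independent, the number of edges of $S$ incident to $I$ equals $\sum_{v\in I}\deg_S(v)\ge 2|I|$, and all of these edges lie inside the forest $S[I\cup N(I)]$, which has at most $|I|+|N(I)|-1$ edges; hence $2|I|\le |I|+|N(I)|-1\le |I|+(|V(S)|-|I|)-1$. Applying this with $S=T_i$ whenever $n_i\ge 2$, and observing that when $T_i\cong K_1$ we have $\ell_i=0$ directly from $w_i\notin\X(T)$, we obtain $\ell_i\le\tfrac{1}{2}(n_i-1)$ in every case, i.e.\ $n_i\ge 2\ell_i+1$.

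I expect the main obstacle to be the parity bookkeeping in (b): showing that the attachment vertex $w_i$ never lies in $\X(T)$, and that every other $\X(T)$-vertex inside $T_i$ is also an $\X(T_i)$-vertex (which hinges on correctly tracking the parity of the ``middle'' component $Q$), is the delicate step that lets the argument collapse to the one-line independent-set inequality.
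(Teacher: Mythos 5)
Your proof is correct, but part (b) takes a genuinely different route from the paper. For part (a) the two arguments essentially coincide (the paper rules out leaves by noting $T-x$ would be connected with an odd number of edges, you rule them out via the even-degree observation $e(T)=\deg_T(x)+\sum_C e(C)$; the adjacency argument is identical). For part (b), the paper's proof is an explicit injection: for each $y_j\in \X\cap V(T_i)$ it picks a neighbour $u_j$ farther from $x$ than $y_j$ (possible since $y_j$ is not a leaf), adds the attachment vertex $u_0$ of $x$ in $T_i$, and uses (a) plus acyclicity to verify that $y_1,\dots,y_\ell,u_0,\dots,u_\ell$ are $2\ell_i+1$ distinct vertices of $T_i$. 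You instead prove two parity facts (the attachment vertex $w_i$ is never in $\X$, and every $\X$-vertex inside $T_i$ has even degree $\ge 2$ within $T_i$), reduce to the statement that an independent set $I$ of degree-$\ge 2$ vertices in a tree $S$ satisfies $2|I|\le |V(S)|-1$, and prove that by double-counting edges incident to $I$ against the forest bound on $S[I\cup N(I)]$. Both are complete; your route trades the paper's careful distinctness verification for a cleaner, reusable counting lemma, at the cost of some extra parity bookkeeping (and the containment $V(T_i)\cap\X(T)\subseteq\X(T_i)$ you establish is not actually needed — only independence and the degree bound enter the final count).
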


\begin{proof}
	For (a),
	if $x$ is a leaf, then $T-x$ is a connected graph with an odd number of edges, so $x\notin \X$. Assume for contradiction that some $x,y\in \X$ are adjacent.  Let $T_x$ be the connected component of $T-y$ containing $x$ and similarly let $T_y$ be the component of $T-x$ containing $y$; see \Cref{fig: xset of tree}.  Since $x,y\in \X$, we have $e(T_x),e(T_y)$ even. However, it is not difficult to see  that every edge of $T-xy$ appears exactly once in either $T_x$ or $T_y$, which implies $e(T)=e(T_x)+e(T_y)+1$ is odd, a contradiction. 
	
	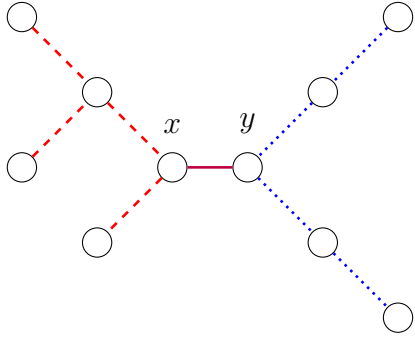
\begin{figure}[h]
		\centering
		\begin{tikzpicture}
			\node[vertexSmall] (x) at (-1,0) {};
			\node[vertexSmall] (y) at (0,0){};
			
			\node[vertexSmall] (a) at (1,1){};
			\node[vertexSmall] (b) at (1,-1){};
			\node[vertexSmall] (c) at (2,2){};
			\node[vertexSmall] (d) at (2,-2){};
			
			\node[vertexSmall] (w) at (-2,1){};
			\node[vertexSmall] (z) at (-2,-1){};
			\node[vertexSmall] (u) at (-3,0){};
			\node[vertexSmall] (v) at (-3,2){};
			
			\node[above = 3pt of x] {$x$};
			\node[above = 3pt of y] {$y$};
			
			\draw[edge,purple] (x) -- (y);
			
			\draw[edge,red,dashed] (u) -- (w) -- (x) -- (z);
			\draw[edge,red,dashed] (v) -- (w);
			
			\draw[edge,blue,dotted] (c) -- (a) -- (y) -- (b);
			\draw[edge,blue,dotted] (b) -- (d);
		\end{tikzpicture}
		\caption{A tree with $x,y\in \X$ adjacent and an odd number of edges. The tree $T_x$ is in red dashes, the tree $T_y$ is in blue dots, and the edge $xy$ is in solid purple.   }
		\label{fig: xset of tree}
	\end{figure}
	
	For (b), fix $i\in [r]$, set $\ell:=\ell_i$, and write $\{y_1,\ldots,y_\ell\}=\X\cap V(T_i)$.  For each $j\in[\ell]$, let $u_j$ be a neighbor of $y_j$ with $\mathrm{dist}_T(u_j,x)>\mathrm{dist}_T(y_j,x)$, which always exists because $y_j$ is not a leaf of $T$ by (a). Let $u_0$ be the unique neighbor of $y_0:=x$ in $T_i$.  Note that $u_j\ne y_{j'}$ for any $j,j'$ since $y_j$ and $y_{j'}$ are not adjacent by (a).  Also, observe that $u_j\ne u_{j'}$ for $j\ne j'$, as otherwise we would have $\mathrm{dist}_T(y_j,x)=\mathrm{dist}_T(y_{j'},x)$ and that $y_j,y_{j'}$ have a common neighbor $u_j=u_{j'}$ not equal to $x$, which would imply that $T$ has a cycle if $y_j\ne y_{j'}$ (via considering $u_j,y_j,y_{j'}$ and the paths from $y_j,y_{j'}$ to $x$).  In total, we conclude that $y_1,\ldots,y_\ell,u_0,u_1,\ldots,u_\ell$ are distinct vertices in $T_i$, proving the second part.
\end{proof}

We are now in position to prove our desired bound on $\nu(T)$.
\begin{prop}
	If $T$ is a tree on $2n+1$ vertices, then $\nu(T)\leq (2n)!$ with equality if and only if $T$ is the star $K_{1,2n}.$
\end{prop}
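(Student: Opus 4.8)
The plan is to mirror the lower-bound argument: induct on $n$, apply \Cref{cor:treeInductiveHypothesis} (valid since $e(T)=2n$ is even) to write $\nu(T)=\sum_{x\in\X}\nu(T-x)$, bound each summand via the inductive hypothesis, and then control the resulting sum using the structural facts about $\X$ from \Cref{lem:ComponentsIntersectingXOfX}. The base case $n=0$ is immediate since $K_1=K_{1,0}$.

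For the inductive step, fix $x\in\X$ and let $T_1,\dots,T_r$ be the components of $T-x$, with $n_i=|V(T_i)|$. By definition of $\X$ each $T_i$ has an even number of edges, so $n_i$ is odd, say $n_i=2k_i+1$ with $k_i\le n-1$; the inductive hypothesis then gives $\nu(T_i)\le (2k_i)!=(n_i-1)!$. Combining this with \eqref{eq:treeDisjoint},
\[\nu(T-x)=\binom{2n}{n_1,\dots,n_r}\prod_{i=1}^r\nu(T_i)\le \frac{(2n)!}{\prod_i n_i!}\prod_i(n_i-1)!=\frac{(2n)!}{\prod_i n_i}.\]
Next I would invoke \Cref{lem:ComponentsIntersectingXOfX}(b): writing $\ell_i$ for the number of vertices of $\X$ lying in $T_i$, we have $n_i\ge 2\ell_i+1$, and since the vertices of $\X$ other than $x$ are partitioned among the $T_i$, we have $\sum_i\ell_i=|\X|-1$. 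The elementary inequality $\prod_i(2\ell_i+1)\ge 2\sum_i\ell_i+1$ (immediate by induction on $r$ from $(2a+1)(2b+1)\ge 2(a+b)+1$) then yields $\prod_i n_i\ge 2|\X|-1$, so $\nu(T-x)\le (2n)!/(2|\X|-1)$. Summing over the $|\X|$ choices of $x\in\X$ and using $|\X|\ge 1$ (which is \Cref{lem:treeNonzero}) gives
\[\nu(T)=\sum_{x\in\X}\nu(T-x)\le \frac{|\X|}{2|\X|-1}\,(2n)!\le (2n)!.\]

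For the equality characterization, note $\frac{|\X|}{2|\X|-1}=1$ forces $|\X|=1$, say $\X=\{x\}$; then $\nu(T)=\nu(T-x)$ must equal $(2n)!$, which by the displayed bound $\nu(T-x)\le (2n)!/\prod_i n_i$ forces $\prod_i n_i=1$, i.e.\ every component of $T-x$ is a single vertex. Thus $x$ is adjacent to all $2n$ other vertices and $T=K_{1,2n}$; conversely, we have already computed $\nu(K_{1,2n})=(2n)!$.

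I expect the crux to be the passage from the per-vertex bound to the bound on the whole sum. A priori $\nu(T)$ is a sum of up to $|\X|$ terms, each potentially close to $(2n)!$, so naively the sum could far exceed $(2n)!$; all the content lies in \Cref{lem:ComponentsIntersectingXOfX}(b), which forces the components of each $T-x$ to be large precisely when $|\X|$ is large, making the $1/(2|\X|-1)$ saving in each summand exactly enough. The remaining ingredients — the inductive bound on $\nu(T_i)$ and the arithmetic inequality $\prod_i(2\ell_i+1)\ge 2\sum_i\ell_i+1$ — are routine.
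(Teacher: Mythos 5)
Your proof is correct and follows essentially the same route as the paper's: induction via \Cref{cor:treeInductiveHypothesis}, the bound $\nu(T-x)\le (2n)!/\prod_i n_i$, and then \Cref{lem:ComponentsIntersectingXOfX}(b) together with the elementary inequality $\prod_i(2\ell_i+1)\ge 2\sum_i\ell_i+1$ to get the $1/(2|\X|-1)$ saving, with the identical equality analysis. Nothing to add.
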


\begin{proof}
	
	We prove the result by induction on $n$, the $n=0$ case being trivial.  Fix $x\in \X$ and let $T_1,\ldots,T_r$ be the connected components of $T-x$. By induction, we have
	\begin{equation}\nu(T-x)={2n\choose n_1,\ldots,n_r} \prod_{i=1}^r \nu(T_i)\le {2n\choose n_1,\ldots,n_r} \prod_{i=1}^r (n_i-1)!=\frac{(2n)!}{\prod_{i=1}^r n_i}.\label{eq:treeMax}\end{equation}
	
	With this and \Cref{lem:ComponentsIntersectingXOfX}, we find 
	\[\nu(T-x)\le \frac{(2n)!}{\prod_{i=1}^r(2\ell_i+1)}\le \frac{(2n)!}{1+\sum_{i=1}^r 2\ell_i}=\frac{(2n)!}{1+2(|\X|-1)},\]
	where the second inequality used repeated applications of the inequality $(\alpha+1)(\beta+1)\ge \alpha+\beta+1$ valid for any $\alpha,\beta\ge 0$, and the equality used that each vertex of $\X-\{x\}$ appears in exactly one $T_i$ subtree.  Using this together with \Cref{cor:treeInductiveHypothesis} gives
	\[\nu(T)=\sum_{x\in \X} \nu(T-x)\le \frac{|\X|}{2|\X|-1} (2n)!\le (2n)!,\]
	proving the desired upper bound.  If $\nu(T)=(2n)!$, then the inequalities above must be equalities, which can only happen if $|\X|=1$, say with $\X=\{x\}$, and if $\nu(T-x)=(2n)!$.  By \eqref{eq:treeMax}, this can only happen if $n_i=1$ for all $i$, which means $T-x$ consists of $2n$ isolated vertices.  This implies $T$ is a star on $2n+1$, completing the proof.
\end{proof}

In total, this proposition together with \Cref{prop:lowerTree} completes the proof of \Cref{thm:etaTree}.

\begin{remark}Our proofs yield slightly stronger bounds on $\nu(T)$ whenever $\X$ is large.   For example, \eqref{eq:oneX} gives the lower bound $\nu(T)\ge |\X| n! 2^n$.  Bounds of this form are known as  \textit{stability results} in extremal graph theory, which roughly are results saying that bounds for a graph $T$ can be substantially improved if $T$ is ``far'' from a unique extremal construction (in our case, $T$ being ``far'' from $H_n,K_{1,2n}$ is measured by having $|\X|$ large). 
\end{remark}

\section{Multiplicity of $-1$}\label{section: multiplicity}
 In this section, we prove Proposition \ref{prop:matching} and Theorems \ref{thm: -1 multiplicity for tournaments} and \ref{thm root multiplicity upper bound} regarding the multiplicity of $-1$ as a root of $\A_{D}(t)$.  We first prove Theorem \ref{thm root multiplicity upper bound} which we restate for convenience.
\begin{theorem}
    If $D$ is an $n$-vertex digraph, then \[\mult(A_D(t),-1)\le n-s_2(n),\] where $s_2(n)$ denotes the number of 1's in the binary expansion of $n$.  Moreover, for all $n$, there exist $n$-vertex digraphs $D$ with $A_D(t)=\frac{n!}{2^{n-s_2(n)}}(1+t)^{n-s_2(n)}$.
\end{theorem}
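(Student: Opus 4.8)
The plan is to treat the two assertions separately: a short divisibility argument for the upper bound, and an explicit recursive construction for sharpness.

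\medskip
\textbf{Upper bound.} The key point is that $A_D(t)$ is a polynomial with non-negative integer coefficients satisfying $A_D(1)=|\mathfrak{S}_D|=n!$. Set $k=\mult(A_D(t),-1)$ and write $A_D(t)=(1+t)^k g(t)$ where $g(-1)\ne 0$. Dividing repeatedly by the monic linear factor $1+t$ (which preserves integrality of coefficients), we get $g\in\Z[t]$, so $g(1)\in\Z$. Evaluating the factorization at $t=1$ gives $n!=2^k g(1)$, hence $2^k\mid n!$ and therefore $k\le v_2(n!)$, the $2$-adic valuation of $n!$. Finally, Legendre's formula gives $v_2(n!)=\sum_{i\ge 1}\lfloor n/2^i\rfloor=n-s_2(n)$, which is exactly the claimed bound. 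This half should be brief; the only thing to be careful about is justifying $g\in\Z[t]$.

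\medskip
\textbf{Sharpness.} I would build digraphs $D_n$ on $n$ vertices by strong induction. Put $D_1=K_1$ (so $A_{D_1}(t)=1$); for even $n=2m$ put $D_n=D_m\circ_v\ora{K_2}$, the rooted product with $v$ either vertex of $\ora{K_2}$; and for odd $n=2m+1$ put $D_n=D_{2m}\sqcup K_1$. Using $A_{\ora{K_2}}(t)=1+t$ and \Cref{prop: rooted product graphs} (with $\binom{2m}{2,\dots,2}=(2m)!/2^m$), the even step gives
\[
A_{D_{2m}}(t)=\frac{1}{m!}\cdot\frac{(2m)!}{2^m}\cdot A_{D_m}(t)\cdot(1+t)^m,
\]
and \Cref{prop basic facts}(c) (with $\binom{2m+1}{2m,1}=2m+1$) gives $A_{D_{2m+1}}(t)=(2m+1)A_{D_{2m}}(t)$. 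Feeding in the inductive hypothesis $A_{D_m}(t)=\frac{m!}{2^{m-s_2(m)}}(1+t)^{m-s_2(m)}$ and using the elementary identities $s_2(2m)=s_2(m)$ and $s_2(2m+1)=s_2(m)+1$ (equivalently $n-s_2(n)=v_2(n!)$) to collect the powers of $2$, one obtains $A_{D_n}(t)=\frac{n!}{2^{\,n-s_2(n)}}(1+t)^{\,n-s_2(n)}$ in both cases. In particular $\mult(A_{D_n}(t),-1)=n-s_2(n)$, so the bound is attained for every $n$.

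\medskip
The main obstacle is purely bookkeeping: ensuring that the powers of $2$ contributed by the multinomial coefficient $(2m)!/2^m$, by $A_{\ora{K_2}}(t)^m$, and by the inductive hypothesis combine to exactly $2^{\,n-s_2(n)}$. This is precisely where the identities $s_2(2m)=s_2(m)$ and $s_2(2m+1)=s_2(m)+1$ do the work, and no genuinely hard step is anticipated.
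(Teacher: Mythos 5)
Your proposal is correct and follows essentially the same route as the paper: the upper bound is the identical divisibility argument ($A_D(1)=n!=2^k g(1)$ with $g\in\Z[t]$, then Legendre's formula), with your repeated-division-by-the-monic-factor justification of $g\in\Z[t]$ serving the same purpose as the paper's appeal to Gauss's lemma. For sharpness the paper also uses the rooted product with the single-arc digraph, differing only organizationally — it builds gadgets $L_m$ on $2^m$ vertices and takes their disjoint union according to the binary expansion of $n$, whereas you recurse via $n=2m$ and $n=2m+1$; both yield the same polynomial by the same two propositions.
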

\begin{proof}
     We first show the upper bound. Let $m$ be the multiplicity of $-1$ as a root of $\A_D(t)$. Observe that $\A_D(1) = n!$. Since there is a polynomial $p(t)$ such that $\A_D(t) = (1 + t)^mp(t)$ and $p(-1) \neq 0$, we know that $\A_D(1) = 2^mp(1) = n!$. Since $\A_D(t)$ has integer coefficients and $(1 + t)^m$ also has integer coefficients, it follows that $p(t)$ has rational coefficients. By Gauss's lemma, it follows that $p(t)$ has integer coefficients. Hence $p(1)$ is an integer. It follows that $2^m$ must divide $n!$, which by Legendre's formula implies $m\le n - s_2(n)$. 

     For the lower bound, we first consider the case when $n$ is a power of two. Let $P_2$ be the graph on vertices $v_1,v_2$ with a single arc $v_1\to v_2$. Define the sequence of digraphs $\{L_m\}_{m\in \N}$ by
     \[
        L_1 = P_2~~~~\text{and}~~~ L_{m + 1} = L_m\circ_{v_1} P_2,
     \] where we recall that this expression for $L_{m+1}$ is the rooted producted mentioned just before \Cref{prop: rooted product graphs}.  We observe that $L_{m}$ has $2^m$ vertices and $2^m - 1$ arcs. By \Cref{prop: rooted product graphs} and induction, we find
     \[
        \A_{L_m}(t) = (2^m)!\left(\frac{1 + t}{2}\right)^{2^m - 1}.
     \] Since $s_2(2^m) = 1$, this gives the desired construction when $n$ is a power of two.

     For an arbitrary $n$, let $a_1,\dots,a_\ell$ be the indices of the nonzero powers of $2$ in the binary expansion of $n$. Consider the digraph $D$ given via the disjoint union of the digraphs $L_{a_1},\dots,L_{a_\ell}$ defined previously. By Proposition \ref{prop basic facts},
     \[
        \A_{D}(t) = \binom{n}{2^{a_1}, \dots, 2^{a_\ell}}\prod_{i = 1}^{\ell} \A_{L_{a_i}}(t)=\binom{n}{2^{a_1}, \dots, 2^{a_\ell}}\prod_{i = 1}^{l} (2^{a_i})!\left(\frac{1 + t}{2}\right)^{2^{a_i} - 1}=\frac{n!}{2^{n-s_2(n)}}(1+t)^{n-s_2(n)},
     \]
     giving the desired result.
\end{proof}

We next establish our general lower bound, which we restate for convenience.
 \begin{prop}
     Let $D$ be an orientation of an $n$-vertex graph $G$.  If every matching in the complement of $G$ has size at most $m$, then  $\mult(\A_D(t),-1) \geq \floor{\frac{n}{2}} - m$.
 \end{prop}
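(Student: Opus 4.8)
Since $\mult(\A_D(t),-1)\ge 0$ always, we may assume $\floor{n/2}-m>0$. Moreover, enlarging $m$ only weakens the conclusion, so it suffices to prove the statement for $m=\mu(\overline G)$, the matching number of the complement $\overline G$; that is, I will prove $\mult(\A_D(t),-1)\ge \floor{n/2}-\mu(\overline G)$ for every orientation $D$ of every $n$-vertex graph $G$, and then note $\floor{n/2}-\mu(\overline G)\ge \floor{n/2}-m$. The argument is an induction on $n$, split according to the parity of $n$, and the whole point of expressing things via $\mu(\overline G)$ (equivalently via the deficiency $n-2\mu(\overline G)$) rather than via a fixed clique is that it behaves cleanly under vertex deletion.

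If $n$ is odd, I apply \Cref{lemma remove a vertex}: $\A_D(t)=\sum_v \tfrac{t^{\deg_D^+(v)}+t^{\deg_D^-(v)}}{2}\,\A_{D-v}(t)$. For each $v$, the complement of $G-v$ is $\overline G-v$, and deleting a vertex cannot raise the matching number, so $\mu(\overline{G-v})\le \mu(\overline G)$; by induction $\A_{D-v}(t)$ is divisible by $(1+t)^{\floor{(n-1)/2}-\mu(\overline G)}$, and since $n$ is odd this exponent equals $\floor{n/2}-\mu(\overline G)$. As the coefficients $\tfrac{t^{\deg^+}+t^{\deg^-}}{2}$ are honest polynomials, every summand — and hence $\A_D(t)$ — is divisible by $(1+t)^{\floor{n/2}-\mu(\overline G)}$.

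If $n$ is even, deleting a single vertex costs one factor of $(1+t)$, so instead I apply \Cref{lemma split into subgraphs} with $k=2$: $\A_D(t)=\sum_{S\in\binom V2}\tfrac{t^{e_D(S,\overline S)}+t^{e_D(\overline S,S)}}{2}\,\A_{D[S]}(t)\,\A_{D-S}(t)$, and it suffices to bound $\mult(\cdot,-1)$ of each summand (the multiplicity of $-1$ in a product of nonzero polynomials is additive, and each factor is nonzero since it is positive at $t=1$). Fix $S=\{a,b\}$; by induction $\A_{D-S}(t)$ is divisible by $(1+t)^{\floor{(n-2)/2}-\mu(\overline{G-S})}=(1+t)^{\floor{n/2}-1-\mu(\overline G-S)}$. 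If $ab\in E(G)$ then $\A_{D[S]}(t)=1+t$ and $\mu(\overline G-S)\le \mu(\overline G)$, so the summand is divisible by $(1+t)^{\floor{n/2}-\mu(\overline G)}$. If $ab\notin E(G)$, then $ab$ is an edge of $\overline G$, and here is the key observation: deleting both endpoints of an edge drops the matching number by at least one, since a matching of $\overline G-S$ of size $\mu(\overline G)$ would avoid $a,b$ and hence extend by $ab$ to a matching of $\overline G$ of size $\mu(\overline G)+1$; thus $\mu(\overline G-S)\le \mu(\overline G)-1$ and $\A_{D-S}(t)$ by itself is already divisible by $(1+t)^{\floor{n/2}-\mu(\overline G)}$. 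In all cases every summand, and therefore $\A_D(t)$, is divisible by $(1+t)^{\floor{n/2}-\mu(\overline G)}$, closing the induction (the base cases $n\le 1$ being trivial).

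The main obstacle is the even case: the straightforward one-vertex recursion via \Cref{lemma remove a vertex} loses a factor of $(1+t)$ and cannot be repaired by parity of vertex degrees alone (there are dense graphs all of whose degrees are even). The fix is to recurse two vertices at a time through \Cref{lemma split into subgraphs}, and the non-edge case is then rescued precisely by the elementary matching fact that removing the two endpoints of an edge of $\overline G$ forces the matching number of $\overline G$ to drop.
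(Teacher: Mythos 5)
Your proof is correct and takes essentially the same route as the paper: the two-vertex decomposition of \Cref{lemma split into subgraphs} with $k=2$, split into the case where $S$ is an edge of $G$ (contributing the factor $1+t$ from $\A_{D[S]}(t)$) and the case where $S$ is an edge of $\overline{G}$ (forcing the matching number of the complement to drop by one). The only difference is cosmetic: you handle odd $n$ separately via the one-vertex recursion of \Cref{lemma remove a vertex}, whereas the paper runs the two-vertex recursion uniformly for both parities from the base cases $n=0,1$.
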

 \begin{proof}
    We prove the result by induction on $n$, the base cases $n=0,1$ being trivial.  Assume that we have proven the result for $n-2$, and let $D$ be an orientation of an $n$-vertex graph $G$ whose complement has a maximum matching of size $m$.  By applying \Cref{lemma split into subgraphs} with $k=2$ we find
    \[
         \A_D(t) = \sum_{S\in \binom{V}{2}} \frac{t^{e_D(S,\overline{S})} + t^{e_D(\overline{S},S)}}{2}\A_{D[S]}(t)\A_{D - S}(t).
     \]
    We claim that the polynomials $\A_{D[S]}(t)\A_{D - S}(t)$ in the sum above all have $-1$ as a root with multiplicity at least $\floor{\frac{n}{2}} - m$, from which the result will follow. 

    First consider the case that $S$ is not an edge of $G$, which means it is an edge in the complement $\overline{G}$.  This implies that every maximal matching of $\overline{G}$ must use at least one vertex of $S$ (as otherwise we could include the edge $S$ into the matching), hence $\overline{G}-S$ is an $n-2$ vertex graph with maximum matching of size at most $m-1$.  Inductively this implies $\A_{D - S}(t)$ has $-1$ as a root with multiplicity at least $\floor{\frac{n-2}{2}} - (m-1)=\floor{\frac{n}{2}} - m$, giving the desired result.
    
    Next consider the case that $S$ is an edge in $G$.  Observe that $\overline{G}-S$ is an $n-2$ vertex graph which continues to have no matching of size larger than $m$, so inductively $A_{D-S}(t)$ has $-1$ as a root with multiplicity at least $\floor{\frac{n-2}{2}} - m=\floor{\frac{n}{2}} - m-1$.  Also note that since $S$ is an edge, $A_{D[S]}(t)=1+t$, so combining these two terms gives the desired multiplicity.  This completes the proof of the claim, proving the result.
 \end{proof}
In particular this result implies $\mult(A_D(t),-1)\ge \floor{\frac{n}{2}}$ for tournaments $D$, but proving this holds with equality requires a refinement of \Cref{lemma split into subgraphs} which requires some additional notation.

  Let $OP(\alpha)$ denote the set of all ordered set patitions of type $\alpha$, and let $SP(\lambda)$ denote the set of all unordered set partitions with type $\lambda$.  For a digraph $D$ and an ordered set partition $P$ of the vertices of $D$ of length $k$ and $i \in [k]$, define the \textit{$i$-th forward sequence number} of $P$  to be \[FS_{D,P}(i) = \sum_{j = i + 1}^{k} e_D(P_i,P_j)\] and the \textit{$i$-th reverse sequence number} of $P$ to be \[RS_{D,P}(i) = \sum_{j = i + 1}^{k} e_D(P_j,P_i)\] where we set $FS_{D,P}(k) = 0$ and $RS_{D,P}(1) = 0$. Note that $FS_{D,P}(i)=e_D(P_i,P_{i+1}\cup \cdots P_k)$ and $RS_{D,P}(i)=e_D(P_{i+1}\cup \cdots P_k,P_i)$. 

  With this notation in hand, we have the following corollary of Lemma \ref{lemma split into subgraphs}.
  \begin{lemma}\label{lemma ordered set partition expansion}
    If $D$ is a digraph on the vertex set $[n]$ and $\alpha$ is an integer composition of $n$ of length $k$, then
    \[
        \A_D(t) = \frac{1}{2^{k}}\sum_{P\in OP(\alpha)} \prod_{i = 1}^{k} \left(\A_{D[P_i]}(t) \left(t^{FS_{D,P}(i)} + t^{RS_{D,P}(i)}\right)\right) 
    \]
 \end{lemma}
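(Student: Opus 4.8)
The plan is to obtain the identity by induction on the length $k$ of the composition $\alpha$, using \Cref{lemma split into subgraphs} at each stage to peel off the first block; here and below I use freely that the lemmas of this section are invariant under relabelling the vertex set, so they apply to a digraph on any finite ground set. For the base case $k=1$ we have $\alpha=(n)$, so $OP(\alpha)$ consists of the single partition $P=([n])$, for which $FS_{D,P}(1)$ and $RS_{D,P}(1)$ are empty sums equal to $0$; the claimed right-hand side then collapses to $\tfrac{1}{2}\A_{D[[n]]}(t)(t^0+t^0)=\A_D(t)$.

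For the inductive step, assume the identity for all digraphs and all compositions of length $k-1$, and let $\alpha=(\alpha_1,\dots,\alpha_k)\vdash n$ with $k\ge 2$. First I would apply \Cref{lemma split into subgraphs} with parameter $\alpha_1$ to write
\[
\A_D(t)=\sum_{S\in\binom{V}{\alpha_1}}\frac{t^{e_D(S,\overline{S})}+t^{e_D(\overline{S},S)}}{2}\,\A_{D[S]}(t)\,\A_{D-S}(t),
\]
and then, for each fixed $S$, expand $\A_{D-S}(t)$ by the inductive hypothesis applied to the digraph $D-S$ on $n-\alpha_1$ vertices with the composition $(\alpha_2,\dots,\alpha_k)\vdash n-\alpha_1$ of length $k-1$. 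The resulting double sum is indexed by pairs $(S,Q)$ with $S\in\binom{V}{\alpha_1}$ and $Q=(Q_1,\dots,Q_{k-1})$ an ordered set partition of $V\setminus S$ of type $(\alpha_2,\dots,\alpha_k)$, and $(S,Q)\mapsto P:=(S,Q_1,\dots,Q_{k-1})$ is a bijection onto $OP(\alpha)$, under which $P_1=S$ and $P_{i+1}=Q_i$ for $1\le i\le k-1$.

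It then remains to match the summands under this bijection. One checks that $e_D(S,\overline{S})=FS_{D,P}(1)$ and $e_D(\overline{S},S)=RS_{D,P}(1)$ since $\overline{S}=P_2\cup\cdots\cup P_k$, that $\A_{D[S]}(t)=\A_{D[P_1]}(t)$ and $\A_{(D-S)[Q_i]}(t)=\A_{D[P_{i+1}]}(t)$, and --- the one point deserving a moment's care --- that deleting the vertices of $S$ removes no arc lying between two distinct blocks of $Q$, so $e_{D-S}(Q_i,Q_j)=e_D(P_{i+1},P_{j+1})$ for all $i<j$ and hence $FS_{D-S,Q}(i)=FS_{D,P}(i+1)$ and $RS_{D-S,Q}(i)=RS_{D,P}(i+1)$. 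Substituting these identities, reindexing the inner product by $i\mapsto i-1$ so that it runs over $i=2,\dots,k$, and combining the explicit factor $\tfrac{1}{2}$ with the $\tfrac{1}{2^{k-1}}$ coming from the inductive hypothesis yields
\[
\A_D(t)=\frac{1}{2^{k}}\sum_{P\in OP(\alpha)}\prod_{i=1}^{k}\big(\A_{D[P_i]}(t)(t^{FS_{D,P}(i)}+t^{RS_{D,P}(i)})\big),
\]
completing the induction. I do not expect a genuine obstacle: the whole argument is a clean iteration of \Cref{lemma split into subgraphs}, and the only thing to watch is the index shift relating the forward and reverse sequence numbers of $Q$ inside $D-S$ to those of $P$ inside $D$, which is harmless because no arc between surviving blocks is lost when $S$ is deleted.
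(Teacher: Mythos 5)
Your proof is correct and follows essentially the same route as the paper's: induction on the number of parts, peeling off the first block of size $\alpha_1$ via \Cref{lemma split into subgraphs}, and applying the inductive hypothesis to $D-S$ with the truncated composition. The index-shift bookkeeping you flag (that $FS_{D-S,Q}(i)=FS_{D,P}(i+1)$, etc.) is exactly the observation the paper makes, so there is nothing further to add.
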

 \begin{proof}
     We induct on the number of parts in $\alpha$. If $\alpha$ has one part, then the only partition $P$ is the entire vertex set. Hence, $FS_{D,P}(i) = 0 = RS_{D,P}(i)$ for every $i\in [k]$ and the result follows.

     Assume the claim holds for any integer composition with $k - 1$ parts, and consider an integer partition $\alpha$ with $k$ parts. Let $\alpha_1$ be the first part of $\alpha$, and let $\alpha'$ be the integer composition given by removing the first part of $\alpha$. By Lemma \ref{lemma split into subgraphs}, 
     \begin{equation}
        \A_{D}(t) = \sum_{S\in\binom{[n]}{\alpha_1}} \frac{t^{e_D(S,\overline{S})} + t^{e_D(\overline{S},S)}}{2}\A_{D[S]}(t)\A_{D - S}(t).\label{eq:partition1}
     \end{equation} By our inductive hypothesis, for each $S\in \binom{[n]}{\alpha_1}$ 
      \begin{equation}\label{eq:partition2}
        \A_{D - S}(t) = \frac{1}{2^{k - 1}}\sum_{P\in OP(\alpha')} \prod_{i = 2}^{k} \left(\A_{(D - S)[P_i]}(t) \left(t^{FS_{D - S,P}(i)} + t^{RS_{D - S,P}(i)}\right)\right) 
    \end{equation} Observe that for $i\in \{2,\ldots,k\}$ we have
    \[
        FS_{D,P}(i) = FS_{D - S,P}(i) ~~\text{and}~~RS_{D,P}(i) = RS_{D - S,P}(i)
    \] and that
    \[
        FS_{D,P}(1) = e_D(S,\overline{S})~~\text{and}~~RS_{D,P}(1) = e_D(\overline{S},S).
    \] It follows from \eqref{eq:partition1} and \eqref{eq:partition2} that
    \[
        \A_D(t) = \frac{1}{2^{k}}\sum_{P\in OP(\alpha)} \prod_{i = 1}^{k} \left(\A_{D[P_i]}(t) \left(t^{FS_{D,P}(i)} + t^{RS_{D,P}(i)}\right)\right) 
    \] as desired.
 \end{proof}

With the lemma, we can now prove Theorem \ref{thm: -1 multiplicity for tournaments}. We restate the theorem for convenience.
\begin{theorem}\label{theorem -1 multiplcity for even tournaments}
     If $D$ is a tournament on $n$ vertices, then $\mult(\A_D(t),-1) = \lfloor \frac{n}{2}\rfloor$.
\end{theorem}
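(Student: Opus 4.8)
The plan is to apply \Cref{lemma ordered set partition expansion} with a carefully chosen composition $\alpha$: take $\alpha$ to have $m := \floor{n/2}$ parts equal to $2$, followed by one part equal to $1$ when $n$ is odd. Since $D$ is a tournament, every $2$-element induced subdigraph is a single arc, so $A_{D[P_i]}(t) = 1+t$ for each size-$2$ part $P_i$, while the trailing size-$1$ part (when it exists) has no edges and sits last, so it contributes only the constant $1\cdot(t^{0}+t^{0}) = 2$. Substituting these into the lemma and collecting powers of $2$, I expect the identity
\[
A_D(t) \;=\; (1+t)^{m}\cdot R(t),\qquad R(t)\;:=\;\frac{1}{2^{m}}\sum_{P\in OP(\alpha)}\ \prod_{i\,:\,|P_i|=2}\bigl(t^{FS_{D,P}(i)}+t^{RS_{D,P}(i)}\bigr),
\]
where $R(t)$ is a genuine polynomial. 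Since \Cref{prop:matching} already gives $\mult(A_D(t),-1)\ge\floor{n/2}$ (the complement of $K_n$ is edgeless), it then suffices to show $R(-1)\neq 0$, which pins down $\mult(A_D(t),-1)=m$ exactly.

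To evaluate $R(-1)$ I would first use the tournament structure: for any $P$ and any size-$2$ part $P_i$, the quantity $FS_{D,P}(i)+RS_{D,P}(i)$ equals $2\,|P_{i+1}\cup\cdots\cup P_k|$ (each later vertex sends exactly one arc to or from each of the two vertices of $P_i$), hence is even, so $FS_{D,P}(i)\equiv RS_{D,P}(i)\pmod 2$ and $(-1)^{FS_{D,P}(i)}+(-1)^{RS_{D,P}(i)}=2(-1)^{FS_{D,P}(i)}$. This collapses the formula to
\[
R(-1)\;=\;\sum_{P\in OP(\alpha)}(-1)^{f_D(P)},\qquad f_D(P):=\sum_{i\,:\,|P_i|=2}FS_{D,P}(i),
\]
and $f_D(P)$ is exactly the number of arcs $u\to v$ of $D$ whose tail lies in an earlier block of $P$ than its head.

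The crucial step — and the one I expect to be the main obstacle to write cleanly — is to show that $f_D(P)\bmod 2$ depends only on the underlying unordered partition of $P$, not on the order of its blocks. The idea: transposing two adjacent size-$2$ blocks flips the ``earlier/later'' status of exactly the four arcs running between those two blocks, so $f_D$ changes by an even number; and such adjacent transpositions generate all reorderings of the size-$2$ blocks (the trailing singleton, pinned last by $\alpha$, never participates). Denoting this common parity by $g_D(\Pi)$ and noting that each unordered partition $\Pi$ (into $m$ pairs, plus one singleton when $n$ is odd) underlies exactly $m!$ members of $OP(\alpha)$, I get
\[
R(-1)\;=\;m!\sum_{\Pi}(-1)^{g_D(\Pi)}.
\]
Finally, a parity argument closes it out: each term $(-1)^{g_D(\Pi)}$ is $\pm1$, hence $\equiv 1\pmod 2$, so $\sum_\Pi(-1)^{g_D(\Pi)}$ is congruent mod $2$ to the number of such partitions, which is $(2m-1)!!$ when $n=2m$ and $(2m+1)!!$ when $n=2m+1$ — a product of odd integers, hence odd. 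Thus $\sum_\Pi(-1)^{g_D(\Pi)}$ is a nonzero integer and $R(-1)=m!\cdot(\text{odd})\neq 0$, giving $\mult(A_D(t),-1)=\floor{n/2}$. Besides the block-reordering invariance, the only points needing care are the exact power-of-$2$ bookkeeping in the first displayed identity and the parallel (but essentially identical) treatment of the even and odd cases.
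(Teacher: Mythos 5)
Your proposal is correct and follows essentially the same route as the paper: the same ordered-set-partition expansion with composition $(2)^{\floor{n/2}}$ (plus a trailing singleton for odd $n$), the same parity observation that $FS+RS$ is even for a tournament, the same invariance of $FS\bmod 2$ under adjacent block transpositions, and the same final count via the odd number $(2m-1)!!$ (resp.\ $(2m+1)!!$) of underlying unordered partitions, with the lower bound supplied by \Cref{prop:matching}.
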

\begin{proof}
    We first consider the case when $n = 2k$ for some $k\in \N$. By Proposition \ref{prop:matching}, $\mult(\A_D(t),-1) \geq n/2$. By Lemma \ref{lemma ordered set partition expansion} applied to the partition $(2)^k$, we have
    \[
        \A_D(t) = \left(1 + t\right)^k\frac{1}{2^{k}}\sum_{P\in OP((2)^k)} t^{FS_{T,P}(i)} + t^{RS_{T,P}(i)},
    \] where here we used that $A_{D[P_i]}(t)=1+t$ for all sets $P_i$ of size 2 since $D$ is a tournament.  Let \[p(t) = \frac{1}{2^{k}}\sum_{P\in OP((2)^k)}t^{FS_{T,P}(i)} + t^{RS_{T,P}(i)}.\] We claim that $p(-1) \neq 0$, from which the bound $\mult(A_D(t),-1)\le n/2$ will follow from the inequality above.
    
    We first observe that $FS_{T,P}(i)+ RS_{T,P}(i)$ is always even, as both vertices in $P_i$ are adjacent to every vertex in $P_{i + 1}\cup \dots \cup P_{k}$. Thus for every $P\in OP((2)^k)$ and every $i\in [k]$, \[(-1)^{FS_{T,P}(i)}+ (-1)^{ RS_{T,P}(i)}  = 2(-1)^{FS_{T,P}(i)}.\]  It follows that 
    \[
        p(-1) = \frac{1}{2^{k}}\sum_{P\in OP((2)^k)} \prod_{i = 1}^{k}(2(-1)^{FS_{T,P}(i)}) = \sum_{P\in OP((2)^k)} (-1)^{FS_{T,P}} = \sum_{P\in SP((2)^k)} \sum_{\sigma\in \mathfrak{S}_k} (-1)^{FS_{T,\sigma P}}.
    \] where $\sigma P$ denotes the ordered set partition $(P_{\sigma(1)},\dots,P_{\sigma(k)})$. We first establish the following claim:
    \begin{claim}\label{eq: fs mod 2 is same for all rearrangements of P}
        For all $\sigma\in \mathfrak{S}_k$,
        \begin{equation}
    FS_{T,\sigma P}\equiv_2 FS_{T,P}~~\text{and}~~ RS_{T,\sigma P}\equiv_2 RS_{T,P}.\end{equation}
    \end{claim}
    \begin{proof}
         It suffices to consider the cases where
        \[P=(P_1,\dots,P_{a-1},P_a,P_{a+1},P_{a+2},\dots,P_k)\qquad P'=(P_1,\dots,P_{a-1},P_{a+1},P_{a},P_{a+2},\dots,P_k)\]
        for some $a\in [k-1]$. We then have 
        \begin{align*}
            {FS}_{D,P} -  {FS}_{D,P'} &= \sum_{i = 1}^{k} {FS}_{D,P}(i) - {FS}_{D,P'}(i)\\
            &= \sum_{i = 1}^{k} \sum_{j = i + 1}^{k} ({e}_D(P_i,P_j) - {e}_D(P_i',P_j'))\\
            &= ({e}_D(P_a,P_{a + 1}) - {e}_D(P_a',P_{a + 1}'))\\
            &= {e}_D(P_a,P_{a + 1}) - {e}_D(P_{a + 1},P_{a})\\
            &\equiv_2 0
        \end{align*} since ${e}_D(P_a,P_{a + 1}) + {e}_D(P_{a + 1},P_{a})$ is the number of (undirected) edges from $P_a$ to $P_{a + 1}\cup \dots \cup P_k$, which is always even. The result for $RS_{T,\sigma P}$ follows by an identical argument.
    \end{proof}

    With claim \eqref{eq: fs mod 2 is same for all rearrangements of P} and the fact that $|OP(2^k)|=k!|SP(2^k)|$, we can conclude that 
    \[
        p(-1)= k!\sum_{P\in SP(2^k)}(-1)^{FS_{T,P}}.
    \]
    Each $P\in SP((2)^k)$ is a perfect matching on $[2k]$, and there are $(2k - 1)!!$ such perfect matchings. Since $(2k - 1)!!$ is odd, it follows that $p(-1) \neq 0$ as desired. 
    
    Now when $n = 2k + 1$, we can apply the same reasoning as in the above proof to the integer composition $(2^k,1)$. The conclusion follows from the fact that there are $(2k - 1)!!\cdot (2k + 1)$ maximum matchings in $K_{2k + 1}$.
\end{proof}

\section{Concluding Remarks and Open Problems}\label{sec conclusion}
In this paper we studied a notion of Eulerian polynomials $\A_D(t)$ for digraphs $D$ and proved a number of results related to evaluations at $t=-1$.  We conclude the paper by listing a number of remaining open problems themed around interpreting $\nu(G)$ and multiplicities of $-1$ as a root of $\A_D(t)$.

\textbf{Interpretations for $\nu(G)$}.  Recall that for any graph $G$ we define $\nu(G)=|\A_D(-1)|$ where $D$ is any orientation of $G$.  While \Cref{thm:nuEta} provides a combinatorial interpretation for $\nu(G)$ when $G$ is bipartite, complete bipartite, or a blowup of a cycle, we are still far from understanding this quantity for general graphs, which we leave as the main open problem for this paper.

\begin{question}\label{quest:combInterp}
    Can one give a combinatorial interpretation for $\nu(G)$ for arbitrary graphs $G$?
\end{question}
In view of \Cref{thm:nuEta} and the bound $\nu(G)\le \eta(G)$ from \Cref{prop:nuGeneral}(a), we suspect that in general $\nu(G)$ should count even sequences of $G$ with some special properties, but what these properties should be remains a mystery.

To answer \Cref{quest:combInterp}, it might be useful to establish which graphs $G$ satisfy $\nu(G)=\sum_v \nu(G-v)$, as recurrences of this form were a key step in proving \Cref{thm:nuEta}.   In particular, computational evidence suggests that the following could hold, where here we recall that a graph is \textit{Eulerian} if all of its degrees are even.
\begin{conjecture}\label{conj:Eulerian}
    If $G$ is an Eulerian graph, then $\nu(G)=\sum_v \nu(G-v)$.
\end{conjecture}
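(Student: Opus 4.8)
The plan is to start from the vertex-deletion identity of \Cref{lemma remove a vertex} and exploit the Eulerian hypothesis.  Fix any orientation $D$ of $G$.  Since $G$ is Eulerian, $\deg_D^+(v)+\deg_D^-(v)=\deg_G(v)$ is even for every $v$, so $(-1)^{\deg_D^+(v)}=(-1)^{\deg_D^-(v)}$, and evaluating \Cref{lemma remove a vertex} at $t=-1$ gives
\[\A_D(-1)=\sum_{v\in V}(-1)^{\deg_D^+(v)}\A_{D-v}(-1).\]
(The same computation shows that a vertex of odd degree always contributes $0$, which is why the Eulerian case is the clean one.)  If $e(G)$ is odd then, since every $\deg_G(v)$ is even, each $G-v$ also has an odd number of edges, so both sides of the conjectured identity vanish by \Cref{prop odd edges to evaluation at -1}; hence we may assume $e(G)$ is even.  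Moreover \Cref{prop basic facts}(c) together with the multinomial Vandermonde identity lets us reduce to connected $G$.  Taking absolute values in the display recovers the known inequality $\nu(G)\le\sum_v\nu(G-v)$ of \Cref{cor:nuInequality}, so the entire content of the conjecture is the reverse inequality, which by the triangle inequality will follow as soon as we exhibit an orientation $D$ of $G$ for which the nonzero summands $(-1)^{\deg_D^+(v)}\A_{D-v}(-1)=\pm\nu(G-v)$ all carry the same sign.

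To produce such a $D$ I would imitate the mechanism behind \Cref{thm:nuEta}.  Introduce a class $\mathcal C$ of digraphs that is closed under vertex deletion, contains a distinguished (``canonical'') orientation of every connected Eulerian graph, and carries a sign function $\epsilon\colon\mathcal C\to\{+1,-1\}$; then prove by induction on the number of vertices the two assertions (i) $\epsilon(H)\,\A_H(-1)\ge 0$ for every canonical $H\in\mathcal C$, and (ii) $(-1)^{\deg_H^+(v)}=\epsilon(H)\,\epsilon(H-v)$ whenever $\A_{H-v}(-1)\neq 0$.  Granting (i) and (ii), every nonzero summand in the display equals $\epsilon(D)\,\nu(G-v)$, so the identity collapses to $\A_D(-1)=\epsilon(D)\sum_v\nu(G-v)$ and the conjecture follows.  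This is precisely the shape of the proofs of the three cases of \Cref{thm:nuEta}: there $\mathcal C$ was, respectively, bipartitely oriented graphs, graphs oriented consistently with an ordered partition, and directed blowups of cycles, while $\epsilon$ was $+1$, $+1$, and $(-1)^{m(D)/2}$, with assertion (i) proved by a direct induction as in \Cref{lem:cycleOrientation}.

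The main obstacle is the construction of the pair $(\mathcal C,\epsilon)$.  The class cannot consist of Eulerian orientations alone, because deleting a single vertex from a balanced Eulerian orientation destroys Eulerianness: each neighbour of the deleted vertex acquires degree imbalance $\pm 1$.  A natural guess is therefore to take $\mathcal C$ to be the orientations in which the imbalance $\deg^+-\deg^-$ lies in $\{-1,0,1\}$ at every vertex (a bounded-imbalance condition stable under one further deletion), to let the canonical orientation be a balanced Eulerian orientation, and to define $\epsilon$ from the parity of a suitable invariant of the set of imbalanced vertices — the analogue of the quantity $m(D)$ from \Cref{lem:cycleOrientation}.  Identifying the correct invariant and verifying the multiplicativity forced by (ii) is where the difficulty lies: in the three solved cases this invariant was dictated by very rigid structure, and for a general Eulerian graph it is not at all clear what should replace it.

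An alternative, more combinatorial attack avoids orientations entirely.  Since $\eta$ already satisfies $\eta(G)=\sum_v\eta(G-v)$ by \Cref{lem:etaRecurrence} and $\nu(G)\le\eta(G)$ by \Cref{prop:nuGeneral}(a), one could try to realize $\nu(G)$ as the number of even sequences of $G$ that survive a sign-reversing involution defined on the ``defect'' even sequences, and then check that this involution commutes with deletion of the last vertex of a sequence.  Proving such compatibility in general appears at least as hard as the orientation approach, but it may be more transparent for highly structured Eulerian graphs — Cayley graphs, circulants, or line graphs of Eulerian graphs — and could at least yield partial progress toward \Cref{quest:combInterp}.
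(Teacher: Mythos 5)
There is a genuine gap here: what you have written is a strategy outline, not a proof. The statement is stated in the paper as \Cref{conj:Eulerian} precisely because it is open; the paper offers no proof, only the remark that the Eulerian-tour orientation is a plausible candidate for a ``natural'' orientation. Your preliminary reductions are all correct and match what the paper's machinery gives for free: the identity $\A_D(-1)=\sum_v(-1)^{\deg_D^+(v)}\A_{D-v}(-1)$ follows from \Cref{lemma remove a vertex} since every vertex has even total degree, the odd-edge case is vacuous, and taking absolute values recovers $\nu(G)\le\sum_v\nu(G-v)$, which is already \Cref{cor:nuInequality}. You also correctly identify that the entire content of the conjecture is the reverse inequality, i.e.\ a consistent-sign statement, and that the template is the pair of assertions (i) and (ii) underlying the three cases of \Cref{thm:nuEta}.

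But the core of the argument --- producing a deletion-closed class $\mathcal{C}$ with a sign function $\epsilon$ satisfying (i) and (ii) --- is exactly what is missing, and you acknowledge as much. The bounded-imbalance class you propose is only a guess: you do not verify that it is closed under vertex deletion beyond one step (deleting a second vertex can push imbalances to $\pm 2$), you do not define the invariant playing the role of $m(D)$ from \Cref{lem:cycleOrientation}, and you do not check the sign-multiplicativity (ii), which in the three solved cases was forced by very rigid structure (bipartitions, ordered parts, cyclic parts) that a general Eulerian graph simply does not have. Without those verifications nothing beyond \Cref{cor:nuInequality} has been established. The proposal is a reasonable research plan and correctly diagnoses where the difficulty lies, but it does not prove the conjecture, and as far as the paper is concerned the conjecture remains open.
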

We note that Eulerian graphs have a ``natural'' orientation via orienting each edge according to an Eulerian tour.  Given that e.g.\ our proof of \Cref{cor:nuBipartite} relied on ``natural'' orientations of bipartite graphs, it is plausible that this natural orientation for Eulerian graphs could be used to prove \Cref{conj:Eulerian}.

Our proof of \Cref{thm:nuEta} is non-combinatorial, and it would be interesting to have a more direct combinatorial proof of this fact, say for bipartite graphs.

\begin{prob}
    For any bipartite graph $G=([n],E)$ and orientation $D$ of $G$, construct an explicit involution $\phi:\mathfrak{S}_n\to \mathfrak{S}_n$ such that
    \begin{enumerate}
        \item[(a)] The set of fixed points $\mathcal{F}_\phi$ of $\phi$ is the set of (inverses of) even sequences of $G$, and
        \item[(b)] $(-1)^{\des_D(\sig)}=-(-1)^{\des_D(\phi(\sig))}$ for all $\sig\notin \mathcal{F}_\phi$.
    \end{enumerate}
\end{prob}

Such an involution is known to exist when $G=P_n$ (i.e.\ when inverses of even sequences are exactly alternating permutations), but this involution is somewhat complex; see \cite[Exercise 135]{stanley_2011} for more.

\textbf{Multiplicity of Roots}.  In \Cref{thm: -1 multiplicity for tournaments} we showed every $n$ vertex tournament $D$ has $-1$ as a root of $\A_D(t)$ with multiplicity exactly $\floor{\frac{n}{2}}$. A natural generalization of this result would be the following. 
\begin{conjecture}\label{conj:multipartite}
    If $D$ is the orientation of a complete multipartite graph which has $r$ parts of odd size, then $\mult(\A_D(t),-1)=\floor{\frac{r}{2}}$.
\end{conjecture}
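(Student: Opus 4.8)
The lower bound is immediate. The complement of $K_{n_1,\dots,n_k}$ is the disjoint union of cliques $K_{n_1}\sqcup\cdots\sqcup K_{n_k}$, whose largest matching has size $\sum_i\floor{n_i/2}=\tfrac{n-r}{2}$ where $n=\sum_i n_i$; since $n\equiv r\pmod 2$, \Cref{prop:matching} gives $\mult(\A_D(t),-1)\ge\floor{n/2}-\tfrac{n-r}{2}=\floor{r/2}$. As a consistency check, the ``natural'' orientation $D_0$ (all arcs from $V_i$ to $V_j$ for $i<j$) achieves equality: by \Cref{prop basic facts}(d), $\A_{D_0}(t)=\big(\prod_i n_i!\big)\,\qbinom{n}{n_1,\dots,n_k}{t}$, and since $1+t$ divides $[j]_t=(1-t^j)/(1-t)$ exactly once for $j$ even and never for $j$ odd, one gets $\mult\big(\qbinom{n}{n_1,\dots,n_k}{t},-1\big)=\floor{n/2}-\sum_i\floor{n_i/2}=\floor{r/2}$ (compare \Cref{cor:multinomial}). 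Thus the whole content of the conjecture is the upper bound $\mult(\A_D(t),-1)\le\floor{r/2}$ for an \emph{arbitrary} orientation.

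For this I would adapt the proof of \Cref{thm: -1 multiplicity for tournaments}. Apply \Cref{lemma ordered set partition expansion} with the composition $\alpha=(2,\dots,2)$ when $n$ is even and $\alpha=(2,\dots,2,1)$ when $n$ is odd. In a block $P_i$ of size two, call $P_i$ a \emph{cross pair} if its two vertices lie in different parts (so $\A_{D[P_i]}(t)=1+t$) and a \emph{flat pair} otherwise (so $\A_{D[P_i]}(t)=2$). For a flat pair, $FS_{D,P}(i)+RS_{D,P}(i)$ equals twice the number of later vertices outside its part, hence is even, so the factor $t^{FS_{D,P}(i)}+t^{RS_{D,P}(i)}$ is not divisible by $1+t$ and is nonzero at $t=-1$. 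Thus the only source of $(1+t)$--factors in a given term is its cross pairs (together with the parities of $FS+RS$ on its cross pairs and on the singleton). Since each of the $r$ odd parts must contain an odd, hence positive, number of cross--pair endpoints, every term has at least $\floor{r/2}$ cross pairs; call $P$ \emph{leading} if it has exactly $\floor{r/2}$. A short counting argument then shows that a leading $P$ uses each odd part in exactly one cross--pair endpoint (placing the singleton, when present, in an odd part), so its cross pairs pair up $2\floor{r/2}$ of the odd parts into \emph{pairwise disjoint} two--element sets of parts.

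The key structural point is that in a leading $P$ any two blocks span an even number of undirected edges: two flat pairs, a flat pair and a cross pair, two cross pairs on disjoint pairs of parts, or a singleton together with any pair each contribute $0$, $2$, or $4$ edges. Hence for a leading $P$: (i) $FS_{D,P}(i)+RS_{D,P}(i)$ is even for \emph{every} block, so the term of $P$ has $(1+t)$--valuation exactly $\floor{r/2}$; and (ii) permuting the blocks of $P$ again yields a leading partition and, by the adjacent--transposition argument in the proof of \Cref{thm: -1 multiplicity for tournaments}, does not change the parity of $FS_{D,P}:=\sum_i FS_{D,P}(i)$. Dividing $\A_D(t)$ by $(1+t)^{\floor{r/2}}$ and setting $t=-1$, the non--leading terms vanish and, exactly as in the tournament case, the leading ones collapse to $2^{\,\floor{n/2}-\floor{r/2}}\cdot\lceil n/2\rceil!\cdot\sum_{\bar P}(-1)^{FS_{D,\bar P}}$, with $\bar P$ ranging over the unordered leading structures. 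One then checks that this number of structures is a product of odd numbers --- for $r$ even it equals $\big(\prod_{n_i\text{ odd}}n_i\big)(r-1)!!\prod_{n_i\text{ odd}}(n_i-2)!!\prod_{n_l\text{ even}}(n_l-1)!!$, with an analogous odd count when $r$ is odd --- so $\sum_{\bar P}(-1)^{FS_{D,\bar P}}$ is a sum of an odd number of $\pm1$'s, hence nonzero. This yields $\mult(\A_D(t),-1)\le\floor{r/2}$, completing the proof.

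The main obstacle is step (ii): one must show that the leading ordered set partitions form a union of $\mathfrak{S}_{\lceil n/2\rceil}$--orbits on which $FS_{D,\cdot}\bmod 2$ is constant. This is where the multipartite case genuinely departs from \Cref{thm: -1 multiplicity for tournaments}: there every pair is an edge, every two blocks span four edges, and $FS+RS$ is automatically even for every block and every ordering, whereas here flat pairs contribute the constant $2$ rather than $1+t$, two cross pairs sharing a part can span an odd number of edges, and the parity of $FS+RS$ on a cross pair is in general order--dependent. Ruling out all of these pathologies is precisely what the ``disjoint pairs of parts'' structure of the cross pairs of a leading partition provides, and making that bookkeeping airtight --- together with confirming the parity of the structure count in the $r$ odd case --- is the crux of the argument.
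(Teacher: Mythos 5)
This statement is left as an \emph{open conjecture} in the paper: the authors only observe that the lower bound $\mult(\A_D(t),-1)\ge\floor{r/2}$ follows from \Cref{prop:matching}, so there is no proof of theirs to compare against. Your proposal is an attempt at new mathematics, and after checking each step I believe it is essentially correct.

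The skeleton holds up. In the expansion of \Cref{lemma ordered set partition expansion} with $\alpha=(2,\dots,2)$ or $(2,\dots,2,1)$, each cross pair contributes a factor $1+t$, and the parity constraint $c_i\equiv n_i\pmod 2$ (shifted by one if the singleton lies in $V_i$) on the number $c_i$ of cross endpoints in $V_i$ forces at least $\floor{r/2}$ cross pairs in every term; equality occurs exactly when every odd part carries one cross endpoint and every even part none, except that for $n$ odd the singleton must sit in an odd part carrying no cross endpoints (if it sat in an even part one would get $\sum_i c_i\ge r+1$) --- you should make this last forcing explicit, since it is precisely what excludes the odd singleton/cross-pair edge count. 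Your key structural claim, that any two blocks of a leading partition span an even number of undirected edges, checks out in all cases (cross/cross on disjoint part-pairs, cross/flat, flat/flat, singleton/anything), and it delivers both (i) exact $(1+t)$-valuation $\floor{r/2}$ for leading terms and (ii) invariance of $FS\bmod 2$ under permuting the $2$-blocks, because ``leading'' depends only on the block multiset, so every intermediate partition in a chain of adjacent transpositions still has the pairwise-even property. All leading terms carry the same power of $2$, so the quotient at $t=-1$ reduces to $\floor{n/2}!$ times a sum of $\pm1$'s over the unordered leading structures, and your counts of those structures are odd in both parities of $r$ (a product of odd factors when $r$ is even; $r$ equal odd summands when $r$ is odd), hence the sum is a nonzero odd integer. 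Two minor slips: the orbit size is $\floor{n/2}!$, not $\lceil n/2\rceil!$ (irrelevant to nonvanishing), and the specialization to $r=n$ recovers the paper's \Cref{thm: -1 multiplicity for tournaments} exactly, which is a good consistency check. Written out carefully, this would settle \Cref{conj:multipartite} affirmatively; you should consider communicating it to the authors.
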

Observe that the bound $\mult(A_D(t),-1)\ge \floor{\frac{r}{2}}$ follows from \Cref{prop:matching}, so the difficulty lies in proving the upper bound.

Another direction is to look at the more general quantity $\mult(A_D(t),\alpha)$, which is defined to be the multiplicity of $\alpha$ as a root of $A_D(t)$.  For example, it is not difficult to see that $\mult(A_D(t),0)$ is equal to the minimum number of arcs that one must remove in $D$ to obtain an acyclic digraph.  Such a set of arcs is known as a \textit{minimum feedback arc set}, and determining the size of such a set is well known to an NP-Complete problem \cite{karp2010reducibility}.

This connection to feedback arc sets, together with the result of this paper, establishes a number of results for $\mult(A_D(t),\alpha)$ when $\alpha\in \{0,-1\}$, and it is natural to ask what can be said about other integral values of $\alpha$.  An immediate obstacle to this is the following.
\begin{question}
    Does there exist a digraph $D$ such that $A_D(t)$ has an integral root which is not equal to either $0$ or $-1$?
\end{question}
We have verified that no such digraph exists on at most 5 vertices. 
We also note that there exist digraphs with real roots of magnitude larger than $2$, so the obstruction to finding these integral roots is not that their magnitudes are too large.

\subsection*{Acknowledgement} This work began as part of the Graduate Student Combinatorics Conference 2022, which was funded through NSF Grant DMS-1933360, UC San Diego, and the Combinatorics Foundation. 

    \bibliographystyle{amsalpha}
\bibliography{refs}

\end{document}